\newcommand{\Z}{\ensuremath{\mathbb{Z}}\xspace}
\newcommand{\Q}{\ensuremath{\mathbb{Q}}\xspace}
\newcommand{\R}{\ensuremath{\mathbb{R}}\xspace}
\newcommand{\C}{\ensuremath{\mathbb{C}}\xspace}
\newcommand{\A}{\ensuremath{\mathbb{A}}\xspace}
\newcommand{\Qp}{\ensuremath{\mathbb{Q}_{p}}\xspace}
\newcommand{\Zp}{\ensuremath{\mathbb{Z}_{p}}\xspace}
\newcommand{\Fp}{\ensuremath{\mathbb{F}_{p}}\xspace}
\newcommand{\Sp}{\mathrm{Sp}\xspace}
\newcommand{\D}{\mathcal{D}}
\newcommand{\OO}{\ensuremath{\mathcal{O}}\xspace}
\newcommand{\Frob}{\mathrm{Frob}\xspace}
\newcommand{\comment}[1]{}
\DeclareMathOperator{\Gal}{Gal}
\DeclareMathOperator{\End}{End}
\DeclareMathOperator{\Hom}{Hom}
\DeclareMathOperator{\Spec}{Spec}
\DeclareMathOperator{\Spa}{Spa}
\DeclareMathOperator{\Ker}{Ker}
\DeclareMathOperator{\Max}{Max}
\DeclareMathOperator{\Ann}{Ann}
\newcommand{\Supp}{\ensuremath{\mathrm{Supp}}\xspace}
\newcommand{\T}{\ensuremath{\mathbb{T}}\xspace}
\newcommand{\GL}{\ensuremath{\mathrm{GL}}\xspace}
\newcommand{\cX}{\ensuremath{\mathscr{X}}\xspace}
\newcommand{\mbf}{\mathbf}
\newcommand{\mb}{\mathbb}
\newcommand{\mc}{\mathcal}
\newcommand{\ms}{\mathscr}
\newcommand{\mf}{\mathfrak}
\newcommand{\vp}{\varpi}
\newcommand{\ra}{\rightarrow}
\newcommand{\sub}{\subseteq}
\newcommand{\oo}{\mathcal{O}}
\newcommand{\ol}{\overline}
\newcommand{\bu}{\bullet}
\newcommand{\ka}{\kappa}
\newcommand{\wh}{\widehat}
\newcommand{\wt}{\widetilde}
\newcommand{\ok}{\mathcal{O}_{K}}
\newtheorem{theorem}{Theorem}[subsection]
\newtheorem{proposition}[theorem]{Proposition}
\newtheorem{corollary}[theorem]{Corollary}
\newtheorem{lemma}[theorem]{Lemma}
\newtheorem*{theor}{Theorem}
\theoremstyle{definition}
\newtheorem{definition}[theorem]{Definition}
\newtheorem{remark}[theorem]{Remark}
\mathchardef\mhyphen="2D
\title{Irreducible components of extended eigenvarieties and interpolating 
Langlands functoriality}
\author{Christian Johansson and James Newton}
\address{DPMMS, Centre for Mathematical Sciences, Wilberforce Road, Cambridge CB3 0WB, UK}
\email{hcj24@cam.ac.uk}
\address{Department of Mathematics, King's College London, London WC2R 2LS, UK}
\email{j.newton@kcl.ac.uk}
\newcommand{\rk}{\mathrm{rk}\xspace}
\newcommand{\Qbar}{\overline{\mathbb{Q}}}
\begin{document}
\begin{abstract}
We study the basic geometry of a class of analytic adic spaces that arise in 
the study of the extended (or adic) eigenvarieties constructed by 
Andreatta--Iovita--Pilloni, Gulotta and the authors. We apply this to prove a general 
interpolation theorem for Langlands functoriality, which works for extended 
eigenvarieties and improves upon existing results in characteristic $0$. As an 
application, we show that the characteristic $p$ locus of the extended 
eigenvariety for ${\rm GL}_{2}/F$, where $F/\Q$ is a cyclic extension, contains 
non-ordinary components of dimension at least $[F:\Q]$ .
\end{abstract}
\maketitle

\counterwithin{equation}{subsection}
\section{Introduction}
\subsection{Previous work}
In a previous work \cite{jn}, we have described a general construction of 
`extended eigenvarieties': analytic adic spaces (of mixed characteristic), 
which contain the rigid analytic eigenvarieties constructed by Hansen 
\cite{han} as an open subspace. This follows work of Andreatta, Iovita and 
Pilloni \cite{aip}, who gave a (different) construction of an extended 
eigencurve, and there is also independent work of Gulotta \cite{gu} which constructs extended versions of Urban's eigenvarieties \cite{urb}. These newly constructed objects appear to be natural spaces in 
which to consider families of finite slope automorphic representations. 
Moreover, they provide a new perspective on the geometry of rigid 
analytic eigenvarieties. We refer to \cite{aip,jn} for more remarks and 
questions related to these extended eigenvarieties.

\medskip

One basic question about extended eigenvarieties is: do they contain rigid 
analytic eigenvarieties as a \emph{proper subset}? Results of Bergdall--Pollack 
\cite{bp} and Liu--Wan--Xiao \cite{lwx} show that the extended eigencurve (and 
an analogue 
constructed using definite quaternion algebras over $\Q$) does indeed contain 
the Coleman--Mazur eigencurve as a proper subset. More precisely, they show that the extended eigencurve contains infinitely many non-ordinary points in characteristic  $p$. One motivation for this 
article is to bootstrap this result, using a new result on $p$-adic interpolation of 
Langlands functoriality, to show that other extended eigenvarieties contain 
their rigid analytic counterpart as a proper subset, and give lower bounds on the dimension of the characteristic $p$ locus. 

\subsection{Contents of this article}
In this article, we begin by studying the types of analytic adic spaces that 
appear in the construction of extended eigenvarieties. We tentatively call 
them \emph{pseudorigid spaces}, since they generalise rigid spaces (over 
complete discretely valued fields) and have the same key features. Our main 
focus is to establish some basic results about the geometry of pseudorigid 
spaces, including the existence of a Zariski topology, normalizations, 
irreducible components and a well behaved dimension theory. We refer to 
\cite{con} for these notions in the setting of rigid analytic spaces, and our 
exposition is heavily influenced by this reference.

\medskip

We then define a very general and elementary version of an abstract `eigenvariety datum', which is more 
flexible than existing notions in the literature. Using it we prove  an 
interpolation theorem (Thm.~\ref{optimalint}) which has the following somewhat imprecise form: 
\begin{theor}
Suppose we have two extended eigenvarieties $\ms{X}, \ms{Y}$, a homomorphism 
between appropriate abstract Hecke algebras $\sigma: \T_{\ms{Y}} \rightarrow 
\T_{\ms{X}}$ and a collection of `classical points' $\ms{X}^{cl}$ whose 
associated systems of Hecke eigenvalues, when composed with $\sigma$, also 
appear in $\ms{Y}$. Then the associated map $\ms{X}^{cl} \rightarrow \ms{Y}$ 
interpolates into a canonical morphism from the Zariski closure of 
$\ms{X}^{cl}$ in $\ms{X}$ to $\ms{Y}$.  
\end{theor}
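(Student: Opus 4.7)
The plan is to use the universal property implicit in the abstract eigenvariety datum for $\mathscr{Y}$: a morphism $Z \to \mathscr{Y}$ from a pseudorigid space $Z$ should correspond to a map $Z \to \mathscr{W}_{\mathscr{Y}}$ together with a coherent $\mathcal{O}_Z$-module carrying a $\mathbb{T}_{\mathscr{Y}}$-action which is compatible, on a suitable affinoid cover of $\mathscr{W}_{\mathscr{Y}}$, with the Fredholm hypersurface of the compact operator defining $\mathscr{Y}$. Thus the task is to construct this data on the Zariski closure $\mathscr{Z}$ of $\mathscr{X}^{cl}$ in $\mathscr{X}$, using the Zariski topology, reducedness and irreducible-component theory for pseudorigid spaces developed earlier in the paper.

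First I would endow $\mathscr{Z}$ with its reduced closed subspace structure. The coherent sheaf underlying the eigenvariety datum for $\mathscr{X}$ restricts to a coherent sheaf $\mathscr{M}_{\mathscr{Z}}$ on $\mathscr{Z}$, carrying a $\mathbb{T}_{\mathscr{X}}$-action and hence, via $\sigma$, a $\mathbb{T}_{\mathscr{Y}}$-action; the hypothesis that $\sigma$ is part of a morphism of eigenvariety data should also furnish a compatible map $\mathscr{Z} \to \mathscr{W}_{\mathscr{Y}}$. The key point is then to verify that this pair is a finite-slope eigendatum for $\mathscr{Y}$: after covering $\mathscr{W}_{\mathscr{Y}}$ by affinoid opens over which $\mathscr{Y}$ is cut out by the characteristic series of a compact Hecke operator, the required compatibility amounts to the vanishing on $\mathscr{Z}$ of this characteristic series evaluated at the relevant $\mathcal{O}_{\mathscr{Z}}$-valued Hecke eigenvalue. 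This is a closed condition, defined by an ideal in $\mathcal{O}_{\mathscr{Z}}$, and by hypothesis it holds at every point of $\mathscr{X}^{cl}$. Since $\mathscr{Z}$ is reduced and $\mathscr{X}^{cl}$ is Zariski dense in it, the ideal must be zero, so the relation holds globally.

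The principal obstacle I anticipate is the bookkeeping of slope decompositions across the affinoid cover of $\mathscr{W}_{\mathscr{Y}}$: one must verify that the local morphisms $\mathscr{Z}|_U \to \mathscr{Y}|_U$ produced on each piece glue to a single global morphism, and that the construction is independent of the choice of slope decomposition. Uniqueness on overlaps should follow from separatedness of $\mathscr{Y} \to \mathscr{W}_{\mathscr{Y}}$ together with reducedness of $\mathscr{Z}$ and Zariski density of $\mathscr{X}^{cl}$ in $\mathscr{Z}$, so that two morphisms agreeing on $\mathscr{X}^{cl}$ agree everywhere; canonicity then follows from this uniqueness. The whole argument relies essentially on the pseudorigid analogues of the standard rigid analytic techniques—the Zariski topology, irreducible components, and reduced structure on closed subspaces—that the first part of the paper establishes, which is what allows the interpolation result to extend from rigid to extended eigenvarieties and thereby improves upon the existing characteristic-$0$ results.
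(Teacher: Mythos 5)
Your proposal takes a genuinely different route from the paper, and as written it does not go through. You are following the strategy of the earlier interpolation theorems (Chenevier, Bella\"iche--Chenevier, Hansen): cover the target weight space by pieces on which $\ms{Y}$ is a finite cover of a Fredholm hypersurface, show by density of $\ms{X}^{cl}$ in the reduced Zariski closure $\ms{Z}$ that the appropriate Fredholm relation holds on $\ms{Z}$, and try to lift the resulting map into the hypersurface to a map into $\ms{Y}$. The paper's proof does something structurally different: after reducing (via base change and replacing $\psi_1$ by $\psi_1\circ\sigma$) to a common Hecke algebra $\mb{T}$ over a common base $\ms{Z}_1 = \ms{Z}_2 = \ms{Z}$, it forms the auxiliary eigenvariety datum $\mf{O}_3 = (\ms{Z}, \ms{H}_1 \oplus \ms{H}_2, \mb{T}, (\psi_1,\psi_2))$. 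Its eigenvariety $\ms{X}_3$ contains both $\ms{X}_1$ and $\ms{X}_2$ as Zariski closed subspaces (because $\pi_{3,\ast}\oo_{\ms{X}_3}$ has the $\pi_{i,\ast}\oo_{\ms{X}_i}$ as quotients), and the hypothesis on $\ms{X}^{cl}$, read through Corollary~\ref{points}, says precisely that $\ms{X}^{cl} \subset \ms{X}_2$ inside $\ms{X}_3$. The Zariski closure $\ol{\ms{X}}$ therefore lands in $\ms{X}_2$ for free, and that inclusion is the morphism.

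There are two concrete gaps in your plan. First, you appeal to a universal description of $\ms{Y}$ that is not established anywhere and that the abstract eigenvariety data of this paper do not possess: a datum $(\ms{Z},\ms{H},\mb{T},\psi)$ contains no compact operator and no Fredholm series, and $\ms{Y}$ is \emph{defined} as a finite cover of $\ms{Z}_2$, not characterized by a moduli property, so "checking the conditions of the datum" is not by itself a construction of a morphism into $\ms{Y}$. Second, even in the concrete overconvergent-cohomology setting, knowing the characteristic series of the controlling operator vanishes at the relevant $\oo_{\ms{Z}}$-valued eigenvalue only gives a map to the Fredholm hypersurface. To lift this to $\ms{Y}$ you must show that the entire kernel of $\mb{T}_{\ms{Y}}\otimes\oo(Z^{\ms{Y}}_{U,h}) \twoheadrightarrow \oo(\ms{Y}_{U,h})$ dies in $\oo(\ms{Z}|_U)$, which is a stronger statement than a single Fredholm relation and requires density of $\ms{X}^{cl}$ to be usable affinoid-by-affinoid. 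That is exactly where the older arguments are forced to assume a very Zariski dense subset, a hypothesis the paper explicitly avoids by working globally (see Remark~\ref{rem:compareinterp}). The slope-decomposition bookkeeping you flag as a merely anticipated obstacle is thus the actual core difficulty, and the direct-sum trick in the paper's proof is precisely what dissolves it.
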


This result gives a substantial and essentially optimal generalisation of 
previous interpolation theorems which have
been used to study $p$-adic Langlands functoriality \cite[Prop.~3.5]{che2}, 
\cite[Prop.~7.2.8]{bc}, \cite[Thm.~5.1.6]{han} (see Remark 
\ref{rem:compareinterp} for more precise comments).

Somewhat surprisingly, both the 
statement and the proof of our theorem seem to be significantly simpler than existing 
results, and it seems to be very well-adapted to interpolating known cases of 
Langlands functoriality\footnote{More precisely (but still somewhat imprecisely), Langlands functoriality implies transfer of systems of Hecke eigenvalues, and our theorem and its predecessors allow for interpolation of the transfer of systems of Hecke eigenvalues. In general this information is coarser than the transfer of L-packets.}. In particular, our methods are rather different from those of Chenevier, Bellaiche--Chenevier and of Hansen, and ultimately rely on a simple trick that allows us to consider two eigenvarieties as Zariski closed subspaces inside a common, bigger, eigenvariety. After proving our interpolation theorem, we then briefly review our construction of extended 
eigenvarieties from \cite{jn}. We address, at the suggestion of a referee, a 
question not touched upon in \cite{jn} concerning the dependence of our 
construction on the choice of controlling operator, and we also generalise a 
result giving a lower bound for 
the dimensions of 
irreducible components of eigenvarieties \cite[Prop.~B.1]{han} to extended 
eigenvarieties (Prop.~\ref{prop:dimension}).

\medskip

Finally, as an example application, we apply the interpolation theorem to 
establish the existence of a base change map between the extended eigencurve and the 
extended eigenvarieties for $\GL_2$ over a cyclic extension $F/\Q$ 
(Thm.~\ref{padicBC}). Combining this with our result on the dimensions of 
irreducible components shows that the characteristic $p$ loci of
these extended eigenvarieties contain (non-ordinary) components of dimension at 
least $[F:\Q]$ (Cor.~\ref{cor:bigcharp}). In particular, they strictly contain 
their rigid analytic loci (where $p$ is invertible). We also outline how the 
same strategy would allow one to give generalisations of this result (Rem 
\ref{rem: extension}).

\medskip
To conclude this introduction, we remark that recent work of Louren\c{c}o \cite{lou} studies the geometry of pseudorigid spaces further, in particular extending results of Bartenwerfer and L\"utkebohmert on extending (bounded) functions on normal rigid spaces to normal pseudorigid spaces.

\subsection*{Acknowledgments}

The authors wish to thank David Hansen for useful conversations and for 
comments on 
earlier drafts of this paper, and Judith Ludwig for many helpful 
comments on several aspects of the paper. We also wish to thank the anonymous 
referee for useful comments, which prompted us to make simplifications in section \ref{sec interpolation} and to include the results of 
section \ref{independence}. C.J. was supported by NSF grant 
DMS-1128155 during the initial stage of this work.
\section{Irreducible components and normalization}

\subsection{The Zariski topology for locally Noetherian adic spaces}

For basics on adic spaces we refer to \cite{hu1, hu2, hu3}. Following now common terminology, we will say that an adic space $X$ is \emph{locally Noetherian} if it is locally of the form $\Spa(R,R^{+})$, where $R$ has a Noetherian ring of definition over which $R$ is finitely generated, or locally of the form $\Spa(S,S^{+})$, where $S$ is strongly Noetherian. To make things slightly easier, we will say that an affinoid ring $(A,A^{+})$ is Noetherian if A is strongly Noetherian, or admits a Noetherian ring of definition over which it is finitely generated. All affinoid rings in this paper will be assumed complete unless otherwise stated.

\medskip

Let us start with a general observation. If $(R,R^{+})$ is any affinoid ring, 
then there is a continuous map $\Spa(R,R^{+}) \ra \Spec(R)$ sending a valuation 
$v$ to its kernel (or support) $\Ker v$. This map is functorial in $(R,R^{+})$. 
One may define the \emph{Zariski topology} on $\Spa(R,R^{+})$ to be the 
topology whose open sets are the preimages of the open sets in $\Spec(R)$. It 
is not obvious to us how one would extend this construction to arbitrary adic 
spaces, since one cannot simply `glue' these topologies (if $\Spa(S,S^{+}) \sub 
\Spa(R,R^{+})$ is a rational subset, then the inclusion is continuous for the 
Zariski topology, but there is no reason that it should be an open embedding). 
Our goal in this section is to give a definition of the Zariski topology for 
arbitrary \emph{locally Noetherian} adic spaces which recovers the definition 
above in the affinoid case, and show that this gives a natural notion of 
irreducibility and irreducible components.

\medskip

Let $X$ be a locally Noetherian adic space. By \cite[Theorem 2.5]{hu2}, $X$ has a good theory of coherent $\oo_{X}$-modules. If $X=\Spa(R,R^{+})$ is affinoid with $(R,R^{+})$ Noetherian, then there is an equivalence of abelian categories between finitely generated $R$-modules and coherent $\oo_{X}$-modules, given by sending a coherent $\oo_{X}$-module $\mc{F}$ to its global sections $\mc{F}(X)$, and sending a finitely generated $R$-module $M$ to the sheaf $\wt{M}$ defined by
$$ \wt{M}(U)= M \otimes_{R} \oo_{X}(U)$$
whenever $U\sub X$ is a rational subset. We say that a coherent 
$\oo_{X}$-module $\mc{I}$ is a \emph{coherent $\oo_{X}$-ideal} (or 
\emph{coherent ideal} for short, if no confusion seems likely to arise) if it 
is a sub-$\oo_{X}$-module of $\oo_{X}$. By the construction in 
\cite[(1.4.1)]{hu3}, any coherent ideal $\mc{I}$ gives rise to a \emph{closed 
adic subset} \[V(\mc{I}) = \{x \in X: \mc{I}_x \neq \OO_{X,x}\}\] of $X$ (in 
particular, this is a closed subset of $X$). Locally, if $X=\Spa(R,R^{+})$ with 
$(R,R^{+})$ Noetherian and $\mc{I}=\wt{I}$, we have 
$V(\mc{I})=\Spa(R/I,(R/I)^{+})$, where $(R/I)^{+}$ is defined to be the 
integral closure of $R^{+}/(I \cap R^{+})$ in $R/I$. 

\medskip

Let us return to the case of general locally Noetherian $X$. We wish to check that the closed adic subsets of $X$ form the closed subsets of a topology. To do this, let us define some operations on coherent ideals. If $(\mc{I}_{j})_{j\in J}$ is a collection of coherent ideals, then we define $\sum_{j\in J}\mc{I}_{j}$ to be the sheaf associated with the presheaf
$$ U \mapsto \sum_{j\in J}\mc{I}_{j}(U)\sub \oo_{X}(U). $$
It is a subsheaf of $\oo_{X}$ by construction. If $\mc{I}_{1}$ and $\mc{I}_{2}$ are two coherent ideals, we define their intersection $\mc{I}_{1}\cap \mc{I}_{2}$ to be the sheaf $U \mapsto \mc{I}_{1}(U)\cap \mc{I}_{2}(U)$. Note that these constructions commute with restriction to open subsets. We record the following elementary lemma.

\begin{lemma}\label{zar1}
Let $(R,R^{+})$ be Noetherian. If $(I_{j})_{j\in J}$ is a collection of ideals of $R$ and $I=\sum_{j\in J}I_{j}$, then $\wt{I}=\sum_{j\in J}\wt{I}_{j}$. If $I_{1}$ and $I_{2}$ are two ideals of $R$, then $\wt{I_{1}\cap I_{2}} = \wt{I}_{1}\cap \wt{I}_{2}$.
\end{lemma}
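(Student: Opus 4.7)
The plan is to leverage the equivalence of categories between finitely generated $R$-modules and coherent $\oo_X$-modules recalled in the excerpt: it is in particular an equivalence of abelian categories, so the functor $M\mapsto \wt{M}$ is exact, and for any rational subset $U\sub X$ it induces short exact sequences on $U$-sections (by the acyclicity of coherent sheaves on Noetherian affinoid adic spaces, which is part of \cite[Theorem 2.5]{hu2}).

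For the sum statement, I first use Noetherianity of $R$: since $I=\sum_{j\in J}I_j$ is finitely generated, there is a finite subset $\{j_1,\ldots,j_n\}\sub J$ with $I=I_{j_1}+\cdots+I_{j_n}$. The surjection $I_{j_1}\oplus\cdots\oplus I_{j_n}\twoheadrightarrow I$ of finitely generated $R$-modules yields, after applying $\wt{}$, a map of coherent sheaves $\wt{I}_{j_1}\oplus\cdots\oplus\wt{I}_{j_n}\to\wt{I}$ that is surjective on sections over every rational open $U$. Hence $\wt{I}_{j_1}(U)+\cdots+\wt{I}_{j_n}(U)=\wt{I}(U)$ for all such $U$, so the presheaf sum is already a sheaf and coincides with $\wt{I}$. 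Combining this with the obvious inclusion $\sum_{j\in J}\wt{I}_j\sub \wt{I}$ (coming from the inclusions $I_j\hookrightarrow I$ and exactness of $\wt{}$), the equality $\sum_{j\in J}\wt{I}_j=\wt{I}$ follows.

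For the intersection statement, I would apply $\wt{}$ to the standard exact sequence of finitely generated $R$-modules
$$ 0 \ra I_1\cap I_2 \ra I_1\oplus I_2 \xrightarrow{(a,b)\mapsto a-b} R, $$
in which the kernel on the right is the diagonal copy of $I_1\cap I_2$. Exactness of $\wt{}$ (with acyclicity ensuring surjectivity on sections where needed) gives
$$ 0 \ra \wt{I_1\cap I_2} \ra \wt{I}_1\oplus \wt{I}_2 \xrightarrow{(a,b)\mapsto a-b} \oo_X, $$
realising $\wt{I_1\cap I_2}$ as the kernel of the difference map. But viewing $\wt{I}_1$ and $\wt{I}_2$ as subsheaves of $\oo_X$, this kernel is visibly $\wt{I}_1\cap \wt{I}_2$, yielding the claimed identification.

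The only real subtlety is pinning down that $\wt{}$ preserves exactness on sections over rational opens rather than merely as a functor of sheaves; as indicated above, this is precisely the acyclicity statement for coherent sheaves on Noetherian affinoid adic spaces, and once it is invoked both parts of the lemma reduce to purely formal manipulations in the abelian category of finitely generated $R$-modules.
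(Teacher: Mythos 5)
Your proof is correct and ultimately rests on the same technical ingredient as the paper's, namely flatness of $R \to \oo_X(U)$ for rational $U$ (which you package as exactness of $\wt{(\cdot)}$ and acyclicity on sections). There are two small cosmetic differences worth noting: you first reduce the sum to a finite sum via Noetherianity of $R$ before applying the exact sequence argument, whereas the paper handles the possibly infinite index set directly by computing the image of $\bigoplus_j I_j \otimes_R \oo_X(U) \to \oo_X(U)$; and for the intersection statement (which the paper dismisses as straightforward) you supply the standard argument by applying $\wt{(\cdot)}$ to the left-exact sequence $0 \to I_1\cap I_2 \to I_1\oplus I_2 \to R$, which is a clean way to make the hand-wave explicit. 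Both versions are fine; yours is slightly more verbose but arguably more self-contained.
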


\begin{proof}
The second statement is straightforward to deduce from the definitions, so we 
content ourselves with proving the first statement. By the definitions, 
$\wt{I}$ is the sheafification of a presheaf which on rational subsets $U$ is 
given by
$$ U \mapsto \sum_{j\in J} (I_{j} \otimes_{R} \oo_{X}(U)). $$
By flatness of $R \ra \oo_{X}(U)$, we have $I_{j} \otimes_{R} \oo_{X}(U) = I_{j}\oo_{X}(U)$ for all $j\in J$, so it suffices to prove that $\sum_{j\in J} I_{j}\oo_{X}(U) = I\oo_{X}(U)$ (since the latter is equal to $I\otimes_{R} \oo_{X}(U)$, again by flatness). By definition $I$ is the image of the the natural map $\bigoplus_{j\in J}I_{j} \ra R$, and $\sum_{j\in J} I_{j}\oo_{X}(U)$ is the image of the same map after applying $-\otimes_{R} \oo_{X}(U)$. The statement now follows from flatness of $R \ra \oo_{X}(U)$. 
\end{proof}

Let us now prove that when $X=\Spa(R,R^{+})$ with $(R,R^{+})$ Noetherian, the Zariski closed subsets of $\Spa(R,R^{+})$ (according to our general definition) are exactly the closed adic subsets.

\begin{proposition}\label{zar2}
Let $(R,R^{+})$ be Noetherian and put $X=\Spa(R,R^{+})$. Let $I$ be an ideal of $R$. Then $V(\wt{I})$ is the preimage of the closed subset of $\Spec(R)$ corresponding to $I$. 
\end{proposition}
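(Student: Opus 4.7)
The plan is to use the identification of the support map with the natural map on local rings and to reduce the statement to the fact that the localization of $R$ at a point of $\Spa(R,R^+)$ is a local ring whose maximal ideal pulls back to the support of the point.

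First I would unwind what $V(\widetilde{I})$ means at a point $x \in X$ with support $\mathfrak{p} = \Ker v_{x}$. By the construction of $\widetilde{I}$ and the fact that filtered colimits commute with tensor products, the stalk $\widetilde{I}_{x}$ is the image of the natural map $I \otimes_{R} \oo_{X,x} \to \oo_{X,x}$, i.e.\ $I\oo_{X,x}$. Hence $x \in V(\widetilde{I})$ if and only if $I\oo_{X,x} \neq \oo_{X,x}$.

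Next I would invoke the standard fact that $\oo_{X,x}$ is a local ring whose residue field embeds into the Huber residue field $k(x)$, and that the composition $R \to \oo_{X,x} \to k(x)$ has kernel exactly $\mathfrak{p} = \Ker v_{x}$. From this it follows immediately that an element $f \in R$ lies in the maximal ideal $\mathfrak{m}_{x} \subset \oo_{X,x}$ if and only if $f \in \mathfrak{p}$. Consequently $I\oo_{X,x} \subseteq \mathfrak{m}_{x}$ precisely when every generator (equivalently every element) of $I$ lies in $\mathfrak{p}$, i.e.\ when $I \subseteq \mathfrak{p}$. Since $\oo_{X,x}$ is local, $I\oo_{X,x} \neq \oo_{X,x}$ is the same as $I\oo_{X,x} \subseteq \mathfrak{m}_{x}$.

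Putting this together, $x \in V(\widetilde{I})$ if and only if $I \subseteq \Ker v_{x}$, which is exactly the condition that $\Ker v_{x}$ lies in $V(I) \subset \Spec(R)$. Therefore $V(\widetilde{I})$ is precisely the preimage of $V(I)$ under the support map, as claimed.

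The only non-formal step is identifying the kernel of $R \to \oo_{X,x}/\mathfrak{m}_{x}$ with $\mathfrak{p}$; this is where one actually uses Huber's construction of the structure sheaf and the valuation at a point, but the result is well documented in \cite{hu2} and should not require more than a citation. The rest is a formal manipulation of flat base change and local rings.
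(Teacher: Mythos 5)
Your proof is correct, but it takes a genuinely different route from the paper's. The paper argues globally: it has already observed that $V(\widetilde I) = \Spa(R/I,(R/I)^+)$ as a closed adic subspace, and then notes that a continuous valuation $v$ on $R$ descends to one on $R/I$ if and only if $I \subseteq \Ker v$; the observation that ideals of $R$ are closed and $R/I$ carries the quotient topology is precisely what makes this "descent" of valuations unproblematic. You instead work entirely at the level of stalks, using the definition $V(\mc I) = \{x : \mc I_x \ne \oo_{X,x}\}$ directly: flatness of rational localization gives $\widetilde I_x = I\,\oo_{X,x}$, and the fact that the composite $R \to \oo_{X,x} \to k(x)$ has kernel $\Ker v_x$ converts the non-unit condition into $I \subseteq \Ker v_x$. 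Your approach sidesteps the topological subtleties about quotient rings and closed ideals (though these still implicitly underlie the flatness of $R\to\oo_X(U)$), while the paper's is shorter given its prior identification of $V(\widetilde I)$ with an honest $\Spa$. Both are sound; yours is a clean local alternative.
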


\begin{proof}
Since all ideals of $R$ are closed and $R/I$ carries the quotient topology coming from $R$, this is essentially trivial; any $v\in X$ belongs to $\Spa(R/I,(R/I)^{+})$ if and only if $I\sub \Ker v$. 
\end{proof}

\begin{corollary}\label{zar3}
Let $X$ be a locally Noetherian adic space.
\begin{enumerate}
\item $V(0)=X$, and $V(\oo_{X})=\emptyset$.

\item If $(\mc{I}_{j})_{j\in J}$ is a collection of coherent ideals, then $\mc{I}=\sum_{j\in J}\mc{I}_{j}$ is a coherent ideal, and $V(\mc{I}) = \bigcap_{j\in J} V(\mc{I}_{j})$.

\item If $\mc{I}_{1}$ and $\mc{I}_{2}$ are two coherent ideals, then $\mc{I}_{1}\cap \mc{I}_{2}$ is a coherent ideal and $V(\mc{I}_{1} \cap \mc{I}_{2}) = V(\mc{I}_{1}) \cup V(\mc{I}_{2})$.
\end{enumerate}
\end{corollary}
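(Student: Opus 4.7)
The plan is to reduce everything to the affinoid Noetherian case and then to classical statements about the Zariski topology on $\Spec(R)$, using Lemma~\ref{zar1} and Proposition~\ref{zar2}.

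Part (1) is immediate from the definition: for every point $x \in X$ the stalk $\OO_{X,x}$ is nonzero, so $V(0) = X$, while $V(\OO_X) = \emptyset$ tautologically.

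For parts (2) and (3), since the formation of $V(\mc{I})$, of $\mc{I}_1 \cap \mc{I}_2$, and of $\sum_{j \in J}\mc{I}_j$ all commute with restriction to an open subset, I may check the assertions locally, and so assume $X = \Spa(R,R^+)$ with $(R,R^+)$ Noetherian. In that case the equivalence of categories between finitely generated $R$-modules and coherent $\OO_X$-modules identifies coherent ideals of $\OO_X$ with ideals of $R$; write $\mc{I}_j = \widetilde{I_j}$ and $\mc{I}_k = \widetilde{I_k}$ accordingly. The first point to address is that $\sum_{j\in J} \mc{I}_j$ is coherent for an arbitrary index set $J$: by Noetherianness of $R$, the sum $I := \sum_{j\in J} I_j$ is finitely generated, and in fact equals $\sum_{j \in J_0} I_j$ for some finite subset $J_0 \subseteq J$, so $\widetilde{I}$ is coherent; by Lemma~\ref{zar1} we have $\widetilde{I} = \sum_{j\in J}\widetilde{I_j}$ as sheaves. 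Similarly $I_1 \cap I_2$ is a finitely generated ideal of $R$ and Lemma~\ref{zar1} gives $\widetilde{I_1 \cap I_2} = \widetilde{I_1} \cap \widetilde{I_2}$, so the intersection is coherent.

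Having identified the coherent ideals $\sum_{j\in J}\mc{I}_j$ and $\mc{I}_1 \cap \mc{I}_2$ with the ideals $\sum_{j\in J}I_j$ and $I_1 \cap I_2$ of $R$, Proposition~\ref{zar2} identifies their vanishing loci in $X$ with the preimages (under the support map $\Spa(R,R^+) \to \Spec(R)$) of the vanishing loci of the corresponding ideals in $\Spec(R)$. The desired identities $V(\sum_{j\in J} I_j) = \bigcap_{j \in J} V(I_j)$ and $V(I_1 \cap I_2) = V(I_1) \cup V(I_2)$ are standard properties of the Zariski topology on $\Spec(R)$, and pulling them back through the support map gives (2) and (3).

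The only genuinely non-routine point is the coherence of an arbitrary sum $\sum_{j\in J}\mc{I}_j$, which is why the argument relies crucially on the local Noetherian hypothesis; everything else is formal once Lemma~\ref{zar1} and Proposition~\ref{zar2} are in hand.
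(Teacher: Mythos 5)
Your proof is correct and follows exactly the same strategy as the paper's: reduce to the affinoid Noetherian case, identify the sheaf-level operations with the corresponding ideal operations via Lemma~\ref{zar1}, and conclude via Proposition~\ref{zar2} and the standard properties of $V(\cdot)$ on $\Spec(R)$. The paper's proof is a two-sentence sketch; you have simply spelled out the same steps (in particular the observation that Noetherianness of $R$ gives coherence of the arbitrary sum).
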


\begin{proof}
It is enough to prove these statements locally, so we may assume that $X=\Spa(R,R^{+})$ with $(R,R^{+})$ Noetherian. The Corollary then follows directly from Lemma \ref{zar1} and Proposition \ref{zar2}.
\end{proof}

It now makes sense to make the following definition.

\begin{definition}
Let $X$ be a locally Noetherian adic space. We define the \emph{Zariski topology} of $X$ to be the topology on $X$ whose closed sets are the closed adic subsets of $X$.
\end{definition}

In light of this definition we will refer to closed adic subsets of $X$ as 
\emph{Zariski closed subsets of $X$}. Next, we will discuss more general 
coherent sheaves. Let $X$ be a locally Noetherian adic space and let $\mc{F}$ 
be a coherent $\oo_{X}$-module. We define its support $\Supp(\mc{F}$) by $\{ x\in 
X \mid \mc{F}_{x}\neq 0\}$. Note that $V(\mc{I}) = \Supp(\OO_X/\mc{I})$. 

\medskip

We now show that $\Supp(\mc{F})$ is closed (in 
the usual topology). It is enough to verify this locally so assume 
$X=\Spa(R,R^{+})$ with $(R,R^{+})$ Noetherian and $\mc{F}=\wt{M}$. Suppose $x 
\notin \Supp(\mc{F})$. We have $\wt{M}_{x}=\varinjlim_{x\in 
U}M\otimes_{R}\oo_{X}(U)=0$ where $U$ is rational. If $m_{1},..,m_{r}$ are 
generators of $M$, we see that there is a $U$ such that the $m_{i}$ vanish in 
$M\otimes_{R}\oo_{X}(U)$. It follows that $\mc{F}(U)=M\otimes_{R}\oo_{X}(U)=0$, 
and we deduce that $\Supp(\mc{F})$ is closed. Our next goal is to show that it 
is 
Zariski closed. We define the \emph{annihilator} $\Ann(\mc{F})$ of $\mc{F}$ by
$$ (\Ann(\mc{F}))(U) = \{ f\in \oo_{X}(U) \mid f\mc{F}_{x}=0\,\, \forall x\in U\}. $$
This is an $\oo_{X}$-subsheaf of $\oo_{X}$, and (since $\mc{F}$ is coherent) we 
have that 
$\Ann(\mc{F})_{x}=\Ann(\mc{F}_{x})$, where the right hand side is the 
annihilator of the $\oo_{X,x}$-module $\mc{F}_{x}$.

\begin{lemma}\label{coh1}
Let $(R,R^{+})$ be Noetherian and let $M$ be a finitely generated $R$-module. Then $\Ann(\wt{M})=\wt{\Ann(M)}$.
\end{lemma}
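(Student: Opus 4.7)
The plan is to verify the equality stalk by stalk, using the fact that two sub-$\oo_X$-modules of a fixed $\oo_X$-module which agree on all stalks must coincide.

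At an arbitrary $x \in X$, the stalk $\oo_{X,x}$ is a filtered colimit of the flat $R$-algebras $\oo_X(U)$ over rational $U$ containing $x$, so $\oo_{X,x}$ is itself flat over $R$. From the definition $\wt{\Ann(M)}(U) = \Ann(M) \otimes_R \oo_X(U)$ on rational $U$, together with flatness, one obtains $\wt{\Ann(M)}_x = \Ann(M) \otimes_R \oo_{X,x}$ as a sub-$\oo_{X,x}$-module of $\oo_{X,x}$. On the other hand, the identity $\Ann(\mc{F})_x = \Ann(\mc{F}_x)$ recorded immediately before the lemma gives $\Ann(\wt{M})_x = \Ann_{\oo_{X,x}}(M \otimes_R \oo_{X,x})$.

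The heart of the proof is then the following standard flat base change statement for annihilators: if $S$ is a flat $R$-algebra and $M$ is a finitely presented $R$-module, then $\Ann_R(M) \otimes_R S = \Ann_S(M \otimes_R S)$ as submodules of $S$. This can be deduced by writing $\Ann_R(M)$ as the kernel of the natural map $R \to \End_R(M)$, applying the flat base change isomorphism $\End_R(M) \otimes_R S \cong \End_S(M \otimes_R S)$ (valid precisely because $M$ is finitely presented), and invoking left exactness of $- \otimes_R S$. In our situation $R$ is Noetherian, so the finitely generated module $M$ is automatically finitely presented, and $S = \oo_{X,x}$ is flat over $R$; applying this principle delivers equality of the two stalks, and hence of the two subsheaves of $\oo_X$.

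I do not anticipate any substantive obstacle: the lemma is essentially the sheaf-theoretic incarnation of flat base change for annihilators of finitely presented modules, and every necessary hypothesis (Noetherianness of $R$, flatness of rational localizations) is built into the setup.
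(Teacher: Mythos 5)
Your proof is correct and hinges on the same key fact as the paper's: commutativity of annihilator formation with flat base change for finitely presented modules. The paper organizes the argument slightly differently — it proves the inclusion $\Ann(M)\oo_X(U)\subseteq\Ann(\wt M)(U)$ directly via flat base change, and then for the reverse inclusion passes through a cover of $U$ by rational subsets $U_x$ (rather than taking stalks as you do) and uses the sheaf property of $\wt{\Ann(M)}$ to glue — but these are two packagings of the same idea, and your stalk-by-stalk version is a clean alternative.
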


\begin{proof}
Recall that the formation of annihilators of finitely generated modules commute 
with flat base change \cite[Tag 07T8]{stacks-project}. Let $U$ be a rational 
subset of $X=\Spa(R,R^{+})$. First note that 
$\Ann(M)\oo_{X}(U)=\Ann(M\otimes_{R}\oo_{X}(U))\sub \Ann(\wt{M})(U)$, where we 
have used the above fact for the first equality. We need to prove the opposite 
inclusion. Let $f\in \Ann(\wt{M})(U)$ and let $x\in U$. $f$ kills 
$\wt{M}_{x}=\varinjlim_{x\in V\sub U} M\otimes_{R}\oo_{X}(V)$, and hence (by 
finite generation) has to kill $M\otimes_{R}\oo_{X}(U_{x})$ for some rational 
subset $U_{x}\sub U$ containing $x$. It follows that we may find an open cover 
$U=\bigcup_{x\in U}U_{x}$ of rational subsets such that $f\in 
\Ann(M\otimes_{R}\oo_{X}(U_{x}))=\Ann(M)\oo_{X}(U_{x})$. This implies that 
$f\in \Ann(M)\oo_{X}(U)$, as desired.
\end{proof}

\begin{corollary}\label{coh2}
Let $X$ be a locally Noetherian adic space and $\mc{F}$ a coherent 
$\oo_{X}$-module. Then $\Ann(\mc{F})$ is coherent $\oo_{X}$-ideal and 
$\Supp(\mc{F})=V(\Ann(\mc{F}))$, so $\Supp(\mc{F})$ is Zariski closed. If $f : 
X^{\prime} \ra X$ is a morphism of locally Noetherian adic spaces and $Z\sub X$ 
is Zariski closed, then $f^{-1}(Z)\sub X^{\prime}$ is Zariski closed. If $f$ is 
finite and $Z^{\prime}\sub X^{\prime}$ is Zariski closed, then 
$f(Z^{\prime})\sub X$ is Zariski closed.
\end{corollary}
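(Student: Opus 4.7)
The plan is to prove the four assertions in sequence, reducing each to the affinoid Noetherian case, where Lemma \ref{coh1} and Proposition \ref{zar2} make things concrete.

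For coherence of $\Ann(\mc{F})$ and the support identity, I would work locally: assume $X = \Spa(R,R^{+})$ with $(R,R^{+})$ Noetherian and $\mc{F} = \wt{M}$. Lemma \ref{coh1} gives $\Ann(\mc{F}) = \wt{\Ann_{R}(M)}$, which is coherent as the sheaf attached to an ideal. For the support, one argues at the level of stalks: $x \in \Supp(\mc{F})$ iff $\mc{F}_{x} \neq 0$ iff $\Ann_{\oo_{X,x}}(\mc{F}_{x}) \neq \oo_{X,x}$, and the identity $\Ann(\mc{F})_{x} = \Ann(\mc{F}_{x})$ recorded just before Lemma \ref{coh1} rewrites this as $x \in V(\Ann(\mc{F}))$.

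For preimages, choose compatible affinoid covers so that one may assume $X = \Spa(R,R^{+})$, $X^{\prime} = \Spa(R^{\prime}, R^{\prime +})$ are both Noetherian affinoid and $Z = V(\wt{I})$. If $\phi \colon R \to R^{\prime}$ denotes the associated ring map, a direct check on valuations shows $v \in f^{-1}(Z)$ iff $\phi(I) \subset \Ker v$ iff $v \in V(\wt{IR^{\prime}})$, so $f^{-1}(Z) = V(\wt{IR^{\prime}})$ is Zariski closed.

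For images under a finite morphism, the cleanest route is to realize $f(Z^{\prime})$ as the support of a coherent pushforward. We may again assume $X = \Spa(R,R^{+})$ Noetherian, $X^{\prime} = \Spa(R^{\prime}, R^{\prime +})$ with $R^{\prime}$ finite over $R$, and $Z^{\prime} = V(\wt{I^{\prime}})$. Then $\mc{G} := f_{*}(\oo_{X^{\prime}}/\wt{I^{\prime}})$ corresponds to the finitely generated $R$-module $R^{\prime}/I^{\prime}$ and is coherent. I would show $\Supp(\mc{G}) = f(Z^{\prime})$ and then apply the first part of the corollary to conclude. As an alternative, one can write down $f(Z^{\prime}) = V(\wt{J})$ directly, with $J := \Ker(R \to R^{\prime}/I^{\prime})$: one inclusion is the same valuation computation as for preimages; the other requires lifting any valuation on $R/J$ to a valuation on the finite extension $R^{\prime}/I^{\prime}$ compatible with the integral-closure $+$-structure.

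The main obstacle I expect is precisely this lifting step, or equivalently the identity $\Supp(f_{*}\mc{G}) = f(\Supp \mc{G})$ for a finite morphism of locally Noetherian adic spaces: it requires that every valuation on $R$ whose support contains $J$ extends to a valuation on $R^{\prime}$ whose support contains $I^{\prime}$ and which is bounded on $R^{\prime +}$. This is a standard consequence of the Chevalley extension theorem for valuations together with the fact that $R^{\prime +}$ is the integral closure of the image of $R^{+}$ in $R^{\prime}$, but it is the one place where the finiteness of $f$ is genuinely used and where a little care with the structure of $+$-rings is needed. Everything else reduces to bookkeeping with coherent ideals and the first assertion of the corollary.
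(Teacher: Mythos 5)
Your handling of the first two statements is fine. For the first you reproduce the paper's stalk computation; the reduction to an affinoid $\Spa(R,R^+)$ is only needed for the coherence of $\Ann(\mc{F})$ (which is exactly Lemma \ref{coh1}), since the identity $\Ann(\mc{F})_x = \Ann(\mc{F}_x)$ is recorded in the text for all $x$ and the equivalence $\Ann(\mc{F}_x)=\OO_{X,x} \Leftrightarrow \mc{F}_x=0$ is pure algebra. For preimages your direct computation with kernels of valuations is a legitimate variant; the paper instead writes $f^{-1}(Z)=\Supp(f^{\ast}(\OO_X/\mc{I}))$ and invokes the part of the corollary just proved, which gets the conclusion in one line without any affinoid reduction. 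Both are correct.

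The genuine gap is in the third statement. You correctly identify the crux — that $\Supp\bigl(f_{\ast}(\OO_{X'}/\wt{I'})\bigr)=f(Z')$, i.e.\ the image of the support equals the support of the pushforward when $f$ is finite — but you do not prove it, and the route you sketch hides the whole difficulty. Extending a \emph{continuous} valuation along the integral extension $R/J\hookrightarrow R'/I'$, arranging that the extension is bounded on $R'^{+}$, and checking that the result actually lies in $\Spa(R',R'^{+})$ (i.e.\ is continuous for the right topology) is essentially a reproof of the fact that finite morphisms of analytic adic spaces are closed with surjective fibres; it is not an immediate consequence of a Chevalley-type extension lemma, and "a little care with the $+$-rings" is underselling it.

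The paper avoids this altogether. After reducing to $Z'=X'$, it uses two soft facts: that finite morphisms of adic spaces are \emph{closed} (a black box one is entitled to cite from Huber), and the sheaf axiom. Closedness gives $\Supp(f_{\ast}\OO_{X'})\subset f(X')$, because the complement of $f(X')$ is open and the pushforward has no sections over opens disjoint from the image. If the inclusion were strict, one could find an open affinoid $U$ meeting $f(X')$ but missing the closed set $\Supp(f_{\ast}\OO_{X'})$; then $(f_{\ast}\OO_{X'})|_U=0$ since all its stalks vanish, yet $(f_{\ast}\OO_{X'})(U)=\OO_{X'}(f^{-1}(U))\neq 0$ because $f^{-1}(U)\neq\emptyset$, a contradiction. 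No valuation lifting and no affinoid reduction are required. You should either replace your sketch with this global argument, or genuinely carry out the extension-of-valuations argument, in which case both the continuity of the extended valuation and its compatibility with the integral-closure $+$-ring need explicit verification rather than a pointer to Chevalley.
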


\begin{proof}
$\Ann(\mc{F})$ is a coherent ideal by Lemma \ref{coh1}. To see that 
$\Supp(\mc{F})=V(\Ann(\mc{F}))$, note that, for any $x\in X$, 
$\Ann(\mc{F})_{x}=\Ann(\mc{F}_{x})$ and that $\Ann(\mc{F}_{x})=\oo_{X,x}$ if 
and only if $\mc{F}_{x}=0$. For the second statement, choose a coherent 
$\oo_{X}$-ideal $\mc{I}$ such that $Z=V(\mc{I})$. Then 
$f^{-1}(Z)=\Supp(f^{\ast}(\OO_X/\mc{I}))$ is Zariski closed by the first 
statement of the corollary.

\medskip
It remains to prove the final statement. Replacing $X^{\prime}$ by $Z^{\prime}$ (choosing some coherent $\oo_{X^{\prime}}$-ideal $\mc{J}$ with $Z^{\prime}=V(\mc{J})$) we may assume that $X^{\prime}=Z^{\prime}$. We wish to prove that $f(X^{\prime})=\Supp(f_{\ast}\oo_{X^{\prime}})$. Since $f$ is finite, $f(X^{\prime})$ is closed, so we must have $\Supp(f_{\ast}\oo_{X^{\prime}}) \sub f(X^{\prime})$. If the inclusion is proper, we can find an open affinoid set $U\sub X$ which intersects $f(X^{\prime})$ but is disjoint from $\Supp(f_{\ast}\oo_{X^{\prime}})$, since $\Supp(f_{\ast}\oo_{X^{\prime}})$ is closed. But then we must have $(f_{\ast}\oo_{X^{\prime}})|_{U}=0$, but also $(f_{\ast}\oo_{X^{\prime}})(U)=\oo_{X^{\prime}}(f^{-1}(U))\neq 0$ since $f^{-1}(U)\neq \emptyset$, a contradiction. This finishes the proof.
\end{proof}

In particular, if $f : X \ra Y$ is a morphism of locally Noetherian adic spaces, then it is continuous with respect to the Zariski topologies on $X$ and $Y$. Let us record a few basic general topology facts about the Zariski topology. Recall that a topological space $Z$ is said to be \emph{Noetherian} if it satisfies the descending chain condition for closed subsets. We say that a topological space is \emph{locally Noetherian} if it has an open cover by Noetherian spaces. Note that if a topological space is quasicompact and locally Noetherian, then it is Noetherian.

\begin{lemma}\label{irr1}
Let $X$ be a locally Noetherian adic space.
\begin{enumerate}
\item If $X=\Spa(R,R^{+})$ is affinoid with $(R,R^{+})$ Noetherian, then $X$ is Noetherian for the Zariski topology.

\item $X$ is locally Noetherian for the Zariski topology, and hence if $X$ is quasicompact then $X$ is Noetherian for the Zariski topology.
\end{enumerate}
\end{lemma}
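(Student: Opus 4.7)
The plan is to prove (1) by reducing the descending chain condition on Zariski closed subsets to the ascending chain condition in the Noetherian ring $R$, via the support map $\phi : \Spa(R, R^{+}) \to \Spec(R)$. Part (2) will then follow quickly from (1) by working locally on an affinoid Noetherian open cover and invoking the topological fact, recalled just before the lemma, that quasicompact and locally Noetherian implies Noetherian.

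For part (1), given a descending chain $Z_1 \supseteq Z_2 \supseteq \cdots$ in $X = \Spa(R, R^{+})$, the first step is to attach to each $Z_i$ a canonical radical ideal of $R$, namely
\[
J_i \;:=\; \bigcap_{v \in Z_i} \Ker v \;\subseteq\; R,
\]
which is an ideal since each $\Ker v$ is prime. The containment $Z_1 \supseteq Z_2$ immediately yields $J_1 \subseteq J_2$, so one obtains an ascending chain of ideals in the Noetherian ring $R$, which stabilizes. The critical step is to check that $Z_i$ is recoverable from $J_i$, i.e.\ that $Z_i = V(\wt{J_i})$. The inclusion $Z_i \subseteq V(\wt{J_i})$ is immediate from Proposition \ref{zar2} and the definition of $J_i$; for the reverse inclusion, one uses that $Z_i$ is already of the form $V(\wt{K_i}) = \phi^{-1}(V(K_i))$ for some ideal $K_i$ (by Proposition \ref{zar2} together with the equivalence between coherent $\OO_X$-ideals and ideals of $R$), which forces $K_i \subseteq J_i$ and hence $V(\wt{J_i}) \subseteq V(\wt{K_i}) = Z_i$. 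I expect this recoverability to be the main obstacle: the support map $\phi$ is in general neither injective nor surjective onto $\Spec(R)$, so the correspondence between Zariski closed subsets of $X$ and ideals of $R$ is many-to-one, and the trick of defining $J_i$ as an intersection of valuation kernels is arranged precisely to produce a radical ideal capturing $Z_i$ exactly.

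For part (2), the plan is as follows. First observe that every Zariski closed subset is closed in the usual topology on $X$, so the Zariski topology is coarser than the usual topology. In particular, any affinoid open $U = \Spa(R, R^{+}) \subseteq X$ with $(R, R^{+})$ Noetherian is Zariski open in $X$. Its subspace Zariski topology inherited from $X$ is in turn coarser than $U$'s own Zariski topology, because restrictions of coherent $\OO_X$-ideals to $U$ are coherent $\OO_U$-ideals; the latter topology is Noetherian by (1), and Noetherianity is clearly inherited by coarser topologies, so $U$ is Noetherian for the subspace Zariski topology. This exhibits $X$ as locally Noetherian for its Zariski topology, and the quasicompact case then follows from the general topological fact recalled in the excerpt (noting that quasicompactness in the usual topology passes to the coarser Zariski topology).
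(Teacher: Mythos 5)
Your proof of part (1) is correct and is essentially the argument the paper intends (the paper compresses it into a one-line citation of Proposition~\ref{zar2} and the Noetherianity of $\Spec(R)$): attaching to each Zariski closed $Z$ the radical ideal $J_Z = \bigcap_{v \in Z}\Ker v$ and verifying the recoverability $Z = V(\wt{J_Z})$ is precisely the step that makes the reduction to ACC in $R$ work, and you have spelled it out cleanly.

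In part (2), however, the assertion that ``any affinoid open $U = \Spa(R,R^{+}) \subseteq X$ with $(R,R^{+})$ Noetherian is Zariski open in $X$'' is false, and it does not follow from the Zariski topology being coarser than the ambient topology: coarser means Zariski-open implies open, not the converse. For a concrete counterexample, take $X = \Spa(\Q_p\langle T\rangle)$ and the rational subset $U = \{|T|\le |p|\}$. By Proposition~\ref{zar2} the proper nonempty Zariski closed subsets of $X$ are preimages of proper closed subsets of $\Spec(\Q_p\langle T\rangle)$, hence finite sets of rank-one points; the complement $X\setminus U$ is certainly not of this form, so $U$ is not Zariski open. The rest of your reasoning is sound and agrees with the paper's proof (which carefully avoids claiming the affinoid opens are Zariski open): the subspace Zariski topology on $U$ is coarser than $U$'s intrinsic Zariski topology, hence Noetherian by part (1). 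What one then extracts, and all that is needed for the quasicompact conclusion, is that $X$ has a cover by opens \emph{in the ambient topology} each of which is Noetherian in the subspace Zariski topology; for $X$ quasicompact one takes a finite subcover in the ambient topology (not the Zariski one) and the descending chain condition for Zariski closed subsets follows piecewise. So the spurious claim is harmless to the intended conclusion but should simply be deleted.
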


\begin{proof}
For (1), note that if $(R,R^{+})$ is Noetherian then $R$ is a Noetherian ring, so the statement follows from Proposition \ref{zar2} and the fact that $\Spec(R)$ is Noetherian. For (2), note that if $U=\Spa(S,S^{+})\sub X$ is an open affinoid with $(S,S^{+})$ Noetherian, then the subspace topology on $U$ coming from the Zariski topology on $X$ is coarser than the Zariski topology on $U$. It follows from (1) that $U$ is Noetherian with respect to the subspace topology, which proves that $X$ is locally Noetherian.
\end{proof}

Let us start to discuss irreducible components, by making the following standard definition:

\begin{definition}
Let $X$ be a locally Noetherian adic space. A Zariski closed subset $Z$ is said be \emph{irreducible} if, for any Zariski closed subsets $Z_{1},Z_{2}$ of $X$, $Z\sub Z_{1}\cup Z_{2}$ implies $Z\sub Z_{1}$ or $Z\sub Z_{2}$. $Z$ is said to be an \emph{irreducible component of $X$} if it is irreducible and not properly contained in any other irreducible Zariski closed set. 
\end{definition}

There are a number of things we would expect from a satisfactory theory of 
irreducible components. In particular, we would like to be able to write $X$ as 
the union of its irreducible components (and that the union of any proper 
subset of irreducible components is not the whole of $X$), and we would like to 
be able to give any Zariski closed subset a \emph{canonical} structure of a 
reduced locally Noetherian adic space. For now, we will not say so much about 
the first question, except to note that if $X$ is quasicompact, then the 
Zariski topology is Noetherian by above and we have a satisfactory notion of 
irreducible components. Our intended applications, however, are to 
eigenvarieties, which are not quasicompact.

\subsection{Pseudorigid spaces}

Let $K$ be a complete discretely valued field with ring of integers $\oo_{K}$, a uniformizer $\pi_{K}$ and residue field $k$. We start by giving a name to the family of Tate rings over $\oo_{K}$ that we will be working with in this paper. Whenever we drop $R^{+}$ from the notation, it is assumed that $R^{+}=R^{\circ}$. Accordingly, we will often conflate an f-adic ring $R$ with the affinoid ring $(R,R^{\circ})$.

\begin{definition}\label{ps1}
Let $R$ be a Tate $\oo_{K}$-algebra. We say that $R$ is \emph{a Tate ring 
formally of finite type over $\oo_{K}$}\footnote{In \cite{lou}, these are 
called pseudoaffinoid $\oo_K$-algebras.} if $R$ has a ring of definition 
$R_{0}$ which is formally of finite type over $\oo_{K}$ in the usual sense 
(i.e. has a radical ideal of definition $I$ such that $R_{0}/I$ is a finitely 
generated $k$-algebra).
\end{definition}
\begin{remark}
	The topology on a ring of definition $R_0$ for a Tate 
	$\oo_{K}$-algebra $R$ is the $(\vp)$-adic topology for some topologically 
	nilpotent unit $\vp \in R$. We obtain a map $\oo_K[[X]]\rightarrow R_0$ 
	defined by $X\mapsto\vp$ and 
	$R_0$ is formally of finite type over $\oo_K$ if and 
	only if it is topologically of finite type over $\oo_K[[X]]$, i.e.~if and 
	only if it is (topologically) isomorphic to a quotient of 
	$\oo_K[[X]]\langle Y_1,\ldots,Y_n\rangle$ for some $n$.
\end{remark}

We note that any ideal $I$ of a Tate ring $R$ formally of finite type over $\ok$ is closed, and that if $S$ is an f-adic ring which is topologically of finite type over $R$ then $S$ is a Tate ring formally of finite type over $\ok$ as well. In particular, all quotients and all rational localizations of $R$ are Tate rings formally of finite type over $\ok$. Let us recall a few results about Tate rings formally of finite type over $\ok$ from \cite[Appendix A]{jn}.

\begin{proposition}\label{ps2}
Let $R$ and $S$ be Tate rings formally of finite type over $\ok$, and let $f : 
R\ra S$ be continuous homomorphism which induces an open immersion 
$\Spa(S,S^{+}) \ra \Spa(R)$ for some ring $S^{+}$ of integral elements in $S$. 
Then the following hold:
\begin{enumerate}
\item $R$ is Jacobson and excellent.

\item $S^{+}=S^{\circ}$.

\item If $\mf{m}$ is a maximal ideal of $S$, then $f^{-1}(\mf{m})$ is a maximal ideal of $R$, and the natural map $R_{f^{-1}(\mf{m})} \ra S_{\mf{m}}$ induces an isomorphism on completions.

\item\label{ps2finite} If $R \ra T$ is finite and $(R,R^{\circ}) \ra (T,T^{+})$ 
is the corresponding finite morphism of affinoid rings, then $T^{+}=T^{\circ}$.

\item If $R$ is reduced, then $S$ is reduced.
\end{enumerate}
\end{proposition}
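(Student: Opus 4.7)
The proof will rest on the structural description of $R$ as $R_0[1/\vp]$, where $R_0$ is a quotient of $\ok[[X]]\langle Y_1,\ldots,Y_n\rangle$ for some $n$, as noted in the remark just preceding the proposition. For part (1), my plan is to first establish excellence: $\ok[[X]]$ is a complete Noetherian local ring, hence excellent, and by Valabrega/Kiehl-type results on convergent power series over excellent rings, $\ok[[X]]\langle Y_1,\ldots,Y_n\rangle$ is excellent. Since excellence descends to quotients and to localizations, both $R_0$ and $R$ are excellent. For the Jacobson property, I would prove a Nullstellensatz for $R$: at each maximal ideal $\m$ of $R$, the residue field $R/\m$ is finite over $K$. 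This should come from a Noether normalization argument adapted to the formally finite type setting (exhibiting a finite injection into $R$ from a "ring of convergent Laurent series" $\ok\langle T_1,\ldots,T_d\rangle[[X]][1/X]$ or its generalization, then applying the classical Nullstellensatz for Tate algebras at the generic fibre), after which $R$ being Jacobson follows by a standard argument using Noetherianity.

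For part (2), I would reduce to the case where $\Spa(S,S^+)$ is a rational subset of $\Spa(R)$: open immersions of analytic adic spaces are locally rational, and the condition $S^+=S^\circ$ can be checked on an affinoid cover since power-boundedness is a local condition. For a rational localization of $R$ (where $R^+=R^\circ$ by the standing convention), the standard Huber construction produces $S$ with $S^+=S^\circ$. Part (4) proceeds along the same lines: for a finite continuous $R\ra T$, the ring $T^+$ in the affinoid ring structure is by definition the integral closure of $R^+$ in $T$, and in our uniform setting an element of $T$ is power-bounded if and only if it is integral over $R^\circ=R^+$, so $T^+=T^\circ$.

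For (3), the key input is the general fact, valid for any locally Noetherian adic space, that the completed stalk $\wh{\OO}_{X,x}$ depends only on an arbitrarily small affinoid neighborhood of $x$. The open immersion $\Spa(S,S^+)\ra\Spa(R)$ therefore induces an isomorphism of completed stalks at $\m$, which on affinoid rings translates into the asserted isomorphism $\wh{R_{f^{-1}(\m)}}\cong \wh{S_\m}$. Maximality of $f^{-1}(\m)$ is then read off from the Nullstellensatz established in (1): both $R/f^{-1}(\m)$ and $S/\m$ are finite-dimensional over $K$ after passing to completions, and faithful flatness of completion forces $f^{-1}(\m)$ to be maximal in $R$. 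Finally, for (5), excellence of $R$ from (1) combined with (3) implies that the completed local rings of $S$ at closed points are reduced (since completions of excellent reduced rings along maximal ideals are reduced); and $S$ embeds into the product of these completions by Jacobson-ness plus Krull's intersection theorem, so $S$ is reduced.

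The principal obstacle is the Nullstellensatz in part (1): it underpins both the Jacobson property of $R$ and the residue-field comparison used to deduce maximality of $f^{-1}(\m)$ in part (3), and it is the one step that genuinely uses that our rings are formally of finite type rather than merely complete Tate rings. Once the Nullstellensatz is in hand, the remaining parts are reasonably routine consequences of excellence, rational localization, and Huber's theory of open immersions of analytic adic spaces.
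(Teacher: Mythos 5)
The core of your strategy is reasonable, and your treatment of excellence matches the paper's (which also invokes Valabrega's results for excellence of $\ok[[X]]\langle Y_1,\ldots,Y_n\rangle$). However, there is a genuine error in your proposed ``Nullstellensatz,'' and that error propagates into parts (1) and (3).

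You claim that for every maximal ideal $\m$ of $R$, the residue field $R/\m$ is finite over $K$. This is false in the pseudorigid setting. Take $R=\ok[[T]][1/T]$ (so $R_0=\ok[[T]]$ with topologically nilpotent unit $T$). Then $\m=(\pi_K)$ is a maximal ideal of $R$, and $R/\m = k((T))$, a complete discretely valued field of equal characteristic, which is not a finite extension of $K$ (indeed not an extension of $K$ at all when $\mathrm{char}\,K=0$). The correct statement (cf.~Lemma~\ref{ps4} in the paper) is only that $R/\m$ is a complete discretely valued field; its relationship to $K$ is not ``finite extension.'' This breaks your Jacobson argument as written and it breaks your deduction of maximality of $f^{-1}(\m)$ in (3) (you argue ``both $R/f^{-1}(\m)$ and $S/\m$ are finite over $K$,'' which is not available). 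The Jacobson property has to be established differently (the paper cites \cite[Lemma~A.1]{jn}; one route is via a Noether-normalisation argument producing a finite injection from a Laurent-convergent model algebra and then a careful analysis of its prime spectrum, but the conclusion is not ``finite residue field over $K$''). Maximality of $f^{-1}(\m)$ similarly needs an argument that does not go through residue-field finiteness; for instance via injectivity of $\Spa(S,S^+)\hookrightarrow \Spa(R)$ together with the description of the point attached to $\m$ as a rank-one discrete valuation.

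Your parts (2) and (4) also gloss over the actual content. For (2) you assert that ``the standard Huber construction produces $S$ with $S^+=S^\circ$'' for a rational localization of $(R,R^\circ)$; but Huber's construction produces $S^+$ as the integral closure of $R^\circ[f_1/g,\ldots]$ in $S$, and the equality of this with the full ring $S^\circ$ of power-bounded elements is precisely the theorem one has to prove (it fails for general Tate rings and is specific to these formally-of-finite-type rings; in the paper it rests on \cite[Theorem~A.7]{jn}, the analogue of the classical fact that sup-norm equals spectral seminorm for reduced affinoid algebras). Similarly for (4), ``an element of $T$ is power-bounded iff integral over $R^\circ$'' is the statement of \cite[Lemma~A.3]{jn}, not a consequence of uniformity: the direction ``power-bounded $\Rightarrow$ integral over $R^\circ$'' is the nontrivial half and requires a Noether-normalisation-type input. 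Your outline for (5) is sound once (1) and (3) are repaired.
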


\begin{proof}
This consists of collecting well known results together with results from \cite{jn} (which in turn are deduced mostly from well known results and results from \cite{ab}). We start with part (1). $R$ is Jacobson by \cite[Lemma A.1]{jn}. If $R_{0}$ is a ring of definition of $R$ which is formally of finite type over $\ok$, then $R_{0}$ is excellent by \cite[Proposition 7]{v1} and \cite[Theorem 9]{v2}, and hence $R$ is excellent since it is finitely generated over $R_{0}$. Part (2) follows from \cite[Theorem A.7]{jn} in the reduced case, but the argument to show that $S^{+}=S^{\circ}$ works without the reducedness assumption. Part (3) follows from \cite[Proposition A.15(2)]{jn}. Part (4) follows from \cite[Lemma A.3]{jn}. Part (5) follows from parts (1) and (2) together with standard properties reducedness and excellence, cf. the first paragraph of \cite[\S 1.2]{con}, or alternatively from \cite[Theorem A.7]{jn}.
\end{proof}

Next, we make a global definition.

\begin{definition}\label{ps3}
Let $X$ be an adic space over $\oo_{K}$. We say that $X$ is a \emph{pseudorigid space over $\oo_{K}$} if it is locally of the form $\Spa(R)$, for $R$ a Tate ring formally of finite type over $\oo_{K}$.
\end{definition}

We remark that any rigid space over $K$ is a pseudorigid space over $\oo_{K}$, where by rigid space over $K$ we mean an adic space $X$ over $K$ which is locally of the form $\Spa(R)$, for $R$ topologically of finite type over $K$. We will say that $X$ is an \emph{affinoid pseudorigid space over $\ok$} if it is equal to $\Spa(R)$ for some $R$ which is a Tate ring formally of finite type over $\ok$. Our next goal is to single out the points on a pseudorigid space that locally come from maximal ideals.

\begin{lemma}\label{ps4}
Let $R$ be a Tate ring formally of finite type over $\ok$, and let $\mf{m}\sub R$ be a maximal ideal. Then there exists a unique $v\in \Spa(R)$ with $\Ker v=\mf{m}$. This gives a canonical embedding of the spectrum $\Max(R)$ of maximal ideals into $\Spa(R)$, compatible with the morphism $\Spa(R) \ra \Spec(R)$. 
\end{lemma}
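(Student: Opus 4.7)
The plan is to identify the fibre of the support map $\Spa(R,R^\circ)\to \Spec(R)$ over $\mf{m}$ with $\Spa(R/\mf{m},(R/\mf{m})^+)$, where $(R/\mf{m})^+$ denotes the integral closure in $R/\mf{m}$ of the image of $R^\circ$, and then to show this fibre consists of a single point by establishing that $L:=R/\mf{m}$ is a complete discretely valued field.

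First I would make the routine reduction: any $v\in\Spa(R,R^\circ)$ with $\ker v = \mf{m}$ factors uniquely through the quotient $R\to L$, giving a continuous valuation on $L$ (with its quotient topology, in which it is complete since ideals of $R$ are closed) bounded on $(R/\mf{m})^+$; conversely, any such valuation on $L$ lifts canonically back to a point of $\Spa(R,R^\circ)$ with kernel $\mf{m}$, so the task is to analyse $\Spa(L,(R/\mf{m})^+)$.

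The main step is to show $L$ is a complete DVF. Note that $L$ is again a Tate ring formally of finite type over $\ok$, and the image of a topologically nilpotent unit $\vp\in R$ remains a topologically nilpotent unit in $L$. I would split on whether $\pi_K\in \mf{m}$. If $\pi_K\notin \mf{m}$, then $L$ is a Tate $K$-algebra, and using that $L$ is Jacobson and Noetherian (Proposition~\ref{ps2}(1)) one shows $L$ is topologically of finite type over $K$, so by the classical structure theorem for affinoid $K$-algebras, $L$ is a finite extension of $K$. If $\pi_K\in \mf{m}$, then writing $R_0\cong \ok[[X]]\langle Y_1,\ldots,Y_n\rangle/J$ and reducing mod $\pi_K$ exhibits $R/\pi_K R$ as a Tate algebra topologically of finite type over $k((X))$; the image of $\mf{m}$ in this quotient is then a maximal ideal whose residue field $L$ is, by the same classical theory, a finite extension of the complete DVF $k((X))$.

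Once $L$ is known to be a complete DVF, the fact that $\Spa(L,\oo_L)$ is a single point is standard: continuity forces $v(\pi_L)<1$ for a uniformizer $\pi_L$, which pins down the associated valuation ring as $\oo_L$, and equivalent rank-one valuations determine the same point. Combined with the automatic inclusion $(R/\mf{m})^+\subseteq \oo_L$ (since the image of $R^\circ$ is power-bounded in $L$ and $\oo_L$ is integrally closed in $L$), this yields that $\Spa(L,(R/\mf{m})^+)$ is a single point, giving both existence and uniqueness of $v$ with $\ker v=\mf{m}$; compatibility with $\Spa(R)\to\Spec(R)$ is immediate from the construction. The main obstacle is the case analysis showing that $L$ is a complete DVF, particularly for $\pi_K\in\mf{m}$, where one must carefully use the structure theory of Tate rings formally of finite type over $\ok$ recorded in \cite[Appendix A]{jn}.
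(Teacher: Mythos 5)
Your reduction --- showing that $L := R/\mf{m}$ is a complete discretely valued field and then analysing $\Spa(L,(R/\mf{m})^+)$ --- matches the paper's structure, and your idea of splitting on $\pi_K\in\mf{m}$ in order to reduce to the classical affinoid theory over a complete DVF (over $K$ or over $k((X))$, respectively) is a genuine alternative to the paper's appeal to the $1$-valuative-order machinery of Artin--Brochard. The case $\pi_K\in\mf{m}$ works as you describe. However, the case $\pi_K\notin\mf{m}$ has a real gap: "Jacobson and Noetherian" do \emph{not} imply that a Tate $K$-algebra is topologically of finite type over $K$. The missing ingredient is one-dimensionality: with $S := R_0/\mf{p}$, a $1$-dimensional complete local Noetherian domain in which $\ol{\pi_K}\neq 0$, the ideal $(\ol{\pi_K})$ is $\mf{m}_S$-primary, so the $\ol{\pi_K}$-adic and $\vp$-adic topologies on $S$ coincide; since $S/\ol{\pi_K}S$ is a finite $k$-algebra and $S$ is $\ol{\pi_K}$-adically complete, $S$ is finite over $\ok$ and $L=S[1/\ol{\pi_K}]$ is a finite extension of $K$. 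This is essentially the same content that the $1$-valuative-order argument packages, so the case split does not really save work.

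The more serious gap is the final singleton step. From $(R/\mf{m})^+\subseteq\oo_L$ one only deduces $\Spa(L,\oo_L)\subseteq\Spa(L,(R/\mf{m})^+)$; shrinking the ring of integral elements \emph{enlarges} the adic spectrum, so this is the wrong direction. Moreover, the claim that "continuity forces $v(\pi_L)<1$, which pins down the valuation ring as $\oo_L$" is only valid for rank-one valuations: a continuous higher-rank refinement of the canonical rank-one valuation still has $v(\pi_L)<1$ but $\oo_v\subsetneq\oo_L$, and such a point lies in $\Spa(L,L^+)$ whenever $\oo_L$ is not integral over $L^+$. What you actually need is the reverse inclusion $(R/\mf{m})^+\supseteq\oo_L$, hence equality. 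The paper secures this at the very first step by invoking Proposition~\ref{ps2}(4), which gives $(R/\mf{m})^+=(R/\mf{m})^{\circ}=\oo_L$; it also follows from observing that $\oo_L$ is the integral closure of $R_0/\mf{p}$ in $L$, so $\oo_L\subseteq(R/\mf{m})^+$.
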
 

\begin{proof}
The locus of $v\in \Spa(R)$ with $\Ker v=\mf{m}$ is equal to 
$\Spa(R/\mf{m})$ (here we use Prop.~\ref{ps2}(4)), so we have to prove 
that this is a singleton. Let $R_{0}$ be a ring of definition formally of 
finite type over $\ok$, and let $\vp\in R$ be a topologically nilpotent unit, 
which we assume lies in $R_{0}$. Let $\mf{p}=R_{0}\cap \mf{m}$. The quotient 
ring $R/\mf{m}$ carries the topology where $R_{0}/\mf{p}$ is open and carries 
the $\ol{\vp}$-adic topology, where $\ol{\vp}$ is the reduction of $\vp$. Since 
$\mf{p}$ is closed point in $\Spec(R_{0})\setminus \{\vp=0\}=\Spec(R)$, 
$R_{0}/\mf{p}$ is a $1$-valuative order. Here we refer to \cite[Definition 
1.11.1]{ab} or \cite[Definition A.11]{jn} for the definition of a $1$-valuative 
order (at least in the Noetherian setting), and \cite[Proposition 1.11.8]{ab} 
for the implication (which is an equivalence). As a result, 
$R/\mf{m}=(R_{0}/\mf{p})[1/\ol{\vp}]$ is a complete discretely valued field, 
with valuation ring the integral closure of $R_{0}/\mf{p}$, and $R_{0}/\mf{p}$ 
is open in the valuation topology. It follows that the valuation topology is 
equal to the quotient topology coming from $R$, and hence that $\Spa R/\mf{m}$ 
is a singleton, as desired. This proves the first statement, and the remaining 
statements are immediate.
\end{proof}

\begin{proposition}\label{ps5}
Let $R$ and $S$ be Tate rings formally of finite type over $\ok$, and let $f : R \ra S$ be a continuous $\ok$-homomorphism. Then $f$ is topologically of finite type, and preimages of maximal ideals are maximal ideals. Hence the induced map $\phi : \Spa(S) \ra \Spa(R)$ maps $\Max(S)$ into $\Max(R)$. Moreover, $\Max(R)$ is dense in $\Spa(R)$.
\end{proposition}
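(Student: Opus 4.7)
I would attack this proposition one assertion at a time, first establishing that $f$ is topologically of finite type, then using this to prove maximal ideals contract to maximal ideals, and finally deducing density of $\Max(R)$ by combining these with the Jacobson property.

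For the topologically-of-finite-type assertion, the idea is to build a surjection $R\langle T_1,\ldots,T_n\rangle\twoheadrightarrow S$ from a presentation of a ring of definition of $S$. Choose rings of definition $R_0\sub R$ and $S_0\sub S$ formally of finite type over $\ok$, with topologically nilpotent units $\vp_R\in R_0$ and $\vp_S\in S_0$. Since $\vp_R$ is a unit in $R$, $f(\vp_R)\in S^\times$; it is also topologically nilpotent by continuity, so it is a topologically nilpotent unit in $S$. Continuity of $f$ also forces $f(R_0)$ to be bounded, so after enlarging $S_0$ we may assume $f(R_0)\sub S_0$ and $f(\vp_R)\in S_0$, and that $f(\vp_R)$ may serve as the topologically nilpotent unit of $S$. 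Using a presentation $S_0=\ok[[X]]\langle Y_1,\ldots,Y_n\rangle/I$ with $X\mapsto f(\vp_R)$ and $y_i$ the image of $Y_i$, I would then consider the continuous map $R\langle T_1,\ldots,T_n\rangle\to S$ sending $T_i\mapsto y_i$. To see surjectivity, write $s\in S$ as $s_0/f(\vp_R)^k$, $s_0\in S_0$, and expand $s_0=\sum_{i,J}c_{i,J}f(\vp_R)^iy^J$ with $c_{i,J}\in\ok$, where convergence in $S_0$ translates to the vanishing of $c_{i,J}$ for $i$ small and $|J|$ large. Using $1/f(\vp_R)=f(1/\vp_R)\in f(R)$, one sets $a_J:=\sum_i c_{i,J}\vp_R^{i-k}\in R$; the same vanishing condition shows $a_J\to 0$ in $R$ and $s=\sum_J f(a_J)y^J$, giving surjectivity.

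For the statement about maximal ideals, step one lets us factor $f$ through a surjection $R\langle T_1,\ldots,T_n\rangle\twoheadrightarrow S$, so it suffices to show maximal ideals of $R\langle T_1,\ldots,T_n\rangle$ contract to maximal ideals of $R$; by induction we may assume $n=1$. Given a maximal ideal $\m\sub R\langle T\rangle$ with $\p:=\m\cap R$, Lemma \ref{ps4} identifies $L:=R\langle T\rangle/\m$ with a complete discretely valued field, and the map $R/\p\hookrightarrow L$ realizes $L$ as $(R/\p)[\ol{T}]$. I would then carry out a Weierstrass-type preparation argument in the pseudoaffinoid setting (in the spirit of the appendix of \cite{jn}, building on \cite{ab}): after a change of variable in $T$, some element of $\m$ becomes, up to units, a monic polynomial in $T$ over $R$. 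Such a relation forces $\ol{T}$ to be integral over $R/\p$, so $L/(R/\p)$ is module-finite, hence integral; since $L$ is a field and $R/\p$ a domain, the standard fact that a domain over which a field is integral is itself a field yields that $R/\p$ is a field, i.e.\ $\p$ is maximal.

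For density, any non-empty open subset of $\Spa(R)$ contains a non-empty rational open $\Spa(R')$. Since $R'$ is again a non-zero Tate ring formally of finite type over $\ok$, Prop.~\ref{ps2}(1) says it is Jacobson, and so admits some maximal ideal $\m'$. By Lemma \ref{ps4} this corresponds to a point $v'\in\Spa(R')\sub\Spa(R)$, and by the preceding step applied to $R\to R'$, $\Ker(v')\cap R$ is maximal in $R$, so $v'\in\Max(R)$, proving density. The main obstacle is step (ii): the Weierstrass/Noether normalization required in the pseudoaffinoid setting is more delicate than over a field base, since one cannot directly appeal to the classical Tate algebra arguments over $K$. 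The first and third steps are fairly routine given the structural results already proved in \cite{jn}.
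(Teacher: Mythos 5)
The paper proves this proposition almost entirely by citation: the first two assertions are exactly \cite[Corollary A.14]{jn} (stated there for $K=\Qp$, with the same proof working in general), and only the density of $\Max(R)$ is argued directly, by observing that for a rational $U = \Spa(\oo_X(U))\subset\Spa(R)$, the first part gives $\Max(\oo_X(U))\subset U\cap\Max(R)$, and $\Max(\oo_X(U))\neq\emptyset$ whenever $U\neq\emptyset$. Your density argument is the same one, phrased slightly differently (invoking Jacobson-ness of $R'$ is more than is needed — any nonzero ring has a maximal ideal — but harmless). So for the part of the proof the paper actually carries out, you and the paper coincide.

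Where you diverge is that you set out to reprove the cited lemma from first principles. Your construction of a topological surjection $R\langle T_1,\dots,T_n\rangle\twoheadrightarrow S$, by choosing rings of definition compatibly and unwinding a presentation $\ok[[X]]\langle Y_1,\dots,Y_n\rangle\twoheadrightarrow S_0$ with $X\mapsto f(\vp_R)$, is the standard route and is sound (one does need to use that $\pi_K$ is topologically nilpotent in any such $R_0$ to convert the $\pi_K$-adic decay of the $c_{i,J}$ into decay of the $a_J$ in $R$, but this does hold because the ideal of definition of $R_0$ contains $\pi_K$). The reduction of the maximal-ideal statement to the case $R\langle T\rangle$ and then by induction to a single variable is also correct. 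The genuine gap — which you honestly flag — is the Weierstrass-type preparation over a pseudoaffinoid base $R$: given $\m\subset R\langle T\rangle$ maximal, producing, after a change of variable, an element of $\m$ that is a unit times a monic polynomial over $R$. Over a nonarchimedean field this is classical, but over a general $R$ the notion of a ``distinguished'' element is more delicate, and the needed input is exactly what the appendix of \cite{jn} (building on \cite{ab}) supplies. As a minor point, the assertion that $R/\p\hookrightarrow L$ ``realizes $L$ as $(R/\p)[\ol T]$'' is premature: a priori $(R/\p)[\ol T]$ is only dense in $L$, and the equality only becomes available once the Weierstrass argument has established that $\ol T$ is integral over $R/\p$, which is the conclusion you are after. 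So the proposal is correct in outline and tracks the ingredients behind \cite[Corollary A.14]{jn}, but the crux (Weierstrass/Noether normalization over pseudoaffinoid rings) remains a sketch, exactly as you note.
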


\begin{proof}
The first part is \cite[Corollary A.14]{jn} (the statement is only for $K=\Qp$, but the proof works the same). The second part follows immediately. For the third part, note that if $U\sub \Spa(R)$ is a rational subset, then the first part implies that $\Max(\oo_{X}(U))=U\cap \Max(R)$. It follows that $U\cap \Max(R)$ is non-empty if $U$ is, and hence that $\Max(R)$ is dense.
\end{proof}

Armed with this we may generalize the notion of `classical points' on a rigid space to pseudorigid spaces. 

\begin{definition}\label{ps6}
Let $X$ be a pseudorigid space. We say that a point $x\in X$ is \emph{maximal} if there exists an open affinoid pseudorigid neighbourhood $U=\Spa(R)$ of $x$ such that $x\in \Max(R)\sub U$. We denote the set of maximal points by $\Max(X)$.
\end{definition}

If $x\in \Max(X)$ and $V=\Spa(S)\sub X$ is an open affinoid pseudorigid neighbourhood, then by Proposition \ref{ps5} $x\in \Max(S)$. Moreover, it also follows that $\Max(X)$ is dense, that $\Max(X)=\Max(R)$ if $X=\Spa(R)$ is affinoid pseudorigid, and that any morphism $f : X \ra Y$ of pseudorigid spaces maps $\Max(X)$ into $\Max(Y)$.

\medskip

Let us now discuss the Zariski topology for pseudorigid spaces. To do this, we 
need the radical of a coherent ideal, following \cite[(9.5.1)]{bgr}. Let $X$ be 
a pseudorigid space and let $\mc{I}$ be a coherent $\oo_{X}$-ideal. We define 
the radical $\sqrt{\mc{I}}$ of $\mc{I}$ to be the sheaf on $X$ associated with 
the presheaf
$$ U \mapsto \sqrt{\mc{I}(U)}, $$
where, if $A$ is any ring and $J\sub A$ is an ideal, $\sqrt{J}$ denotes the radical of $J$. This construction commutes with restriction to open subsets, and defines an $\oo_{X}$-ideal, which we would like to be coherent. One checks, using that the formation of the (usual) radical commutes with filtered direct limits, that \cite[\S9.5.1 Proposition 1]{bgr} goes through in our setting, with the same proof. In particular, \emph{if $\sqrt{\mc{I}}$ is coherent}, then $V(\mc{I})=V(\sqrt{\mc{I}})$ and $\sqrt{\mc{I}}|_{U}$ is the sheaf attached to the ideal $\sqrt{\mc{I}(U)}$ for any affinoid open $U=\Spa(S,S^{+})$ with $(S,S^{+})$ Noetherian. So, to prove that $\sqrt{\mc{I}}$ is coherent, we may reduce to the affinoid case.

\begin{proposition}\label{irr2}
Let $X=\Spa(R)$ be an affinoid pseudorigid space. Let $I\sub R$ be an ideal and put $\mc{I}=\wt{I}$. Then $\sqrt{\mc{I}}$ is a coherent ideal, associated with $\sqrt{I}$.
\end{proposition}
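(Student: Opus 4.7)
The plan is to show that the presheaf $U \mapsto \sqrt{\mathcal{I}(U)}$ (on rational subsets $U \subseteq X$) is already the presheaf $U \mapsto \sqrt{I} \cdot \oo_X(U)$ whose sheafification is $\widetilde{\sqrt{I}}$. Since any ideal of a Tate ring formally of finite type over $\ok$ is closed and $R$ is Noetherian (being a quotient of $\ok[[X]]\langle Y_1, \ldots, Y_n\rangle[1/\vp]$), $\sqrt{I}$ is a finitely generated ideal of $R$ and $\widetilde{\sqrt{I}}$ is a coherent $\oo_X$-ideal, so this identification will finish the proof (and give $V(\mc{I}) = V(\sqrt{\mc{I}})$ by the discussion preceding the proposition).

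The whole thing reduces to the following algebraic claim: for any rational subset $U = \Spa(S) \subseteq \Spa(R)$, we have $\sqrt{I \cdot S} = \sqrt{I}\cdot S$ inside $S$. The inclusion $\sqrt{I} \cdot S \subseteq \sqrt{I\cdot S}$ is elementary: if $x \in \sqrt{I}$ then $x^n \in I$ for some $n$, so $(xs)^n = x^n s^n \in I\cdot S$ for any $s \in S$. The substantive inclusion is $\sqrt{I \cdot S} \subseteq \sqrt{I} \cdot S$, which amounts to showing that $\sqrt{I} \cdot S$ is a radical ideal of $S$, i.e.~that $S/\sqrt{I} \cdot S$ is reduced.

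For this, note that the quotient $R/\sqrt{I}$ is a reduced Tate ring formally of finite type over $\ok$, and the closed immersion $\Spa(R/\sqrt{I}) \hookrightarrow \Spa(R)$ together with the rational open $U \subseteq \Spa(R)$ produces a rational open $U \cap \Spa(R/\sqrt{I})$ of $\Spa(R/\sqrt{I})$ whose ring of sections is (by flatness of $R \to S$) exactly $S \otimes_R R/\sqrt{I} = S/\sqrt{I}\cdot S$. Applying Proposition \ref{ps2}(5) to $R/\sqrt{I} \to S/\sqrt{I}\cdot S$ shows that $S/\sqrt{I}\cdot S$ is reduced, as desired. The key (and only non-formal) input is thus the preservation of reducedness under rational localization, which is the main obstacle and is supplied by Proposition \ref{ps2}(5).

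Finally, flatness of $R \to \oo_X(U)$ gives $\sqrt{I} \cdot \oo_X(U) = \sqrt{I} \otimes_R \oo_X(U)$, so the presheaf $U \mapsto \sqrt{\mc{I}(U)} = \sqrt{I \cdot \oo_X(U)} = \sqrt{I}\cdot \oo_X(U)$ coincides on rational subsets with the defining presheaf of $\widetilde{\sqrt{I}}$. Sheafifying, we conclude $\sqrt{\mc{I}} = \widetilde{\sqrt{I}}$, which is coherent.
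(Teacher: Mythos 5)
Your proof is correct and, modulo the level of detail, is the same as the paper's: the paper cites the argument of \cite[\S 9.5.1 Proposition 2]{bgr} and checks the one nontrivial hypothesis, namely that $\sqrt{I}\,\oo_{X}(U)$ is a radical ideal in $\oo_{X}(U)$ for every rational $U$, which it does via Proposition \ref{ps2}(5) exactly as you do. You have simply unpacked the BGR citation (deducing $\sqrt{I\cdot S}=\sqrt{I}\cdot S$ from the radical ideal property and then matching presheaves via flatness), so the structure and the essential input are identical.
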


\begin{proof}
The argument in the proof of \cite[\S9.5.1 Proposition 2]{bgr} goes through, if we can verify that, for any rational subset $U\sub X$, $\sqrt{I}\oo_{X}(U)$ is a radical ideal in $\oo_{X}(U)$. But $\oo_{X}(U)/\sqrt{I}\oo_{X}(U)$ is a rational localization of the reduced Tate ring $R/\sqrt{I}$, so it is reduced by Proposition \ref{ps2}(5), which is what we wanted to prove.
\end{proof}

We also have the following familiar property of the radical.

\begin{proposition}\label{ps7}
Let $X$ be a pseudorigid space over $\ok$ and let $\mc{I}, \mc{J}$ be two coherent $\oo_{X}$-ideals with $V(\mc{I})=V(\mc{J})$. Then $\sqrt{\mc{I}}=\sqrt{\mc{J}}$.
\end{proposition}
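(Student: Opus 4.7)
The plan is to reduce to the affinoid case and then argue via the spectrum of maximal ideals, using the Jacobson property from Proposition~\ref{ps2}(1).

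First, observe that coherence and equality of coherent sheaves are local properties. Moreover, by Proposition~\ref{irr2} the radical of a coherent ideal $\mc{I}$ on an affinoid pseudorigid space $X = \Spa(R)$ attached to an ideal $I \subset R$ is the coherent ideal $\wt{\sqrt{I}}$, and by standard correspondence for coherent sheaves the equality $\wt{K_1} = \wt{K_2}$ implies $K_1 = K_2$. Therefore the claim reduces to proving: if $I,J\subset R$ are ideals with $V(\wt I) = V(\wt J)$ in $\Spa(R)$, then $\sqrt{I} = \sqrt{J}$ as ideals of $R$.

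Next, I would translate this into commutative algebra using the support map $\Spa(R)\to \Spec(R)$. By Proposition~\ref{zar2}, $V(\wt I)$ is precisely the preimage of the closed subscheme $V(I)\subset\Spec(R)$. Thus $V(\wt I) = V(\wt J)$ forces $V(I)$ and $V(J)$ to have the same preimage in $\Spa(R)$. By Lemma~\ref{ps4}, the support map sends $\Max(R)$ bijectively into $\Spa(R)$, so in particular its image contains every closed point of $\Spec(R)$. Intersecting with the image of $\Max(R)$ gives $V(I)\cap\Max(R) = V(J)\cap\Max(R)$.

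Finally, since $R$ is Jacobson by Proposition~\ref{ps2}(1), for any ideal $K\subset R$ one has $\sqrt{K} = \bigcap_{\mf{m}\in\Max(R),\,\mf{m}\supseteq K}\mf{m}$. Applying this to $I$ and $J$ and using the previous step yields $\sqrt{I} = \sqrt{J}$, completing the proof. The only step requiring any care is the passage from equality of adic closed subsets to equality of scheme-theoretic closed subsets of $\Spec(R)$; this is where the combination of Lemma~\ref{ps4} (every maximal ideal is hit) and the Jacobson property (closed subsets of $\Spec(R)$ are determined by their closed points) is essential, and I expect this to be the only nontrivial input.
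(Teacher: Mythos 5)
Your proposal is correct and follows essentially the same route as the paper: reduce to the affinoid case $X=\Spa(R)$, pass to maximal ideals via the embedding $\Max(R)\hookrightarrow\Spa(R)$, and conclude from the Jacobson property of $R$. The paper phrases the final step as density of $\Max(R/I)$ in $\Spec(R/I)$ rather than as the intersection formula $\sqrt{K}=\bigcap_{\mathfrak{m}\supseteq K}\mathfrak{m}$, but these are the same argument.
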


\begin{proof}
It suffices to prove this locally, so we may reduce to the case $X=\Spa(R)$ and $\mc{I}=\wt{I}$, $\mc{J}=\wt{J}$ with $I,J$ ideals of $R$. If $\Spa(R/I)=V(\mc{I})=V(\mc{J})=\Spa(R/J)$ as subsets of $\Spa(R)$, it follows that $\Max(R/I)=\Max(R/J)$ as subsets of $\Spec(R)$. Since $R/I$ and $R/J$ are Jacobson (by Proposition \ref{ps2}(1)), $\Max(R/I)$ is dense in $\Spec(R/I)$ and similarly for $R/J$, so we we deduce that $\Spec(R/I)=\Spec(R/J)$ and hence that $\sqrt{I}=\sqrt{J}$, which finishes the proof. 
\end{proof}

We formulate the upshot of the previous two propositions in the following 
Corollary.

\begin{corollary}
Let $X$ be a pseudorigid space over $\ok$. Then any Zariski closed subset $Z=V(\mc{I})$ of $X$ has a canonical structure of a reduced locally Noetherian adic space, with structure sheaf $\oo_{X}/\sqrt{\mc{I}}$. We call this the \emph{reduced structure} on $Z$.
\end{corollary}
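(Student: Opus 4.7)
The plan is to bootstrap the statement from the affinoid case established in Proposition \ref{irr2} together with the canonicity provided by Proposition \ref{ps7}, using Huber's construction of closed adic subsets from coherent ideals. The principal task is to upgrade Proposition \ref{irr2} from the affinoid situation to a statement about pseudorigid spaces in general, after which the remaining conclusions follow almost directly.

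First I would verify that for a pseudorigid space $X$ and a coherent $\oo_X$-ideal $\mc{I}$, the radical $\sqrt{\mc{I}}$ (defined as the sheafification of $U \mapsto \sqrt{\mc{I}(U)}$) is again a coherent $\oo_X$-ideal. Coherence being local, it suffices to work on an affinoid cover $X = \bigcup \Spa(R_i)$, and Proposition \ref{irr2} gives the statement locally: $\sqrt{\mc{I}}|_{\Spa(R_i)}$ is coherent, corresponding to the ideal $\sqrt{I_i}$ where $\mc{I}|_{\Spa(R_i)} = \wt{I_i}$. One needs to check compatibility under passage to rational subsets $\Spa(S) \sub \Spa(R_i)$: writing $\mc{I}|_{\Spa(S)} = \wt{I_i S}$, we must show $\sqrt{I_i S} = \sqrt{I_i} \cdot S$. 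The inclusion $\supseteq$ is formal, while for $\subseteq$ one uses that $S/\sqrt{I_i}S = S \otimes_R R/\sqrt{I_i}$ is a rational localization of the reduced Tate ring $R_i/\sqrt{I_i}$ (using flatness, as in Lemma \ref{zar1}), and hence is reduced by Proposition \ref{ps2}(5), so $\sqrt{I_i}\cdot S$ is already a radical ideal containing $I_i \cdot S$.

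Once coherence of $\sqrt{\mc{I}}$ is established, I would invoke Huber's construction \cite[(1.4.1)]{hu3}, already used in the body of the section, to endow $Z = V(\sqrt{\mc{I}}) = V(\mc{I})$ with the structure of a locally Noetherian adic space whose structure sheaf is the restriction of $\oo_X/\sqrt{\mc{I}}$. Locally this is $\Spa(R/\sqrt{I}, (R/\sqrt{I})^+)$, and since $R/\sqrt{I}$ is a reduced Tate ring formally of finite type over $\ok$, the resulting adic space is affinoid pseudorigid and reduced. Reducedness as an adic space (rather than merely at global sections) follows because every rational localization of $R/\sqrt{I}$ remains reduced by Proposition \ref{ps2}(5).

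Finally, the canonicity of this structure---the assertion that it depends only on the closed subset $Z$ and not on the choice of defining ideal $\mc{I}$---is immediate from Proposition \ref{ps7}: any two coherent ideals $\mc{I}, \mc{J}$ with $V(\mc{I}) = V(\mc{J}) = Z$ satisfy $\sqrt{\mc{I}} = \sqrt{\mc{J}}$, so the structure sheaf $\oo_X/\sqrt{\mc{I}}$ is intrinsic to $Z$. The main obstacle is really the gluing step in the first paragraph; everything else is formal once the radical behaves well under rational localization, and this behaviour is in turn a consequence of Proposition \ref{ps2}(5) combined with the flatness of rational localizations for Tate rings formally of finite type over $\ok$.
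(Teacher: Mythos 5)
Your proposal is correct and follows essentially the same route as the paper: coherence of $\sqrt{\mc{I}}$ is reduced to the affinoid case via Proposition~\ref{irr2} and the preceding discussion, and canonicity is Proposition~\ref{ps7}. The compatibility $\sqrt{I_iS}=\sqrt{I_i}\cdot S$ you verify by hand is exactly the key step in the proof of Proposition~\ref{irr2} (via Proposition~\ref{ps2}(5)), so your argument is a more explicit unwinding of the same ingredients rather than a different approach.
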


\begin{proof}
$\sqrt{\mc{I}}$ is coherent and $Z=V(\sqrt{\mc{I}})$ by Proposition \ref{irr2} and the discussion preceding it, and $\sqrt{\mc{I}}$ only depends on $Z$ (and not on the choice of $\mc{I}$) by Proposition \ref{ps7}.
\end{proof}

\subsection{Normalizations of pseudorigid spaces}
The goal of this subsection is to construct a theory of normalizations for pseudorigid spaces over $\oo_{K}$.

\begin{definition}\label{no1}
Let $X$ be a pseudorigid space over $\oo_{K}$. We say that $X$ is \emph{normal} if $X$ is locally of the form $\Spa(R)$, where $R$ is a normal Tate ring formally of finite type over $\ok$.
\end{definition}

This definition is well behaved, in the following sense.

\begin{lemma}\label{no2}
Let $X=\Spa(R)$ be an affinoid pseudorigid space over $\ok$.
\begin{enumerate}
\item If $R$ is normal and $U\sub X$ is a rational subset, then $\oo_{X}(U)$ is normal.

\item If $X$ is normal, then $R$ is normal.
\end{enumerate}
\end{lemma}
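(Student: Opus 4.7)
The plan is to reduce both parts to a comparison of completed local rings at maximal ideals, where the heavy lifting has already been done in Proposition \ref{ps2}. Two standard facts about commutative algebra will be used throughout: (a) for a Noetherian ring $A$, normality of $A$ is equivalent to normality of $A_{\mf{m}}$ for every maximal ideal $\mf{m}$ (since localization preserves normality, so $A_{\mf{p}}$ normal for every prime is equivalent to $A_{\mf{m}}$ normal for every maximal $\mf{m}$); and (b) for an excellent local ring $A$, normality of $A$ is equivalent to normality of its completion $\wh{A}$. Combined with Proposition \ref{ps2}(1), these let us test normality of any Tate ring formally of finite type over $\ok$ on completed local rings at maximal ideals.

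For (1), set $S = \oo_{X}(U)$ and let $f : R \to S$ be the structure map. For every maximal ideal $\mf{n}$ of $S$, Proposition \ref{ps2}(3) tells us that $f^{-1}(\mf{n})$ is maximal in $R$ and that the induced map $\wh{R_{f^{-1}(\mf{n})}} \to \wh{S_{\mf{n}}}$ is an isomorphism. Since $R$ is normal, the source is normal, hence so is the target, and thus $S$ is normal by the discussion above.

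For (2), choose an open affinoid cover $X = \bigcup_{i} \Spa(R_{i})$ by pseudorigid subspaces with each $R_{i}$ normal. Given a maximal ideal $\mf{m}$ of $R$, the associated point $x_{\mf{m}} \in X$ (Lemma \ref{ps4}) lies in some $\Spa(R_{i})$, where it corresponds to a maximal ideal $\mf{n}$ of $R_{i}$ whose preimage under $R \to R_{i}$ is $\mf{m}$. Proposition \ref{ps2}(3) again gives $\wh{R_{\mf{m}}} \cong \wh{(R_{i})_{\mf{n}}}$, which is normal by normality and excellence of $R_{i}$. Hence $R_{\mf{m}}$ is normal for every maximal $\mf{m}$, and $R$ is normal.

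The argument is essentially bookkeeping on top of Proposition \ref{ps2}; there is no serious obstacle beyond verifying that the classical equivalence between normality and normality of completed local rings at closed points transports, via the excellence and Jacobson properties established there, to the pseudorigid setting. This is directly parallel to the rigid analytic argument reviewed at the start of \cite[\S1.2]{con}.
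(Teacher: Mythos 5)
Your proposal is correct and follows essentially the same route as the paper: reduce normality to normality of local rings at maximal ideals, then transfer the question to completed local rings via Proposition \ref{ps2}(3) and use excellence (Proposition \ref{ps2}(1)) to descend. The only cosmetic difference is that you spell out the two underlying commutative-algebra facts explicitly, whereas the paper compresses them into ``standard arguments''; the invocation of the Jacobson property in your last sentence is not actually needed (excellence alone suffices), but this is harmless.
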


\begin{proof}
This follows from Proposition \ref{ps2} by standard arguments. We start with 
(1). Put $S=\oo_{X}(U)$ and let $\mf{n}$ be any maximal ideal of $S$. It 
suffices to prove that $S_{\mf{n}}$ is normal. Let $\mf{m}$ be the preimage of 
$\mf{n}$ in $R$. This is a maximal ideal and the map $R_{\mf{m}}\ra S_{\mf{n}}$ 
induces an isomorphism on completions. Since $R$ is normal and excellent, so is 
$R_{\mf{m}}$ and hence its completion. By excellence of $S_{\mf{n}}$, 
$S_{\mf{n}}$ is therefore normal as well, which finishes the proof. The same 
argument, but reversed, proves (2).
\end{proof}

We will need the following lemma, which is a partial generalization of \cite[Lemma 2.1.4]{con}.

\begin{lemma}\label{no4}
Let $X$ be a normal connected pseudorigid space over $\ok$ and let $Z\sub X$ be a Zariski closed subset. If $Z$ contains a nonempty open subset of $X$, then $Z=X$.
\end{lemma}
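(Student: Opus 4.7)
The plan is to let $\mc{I}$ denote the radical coherent $\oo_X$-ideal cutting out $Z$ with its reduced structure (provided by the corollary preceding Subsection 2.3), and consider the open locus
\[ W = \{ x \in X : \mc{I}_x = 0 \}, \]
which is open in the usual topology by coherence of $\mc{I}$. I would show $W$ is non-empty and closed, and invoke connectedness of $X$ to conclude $W = X$, so that $\mc{I} = 0$ and hence $Z = V(0) = X$ by Corollary \ref{zar3}(1).

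For non-emptiness, shrink a non-empty open subset of $X$ contained in $Z$ to a normal affinoid pseudorigid subset $\Spa(R) \subseteq Z$. Setting $J := \mc{I}(\Spa R) \subseteq R$, the ideal $J$ is radical (since $\mc{I}$ is, using Proposition \ref{irr2} and Proposition \ref{ps7}), and $V(J) = \Spa R$. Under the embedding $\Max(R) \hookrightarrow \Spa(R)$ of Lemma \ref{ps4}, every maximal ideal of $R$ contains $J$; combined with Jacobson-ness of $R$ (Proposition \ref{ps2}(1)) and the fact that $R$ is reduced (being normal), this forces $J = 0$. Hence $\Spa R \subseteq W$.

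For closedness, pick $x \in \overline{W}$ and choose a normal affinoid neighbourhood $\Spa R$ of $x$. Since $R$ is Noetherian and normal, a standard CRT argument (using that every localization $R_\mf{m}$ at a maximal ideal is a local normal Noetherian ring and hence a domain) shows that the minimal primes of $R$ are pairwise coprime, yielding a decomposition $R \cong \prod_{i=1}^n R_i$ into normal Noetherian domains. The corresponding orthogonal idempotents induce a disjoint union $\Spa R = \bigsqcup_i \Spa R_i$ into Zariski-clopen pieces (each $\Spa R_i$ is Zariski closed as the zero set of an idempotent ideal, and open as the complement of the union of the others). Let $j$ be such that $x \in \Spa R_j$; since this is an open neighbourhood of $x \in \overline{W}$, it meets $W$, so there is a non-empty rational subset $\Spa R' \subseteq \Spa R_j$ on which $\mc{I}$ vanishes, i.e., $I_j \cdot R' = 0$ where $I_j := \mc{I}(\Spa R_j)$. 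The map $R_j \to R'$ is injective: picking any maximal $\mf{n} \subseteq R'$ (exists since $R'\neq 0$), Proposition \ref{ps2}(3) identifies $\widehat{R_{j,\mf{n}\cap R_j}}$ with $\widehat{R'_{\mf{n}}}$, and the composition of $R_j \to R_{j, \mf{n}\cap R_j}$ (injective since $R_j$ is a domain) with the completion map (injective by Krull's intersection theorem) is injective. Thus $I_j = 0$, so $\Spa R_j \subseteq W$ and $x \in W$.

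The main subtlety lies in the closedness step: one needs to reduce to a normal \emph{domain} locally on $X$ in order to make sense of why a rational localization detects nonzero ideals, and this is where the product decomposition of a normal Noetherian pseudorigid algebra plus the completion-based injectivity of rational localizations (both inputs drawn from Proposition \ref{ps2}) carry the weight of the argument.
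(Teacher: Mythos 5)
Your proof is correct, and it uses the same two key technical inputs as the paper's proof: (i) the Jacobson-and-reduced property of pseudorigid affinoid algebras (Prop.~\ref{ps2}(1),(5)), to show that a coherent ideal vanishing identically on an affinoid is the zero ideal; and (ii) the injectivity of $R \to R'$ for $R$ a normal domain and $R'$ a rational localization, proved via the completion isomorphism of Prop.~\ref{ps2}(3) together with Krull's intersection theorem.

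The organization differs slightly. The paper first proves the statement for $X$ an affinoid pseudorigid space (there $X$ connected makes $R$ a normal domain immediately, so no product decomposition is needed), using both (i) and (ii) in a single chain of injections; it then bootstraps to the general case by the disjoint-open-cover ($\Sigma/\Delta$) trick. You instead work globally with the vanishing locus $W = \{x : \mc{I}_x = 0\}$, which is automatically open, and split (i) and (ii) across the non-emptiness and closedness steps respectively; connectedness then finishes the argument in one stroke. Both packagings are fine. Your closedness step passes through the product decomposition $R \cong \prod R_i$ into normal domains, which is correct but slightly heavier than necessary --- you could just as well shrink to a connected normal affinoid neighbourhood of $x$ (connected components of an affinoid pseudorigid space are clopen and affinoid, cut out by the finitely many idempotents of the Noetherian ring $R$), which is closer to what the paper does in the affinoid case. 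A small over-statement: you do not actually need $\mc{I}$ to be the radical ideal, nor $J$ to be radical, for the non-emptiness step --- $J \sub \bigcap_{\mf{m}} \mf{m} = \sqrt{0} = 0$ works for any coherent ideal cutting out $Z$. Neither of these affects correctness.
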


\begin{proof}
Let us first assume that $X=\Spa(R)$ is affinoid pseudorigid, in which case $R$ is normal by Lemma \ref{no2}. Since $X$ is connected, $R$ is in fact a normal domain. Let $Z=V(\wt{I})$ for some ideal $I$ of $R$ and assume that it contains an open subset $U\sub X$, which we may assume to be a connected rational subset. Set $S=\oo_{X}(U)$; $S$ is a normal domain. Pick $f\in I$, we want to show that $f=0$. Since $f$ vanishes on $Z$, it must map to $0$ in $S$, so it suffices to prove that the map $R \ra S$ is injective. Pick any maximal ideal $\mf{n}$ of $S$ and let $\mf{m}$ be its preimage in $R$. Composing $R \ra S$ with the natural map $S \ra \wh{S_{\mf{n}}}$, it suffices to prove that $R \ra \wh{S_{\mf{n}}}$ is injective. This morphism factors as
$$ R \ra R_{\mf{m}} \ra \wh{R_{\mf{m}}} \ra \wh{S_{\mf{n}}}$$
so it suffices to prove that these three maps are injective. The first is injective since $R$ is a domain, the second is injective by Krull's intersection theorem, and the third is an isomorphism by Proposition \ref{ps2}(3). This finishes the proof in the affinoid case.

\medskip

We now do the general case. Let $\mc{I}$ be a coherent ideal such that $Z=V(\mc{I})$; we wish to show that $\mc{I}=0$. Define $\Sigma$ to be the set of open nonempty affinoid pseudorigid subspaces $V\sub X$ such that $\mc{I}|_{V}=0$, and define $\Delta$ to be the set of open nonempty affinoid pseudorigid subspaces $W\sub X$ such that $\mc{I}|_{W}\neq 0$. Clearly $\Sigma \cap \Delta = \emptyset$, and we claim that if $V\in \Sigma$ and $W\in \Delta$ then $V\cap W = \emptyset$. Assume not, and let $G\sub V \cap W$ be nonempty open affinoid pseudorigid. Since $G\sub V$ we must have $\mc{I}|_{G}=0$. But this means that $Z\cap W$, which is Zariski closed in $W$ but not equal to $W$, contains a nonempty open subset $G\sub W$, which contradicts the Lemma in the affinoid case. It follows that $V \cap W =\emptyset$. Now set $U_{\Sigma}=\bigcup_{V\in \Sigma}V$ and $U_{\Delta}=\bigcup_{W\in \Delta}W$. These are disjoint open subsets and $X=U_{\Sigma}\cup U_{\Delta}$. By assumption we know that $U_{\Sigma}\neq \emptyset$, so by connectedness of $X$ we must have $\Delta=\emptyset$, and hence $\mc{I}=0$ and $Z=X$.
\end{proof}

Note that Lemma \ref{no4} implies that a normal connected quasicompact pseudorigid space $X$ is irreducible. Indeed, normality implies that it is reduced, and if we write $X$ as union $X=Z_{1}\cup Z_{2}$ of two Zariski closed subsets with $Z_{1}\neq X$, then $Z_{2}$ contains the nonempty open subset $X\setminus Z_{1}$ and therefore has to equal $X$.

\medskip

Let $R$ be a Tate ring formally of finite type over $\ok$. We denote by $\wt{R}$ the \emph{normalization} of $R$, i.e. the integral closure of $R$ in its total ring of fractions. Since $R$ is excellent, $\wt{R}$ is finite over $R$ and hence a Tate ring formally of finite type over $\ok$. If $X=\Spa(R)$, we put $\wt{X}=\Spa(\wt{R})$ and call it, together with its canonical map $p:\wt{X} \ra X$, the \emph{normalization of $X$}. The morphism $p : \wt{X} \ra X$ is finite and surjective (to see that it is surjective, use for example that $p$ is closed since it is finite, and maps $\Max(\wt{X})$ onto $\Max(X)$). Next we show this construction glues, for which we need to verify that it commutes with rational localization.

\begin{lemma}\label{no6}
Let $R$ be a Tate ring formally of finite type and $S$ a rational localization of $R$. Then there is a natural isomorphism $\wt{R}\otimes_{R}S \cong \wt{S}$, which is unique over $S$.
\end{lemma}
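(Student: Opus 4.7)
The plan is to geometrically identify $\wt{R}\otimes_R S$ with sections of $\OO_{\wt{X}}$ over a rational subset of $\wt{X} = \Spa(\wt{R})$, deduce its normality from Lemma \ref{no2}(1), embed it into $\wt{S}$ as a finite $S$-subalgebra of the total ring of fractions $T(S)$, and then verify equality by a completion-based local check at maximal ideals using Proposition \ref{ps2}(3) together with excellence.

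First, $\wt{R}$ is finite over $R$ by excellence of $R$ (Proposition \ref{ps2}(1)), and rational localization $R\to S$ is flat (as already used in Lemma \ref{zar1}). Hence $\wt{R}\otimes_R S$ is a finite $S$-algebra, and $S = R\otimes_R S \hookrightarrow \wt{R}\otimes_R S$. Setting $X = \Spa(R)$ and letting $p:\wt{X}\to X$ be the (finite) normalization morphism, let $U\subseteq X$ be the rational subset with $\OO_X(U) = S$. The preimage $\wt{U} := p^{-1}(U) \subseteq \wt{X}$ is rational, and since $p$ is finite, the pushforward $p_{\ast}\OO_{\wt{X}}$ is the coherent $\OO_X$-module attached to the finite $R$-module $\wt{R}$, giving $\OO_{\wt{X}}(\wt{U}) = \wt{R}\otimes_R S$. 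Lemma \ref{no2}(1) applied to the normal ring $\wt{R}$ then shows that $\wt{R}\otimes_R S$ is normal.

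Next I construct the natural $S$-algebra map $\wt{R}\otimes_R S \to \wt{S}$. Write $T(-)$ for total ring of fractions and let $M_R \subseteq R$, $M_S \subseteq S$ denote the non-zero-divisors. Flatness of $R\to S$ sends $M_R$ into $M_S$, and the induced map $S[M_R^{-1}]\to S[M_S^{-1}]$ is injective (if $sa = 0$ in $S$ for some $s\in M_S$, then $a = 0$ since $s$ is a non-zero-divisor), yielding $T(R)\otimes_R S\hookrightarrow T(S)$. Tensoring $\wt{R}\hookrightarrow T(R)$ with the flat $R$-algebra $S$ and composing gives $\wt{R}\otimes_R S\hookrightarrow T(S)$. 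The image is a finite normal $S$-subalgebra of $T(S)$, hence integral over $S$, and so it is contained in the integral closure $\wt{S}$ of $S$ in $T(S)$.

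To verify this inclusion is an equality, it suffices to show the finite $S$-module cokernel vanishes after base change to $\wh{S_{\mf{n}}}$ for every maximal ideal $\mf{n}\subset S$. Let $\mf{m}\subset R$ be its (maximal) preimage; by Proposition \ref{ps2}(3), $\wh{R_{\mf{m}}}\cong\wh{S_{\mf{n}}}$, and excellence of $R_{\mf{m}}$ and $S_{\mf{n}}$ ensures that normalization commutes with completion. Combined with the fact that normalization commutes with localization (and with finite base change on the coefficient side), we obtain
$$(\wt{R}\otimes_R S)\otimes_S\wh{S_{\mf{n}}} = \wt{\wh{R_{\mf{m}}}} = \wt{\wh{S_{\mf{n}}}} = \wt{S}\otimes_S\wh{S_{\mf{n}}}.$$
Uniqueness of the isomorphism over $S$ is automatic, since both rings are identified with $S$-subalgebras of $T(S)$. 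The main subtlety is the geometric step identifying $\OO_{\wt{X}}(\wt{U})$ with $\wt{R}\otimes_R S$ and the resulting normality, together with the injectivity of $T(R)\otimes_R S\hookrightarrow T(S)$; once these are in hand the completion-based comparison at maximal ideals is routine.
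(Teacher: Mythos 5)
Your approach is genuinely different from the paper's. The paper first reduces to the reduced case (by noting that the nilradical $I$ of $R$ satisfies $\wt{R}\otimes_R S\cong\wt{R}\otimes_{R/I}S/IS$ and that $IS$ is the nilradical of $S$, using the radical theory of coherent ideals developed earlier), and then directly verifies the hypotheses of Conrad's criterion \cite[Theorem~1.2.2]{con} for when a flat base change of a normalization is again a normalization: Japaneseness from excellence, flatness, normality of $\wt{R}\otimes_R S$ via Lemma~\ref{no2}, and reducedness of $S/\mf{p}S$ for minimal primes $\mf{p}$ via Proposition~\ref{ps2}(5). Your proof instead constructs an explicit embedding of $\wt{R}\otimes_R S$ into $\wt{S}$ through total rings of fractions and then tries to force equality by completing at maximal points. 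That is a reasonable alternative strategy, but it has a real gap.

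The gap is the non-reduced case. The normalization $\wt{R}$ here is the normalization of $R_{\mathrm{red}}$ (the paper's own proof says explicitly that $\Ker(R\to\wt R)$ is the nilradical of $R$, and $\Spa(\wt R)$ is meant to be a normal, in particular reduced, space). When $R$ has nilpotents, there is therefore no inclusion $\wt R\hookrightarrow T(R)$ of $R$-algebras: the natural map $R\to T(R)$ is injective, whereas $R\to\wt R$ kills the nilradical. So your construction of the map $\wt{R}\otimes_R S\hookrightarrow T(R)\otimes_R S\hookrightarrow T(S)$, which is the crux of the argument, simply does not start. You need the preliminary reduction to $R$ reduced, and for that you also need the fact (nontrivial, proved in the paper via the radical sheaf theory) that $IS$ is the nilradical of $S$ when $I$ is the nilradical of $R$ — otherwise you cannot relate $\wt{R}\otimes_R S$ to the normalization of $S_{\mathrm{red}}$. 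A secondary, more minor point: in the final completion step you show the two sides become abstractly isomorphic to $\wt{\wh{S_{\mf n}}}$, but to conclude the cokernel vanishes you should check that the \emph{given} injection becomes an isomorphism after $\otimes_S\wh{S_{\mf n}}$. This can be repaired (both images are the integral closure of $\wh{S_{\mf n}}$ inside $T(\wh{S_{\mf n}})$, and the map is compatible with those embeddings, so equality follows from uniqueness of the integral closure as a subring), but as written it is not quite complete. Fixing the reduced reduction is the essential missing step; the completion argument is then salvageable. In contrast the paper's appeal to Conrad's criterion sidesteps all of this and is both shorter and cleaner, at the cost of invoking an external result.
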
 

\begin{proof}
Let $I$ be the nilradical of $R$ (which is also the kernel of $R\ra \wt{R}$). 
By the theory of the radical of coherent $\oo_{\Spa(R)}$-ideals developed 
above, $IS$ is the nilradical of $S$. One then checks that $\wt{R}\otimes_{R}S 
\cong \wt{R} \otimes_{R/I} S/IS$ and that $\wt{S/IS}=\wt{S}$, so we may reduce 
to the case when $R$, and hence $S$, is reduced.

\medskip

It then suffices to show that $S \ra \wt{R}\otimes_{R}S$ is a normalization, 
since a normalization of $S$ is unique up to unique isomorphism over $S$. To do 
this we will verify that conditions of \cite[Theorem 1.2.2]{con} are satisfied. 
$R$ is Japanese since it is excellent, and $S$ is flat over $R$ since it is a 
rational localization of $R$. Note that $\wt{R}\otimes_{R}S$ is normal by Lemma 
\ref{no2} since it is a rational localization of $\wt{R}$. The last thing to 
verify is that if $\mf{p}$ is a minimal prime of $R$, then $S/\mf{p}S$ is 
reduced, which follows from Proposition \ref{ps2}(5) since it is a rational 
localization of $R/\mf{p}$.
\end{proof}

We may then globalize the construction.

\begin{definition}
Let $X$ be a pseudorigid space over $\ok$. Then we may construct the \emph{normalization $\wt{X} \ra X$} by gluing together the normalizations of the open affinoid pseudorigid subspaces of $X$. The canonical map $p : \wt{X} \ra X$ is finite and surjective. 
\end{definition}

Using normalizations, we can deduce the following strengthening of Lemma \ref{no4}.

\begin{lemma}\label{no7}
Let $X$ be an irreducible pseudorigid space over $\ok$. Then $\wt{X}$ is connected. Let $Z\sub X$ be a Zariski closed subset. If $Z$ contains a nonempty open subset of $X$, then $Z=X$.
\end{lemma}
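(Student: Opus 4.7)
The plan is to prove connectedness of $\wt{X}$ first, and then deduce the second statement from Lemma~\ref{no4} by pullback along the finite surjective normalization $p : \wt{X} \ra X$.

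For connectedness, I would argue by contradiction. Replacing $X$ by its reduction affects neither the underlying topological space (hence neither irreducibility nor the Zariski topology) nor the topological space of $\wt{X}$, so assume $X$ is reduced. Suppose $\wt{X} = A \sqcup B$ with $A$, $B$ disjoint nonempty open. For each affinoid open $U = \Spa(R) \sub X$, reducedness of $R$ together with excellence gives a product decomposition $\wt{R} = \prod_{\p} \wt{R/\p}$ into normal Tate domains indexed by the minimal primes of $R$; hence $p^{-1}(U) = \Spa(\wt{R})$ is the disjoint union of the affinoid subspaces $\Spa(\wt{R/\p})$. Each of these is normal and irreducible, and therefore connected, so it lies entirely in $A$ or entirely in $B$, partitioning the minimal primes of $R$ into two sets $J^{A}(U)$ and $J^{B}(U)$. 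Setting $Y_{U}^{A} = \bigcup_{\p \in J^{A}(U)} V(\p)$ and $Y_{U}^{B} = \bigcup_{\p \in J^{B}(U)} V(\p)$ produces a decomposition $U = Y_{U}^{A} \cup Y_{U}^{B}$ into Zariski closed subsets. By Lemma~\ref{no6} (normalization commutes with rational localization), these local decompositions are compatible on intersections and glue to Zariski closed subsets $Y^{A}, Y^{B} \sub X$ with $X = Y^{A} \cup Y^{B}$. Irreducibility of $X$ then forces one of them, say $Y^{A}$, to equal $X$; but this means $J^{B}(U) = \emptyset$ on every affinoid $U$, so $\wt{X} \sub A$ and $B = \emptyset$, the desired contradiction.

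For the second statement, $\wt{X}$ is normal by construction and connected by the first part, so Lemma~\ref{no4} applies. Given $Z \sub X$ Zariski closed containing a nonempty open $U$, the preimage $p^{-1}(Z) \sub \wt{X}$ is Zariski closed by Corollary~\ref{coh2} and contains the nonempty open $p^{-1}(U)$ (which is nonempty by surjectivity of $p$). Lemma~\ref{no4} yields $p^{-1}(Z) = \wt{X}$, and surjectivity of $p$ then gives $Z = p(p^{-1}(Z)) = p(\wt{X}) = X$. The main technical point is the compatibility of the local decompositions $U = Y_{U}^{A} \cup Y_{U}^{B}$ across intersecting affinoids in the first part, which is precisely the content of Lemma~\ref{no6}; once this is granted, the contradiction is immediate from the definition of irreducibility.
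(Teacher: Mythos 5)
Your proof is correct, and the second half (deducing the density statement from Lemma~\ref{no4} via the finite surjection $p$) is exactly the paper's argument. For the connectedness of $\wt{X}$, your route differs from the paper's in an interesting but closely related way. The paper fixes a clopen decomposition $\wt{X} = U \sqcup V$, shows that $U$, $V$ are Zariski closed, so $X = p(U) \cup p(V)$ and irreducibility gives, say, $X = p(U)$; it then isolates the key technical content in the claim that $p^{-1}(p(U)) - U$ is \emph{nowhere dense} (proved locally by reducing to the Jacobson scheme $\Spec(\wt{R}) \to \Spec(R)$ and observing that this set is a union of intersections of distinct irreducible components), which forces $V = \emptyset$. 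You instead make the local picture explicit: after harmlessly passing to the reduction, you decompose each $p^{-1}(U) = \Spa(\wt R) = \coprod_{\p}\Spa(\wt{R/\p})$ into normal Tate domain pieces, observe each piece is connected and hence lands entirely in $A$ or $B$, and glue the images $Y_U^A,Y_U^B$ to a Zariski closed decomposition $X = Y^A \cup Y^B$ — which, unraveled, is the same decomposition $X = p(A) \cup p(B)$. Irreducibility again gives $X = Y^A$, and you conclude $J^B(U) = \emptyset$ and hence $B=\emptyset$. What the paper's formulation buys is that the whole argument sits behind one cleanly stated ``nowhere dense'' claim; what yours buys is a concrete description of the pieces. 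Two places in your write-up are slightly under-justified but sound: the gluing step uses, beyond Lemma~\ref{no6}, the flatness/going-down correspondence between minimal primes of $R$ and of its rational localizations (and reducedness of $R'/\p R'$ from Prop.~\ref{ps2}(5)); and the final step ``$Y^A=X$ implies $J^B(U)=\emptyset$'' uses prime avoidance — a minimal prime $\q$ of $R$ cannot have $V(\q)\sub\bigcup_{\p\neq\q}V(\p)$, which one sees by passing to $\Max(R)$ and using that $R$ is Jacobson. Both points are routine and don't affect the correctness of the argument.
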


\begin{proof}
	Let $p: \wt{X} \ra 
	X$ be the normalization. 
	Assume that $\wt{X} = U \coprod V$ with $U, V$ open. We claim that  
	$p^{-1}(p(U))-U$ is nowhere dense in $\wt{X}$
	(i.e.~contains no nonempty open subset). For now we assume the claim. 
	Observe that $U$ and $V$ are 
	Zariski open and closed. Since $X = p(U)\cup p(V)$ and $X$ is irreducible 
	we have $X = p(U)$ or $X = p(V)$. We may as well assume $X = p(U)$. Since 
	$p^{-1}(p(U))-U = \wt{X} - U$ is nowhere dense, we deduce that $V$ is 
	empty. We now check the claim. It suffices to do this locally on $X$, so we 
	assume that $X=\Spa(R)$ is an affinoid pseudorigid space (not necessarily 
	irreducible). $U$ is a union of connected (hence irreducible) components of 
	$\wt{X} = \Spa(\wt{R})$. Taking intersections with $\Max(\wt{R})$ reduces 
	the claim to the same statement for the map of Jacobson schemes $p: 
	\Spec(\wt{R})\rightarrow \Spec(R)$, where the claim follows from the fact 
	that $p^{-1}(p(U))-U$ is a union of intersections of distinct irreducible 
	components.

\medskip
To show the second part, we consider $\wt{U}\sub p^{-1}(Z)\sub \wt{X}$. By the first part and Lemma \ref{no4}, $p^{-1}(Z)=\wt{X}$, and hence $Z=X$.
\end{proof}

When $X$ is quasicompact, the normalization relates to the irreducible components in the following way.

\begin{proposition}\label{irr1}
Let $X$ be a quasicompact pseudorigid space over $\ok$. Then the irreducible components of $X$ are exactly the images of the connected components of $\wt{X}$. 
\end{proposition}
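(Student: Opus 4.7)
Since $X$ is quasicompact and $p:\wt{X}\to X$ is finite, $\wt{X}$ is also quasicompact. Both spaces are Noetherian for the Zariski topology (being quasicompact and locally Noetherian for the Zariski topology, as established earlier in Section~2.1). In particular $\wt{X}$ has only finitely many connected components $C_{1},\dots,C_{n}$, each of which is clopen. Each $C_{i}$ inherits the structure of a normal, quasicompact, connected pseudorigid space, and is therefore irreducible by the remark following Lemma~\ref{no4}. Each image $p(C_{i})$ is Zariski closed in $X$ by Corollary~\ref{coh2} (since $p$ is finite), and irreducible as the continuous image of an irreducible space; by surjectivity of $p$ we have $X = \bigcup_{i=1}^{n} p(C_{i})$.

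A standard topological argument now shows that the $p(C_{i})$ are exactly the irreducible components of $X$ once one knows the decomposition is irredundant, i.e.~$p(C_{i})\not\subseteq p(C_{j})$ for $i\ne j$. Indeed, any irreducible component $Y$ satisfies $Y = \bigcup_{i}(Y\cap p(C_{i}))$, so $Y\subseteq p(C_{i})$ for some $i$ by irreducibility, and then $Y = p(C_{i})$ by maximality; irredundancy ensures the $p(C_{i})$ are themselves mutually maximal.

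The plan for irredundancy is to reduce to the affine case. For any open affinoid pseudorigid $U=\Spa(R)\subseteq X$, the gluing construction of the normalization (via Lemma~\ref{no6}) gives $p^{-1}(U)=\wt{U}=\Spa(\wt{R})$. Since $\wt{R}$ is normal and Noetherian, it decomposes as a direct product $\wt{R}=\prod_{\alpha} \wt{R/\mathfrak{p}_{\alpha}}$ of normal domains indexed by the minimal primes $\mathfrak{p}_{\alpha}$ of $R_{\mathrm{red}}$. Geometrically, $\wt{U}=\bigsqcup_{\alpha}\Spa(\wt{R/\mathfrak{p}_{\alpha}})$ and these are exactly the connected components of $\wt{U}$, each mapping onto an irreducible component $V(\mathfrak{p}_{\alpha})$ of $U$. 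Because $C_{i}$ is clopen in $\wt{X}$, $C_{i}\cap \wt{U}$ is clopen in $\wt{U}$, hence equals $\bigsqcup_{\alpha\in I_{i}}\Spa(\wt{R/\mathfrak{p}_{\alpha}})$ for some index set $I_{i}$, and so $p(C_{i})\cap U = \bigcup_{\alpha\in I_{i}}V(\mathfrak{p}_{\alpha})$ is a union of irreducible components of $U$. Since the $C_{i}$ are pairwise disjoint, so are the $I_{i}$.

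Finally, if $p(C_{i})\subseteq p(C_{j})$ for some $i\ne j$, intersecting with each $U$ gives $\bigcup_{\alpha\in I_{i}}V(\mathfrak{p}_{\alpha})\subseteq \bigcup_{\beta\in I_{j}}V(\mathfrak{p}_{\beta})$; by irreducibility and maximality of the $V(\mathfrak{p}_{\alpha})$ as irreducible components of $U$, every $\alpha\in I_{i}$ must lie in $I_{j}$, which forces $I_{i}=\emptyset$. Covering $X$ by finitely many open affinoids then gives $p(C_{i})=\emptyset$, contradicting $C_{i}\ne\emptyset$. The main obstacle in the proof is precisely this local-to-global irredundancy step: while the affine case reduces to the algebraic fact that the normalization of a reduced Noetherian ring is a product of normal domains indexed by minimal primes, the global irredundancy requires carefully comparing the clopen decomposition of $\wt{X}$ with the affine-local product decompositions.
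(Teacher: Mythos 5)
Your proposal is correct and follows essentially the same route as the paper: take the images $p(C_i)$ of the (finitely many) connected components of $\wt X$, show they are Zariski closed (Corollary~\ref{coh2}) and irreducible (Lemma~\ref{no4} and the remark after it), observe they cover $X$, and reduce the irredundancy of the decomposition to the affinoid case. The only difference is cosmetic: the paper dispatches irredundancy with a one-line reference to ``the analogous statement for spectra of maximal ideals,'' whereas you flesh this out via the product decomposition $\wt R=\prod_{\alpha}\wt{R/\mathfrak p_\alpha}$ over minimal primes, and you obtain irreducibility of $p(C_i)$ by citing continuity of $p$ for the Zariski topologies rather than pulling back a covering of $p(C_i)$ by two closed sets to $C_i$ directly.
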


\begin{proof}
Let $\wt{X}_{1},...,\wt{X}_{r}$ denote the connected components of $\wt{X}$ and let $X_{i}=p(\wt{X}_{i})$. By Corollary \ref{coh2} the $X_{i}$ are Zariski closed, and they cover $X$ since $p$ is surjective. If $Z_{1},Z_{2}\sub X$ are Zariski closed subsets such that $X_{i}\sub Z_{1}\cup Z_{2}$, then, considering the map $\wt{X}_{i} \ra X$, we may take their preimages to get Zariski closed subsets ${Z}^{\prime}_{1},Z^{\prime}_{2}\sub \wt{X}_{i}$ with $\wt{X}_{i}=Z_{1}^{\prime}\cup Z_{2}^{\prime}$. It follows from Lemma \ref{no4} that $\wt{X}_{i}=Z_{j}^{\prime}$ for some $j\in \{1,2\}$, and hence that $X_{i}\sub Z_{j}$, so $X_{i}$ is irreducible. It remains to show that $X_{i}\sub X_{k}$ only if $i=k$. If $U\sub X$ is open quasicompact, then $\wt{X}_{i}\cap \wt{U}$ and $\wt{X}_{k}\cap \wt{U}$ are closed and open subsets of $\wt{U}$, so one sees that it suffices to verify this statement locally on $X$. We may therefore reduce to the case of an affinoid pseudorigid space, where it follows from the analogous statement for spectra of maximal ideals.
\end{proof}

For non-quasicompact spaces we do not have a decomposition into irreducible components a priori, but we will now show that Proposition \ref{irr1} holds in this case as well. Recall that we have defined irreducible components as maximal irreducible Zariski closed sets (with respect to inclusion).

\begin{proposition}\label{irr2}
Let $X$ be a pseudorigid space over $\ok$ with normalization $p:\wt{X} \ra X$. 
Let $(\wt{X}_{i})_{i\in I}$ be the set of connected components of $\wt{X}$ 
(here $I$ is some index set) and let $X_{i}=p(\wt{X}_{i})$. Then the $X_{i}$ 
are exactly the irreducible components of $X$, and their union is $X$.
\end{proposition}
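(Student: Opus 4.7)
The plan is to establish four claims which jointly yield the proposition: (i) each $X_i$ is irreducible Zariski closed in $X$; (ii) $\bigcup_i X_i = X$; (iii) any irreducible Zariski closed $Y \subseteq X$ is contained in some $X_i$; and (iv) $X_i \subseteq X_k$ forces $i=k$. Each proof proceeds by a local reduction to quasicompact affinoid subspaces, where Proposition~\ref{irr1} applies.

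For (i), I would first show $\wt{X}_i$ is Zariski closed in $\wt{X}$: on any affinoid $\wt{U}=\Spa(\wt{S})$ the decomposition $\wt{S}=\prod_{j}\wt{S/\mf{p}_j}$ over the minimal primes $\mf{p}_j$ of $S$ exhibits the connected components of $\wt{U}$ as the factors $\Spa(\wt{S/\mf{p}_j})$ (each connected, by applying Lemma~\ref{no7} to the irreducible $\Spa(S/\mf{p}_j)$), and these are individually Zariski closed as they are cut out by idempotents; hence $\wt{X}_i\cap\wt{U}$ is a finite union of such components, and therefore Zariski closed. Corollary~\ref{coh2} together with the fact that $p$ is finite then gives that $X_i=p(\wt{X}_i)$ is Zariski closed in $X$. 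As $\wt{X}_i$ is normal and connected, Lemma~\ref{no4} yields its irreducibility; pulling back a decomposition $X_i\subseteq Z_1\cup Z_2$ along $p$ transfers irreducibility to $X_i$. Claim (ii) is immediate from surjectivity of $p$.

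The key new step is (iii). Pick any quasicompact open $U\subseteq X$ meeting $Y$. By the affinoid analysis just described, the connected components of $\wt{U}$ biject with the (finitely many, by Proposition~\ref{irr1}) irreducible components of $U$, and each such connected component lies in a unique $\wt{X}_i$; hence only finitely many $X_{i_1},\ldots,X_{i_m}$ meet $U$, and $Y\cap U\subseteq X_{i_1}\cup\cdots\cup X_{i_m}$. The essential input is now Lemma~\ref{no7}: the nonempty open subset $Y\cap U$ of the irreducible space $Y$ is Zariski dense in $Y$, so $Y=\bigcup_{j}(Y\cap X_{i_j})$, and irreducibility of $Y$ forces $Y\subseteq X_{i_j}$ for some $j$. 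For (iv), if $X_i\subseteq X_k$ with $i\neq k$, then on every affinoid $U$ the inclusion $X_i\cap U\subseteq X_k\cap U$ combined with the incomparability of minimal primes of $S$ forces any irreducible component $\Spa(S/\mf{p}_j)$ of $U$ lying in $X_i\cap U$ to correspond to a connected component of $\wt{U}$ sitting simultaneously in $\wt{X}_i$ and $\wt{X}_k$, contradicting their disjointness. Hence $X_i\cap U=\emptyset$ for every affinoid $U$, so $X_i=\emptyset$, contradicting nonemptiness of $\wt{X}_i$. I expect (iii) to be the main obstacle: non-quasicompactness of $X$ precludes a direct finite covering argument, and the Zariski density provided by Lemma~\ref{no7} is the essential bridge beyond the quasicompact case.
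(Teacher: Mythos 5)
Your proof is correct and essentially matches the paper's: claims (i), (ii), (iv) repeat the argument from the quasicompact case, while (iii) is exactly the paper's maximality argument (pick a quasicompact open $U$ meeting $Y$, observe that only finitely many $X_i$ meet $U$, then apply Lemma~\ref{no7} and irreducibility of $Y$). The only cosmetic difference is in how the finiteness is established: you route through the quasicompact proposition and the bijection between connected components of $\wt{U}$ and irreducible components of $U$, whereas the paper directly uses quasicompactness of $p$ so that $\wt{U}=p^{-1}(U)$ is covered by finitely many of the open sets $\wt{X}_i$.
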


\begin{proof}
That the $X_{i}$ are irreducible, distinct and cover $X$ follows as in the 
proof of Proposition \ref{irr1}; it remains to show that they are maximal 
irreducible sets. Let $Z\sub X$ be a nonempty irreducible Zariski closed set. 
Let $U\sub X$ be a quasicompact open set with intersects $Z$. By 
quasicompactness of $p$ we can find a finite set $S\subset I$ such that 
$\wt{U}\sub \bigcup_{i\in S}\wt{X}_{i}$, and hence $U \sub \bigcup_{i\in 
S}X_{i}$. Since $U\cap Z\sub Z$ is open and contained in the Zariski closed 
subset $\bigcup_{i\in S}Z\cap X_{i}$ of the irreducible set $Z$, we must have 
$Z\sub X_{i}$ for some $i\in S$, as desired.
\end{proof}

\subsection{Dimension theory}

If $X=\Spa(A,A^{+})$ is an affinoid adic space, then it is natural to define its dimension as the Krull dimension of the spectral space $X$. This definition globalizes in a natural way. However, for our purposes it will be more convenient to use a more algebraic definition for the dimension of a pseudorigid space. Before we define the dimension we make an ad hoc definition which is somewhat overdue.

\begin{definition}
Let $X$ be a pseudorigid space over $\ok$ and let $x\in \Max(X)$. Then we 
define the \emph{completed local ring $\wh{\oo}_{X,x}$ of $X$ at $x$} to be the ring 
$\wh{A_{\mf{m}}}$, where $U=\Spa(A) \sub X$ is any open affinoid pseudorigid 
space containing $x$ and $\mf{m}$ is the maximal ideal in $A$ corresponding to 
$x$. Note that this is independent of the choice of $U$ by Proposition 
\ref{ps2}(3), hence well defined.
\end{definition}

In this section we will freely make use of the fact that a Noetherian local ring has the same Krull dimension as its completion.

\begin{definition}
Let $X$ be a pseudorigid space over $\ok$. We define the \emph{dimension} of $X$ to be 
$$ \dim X=\sup_{x\in \Max(X)} \dim \wh{\oo}_{X,x} $$
(taken to be $+\infty$ if the supremum does not exist). We say that $X$ is \emph{equidimensional} if $\dim \wh{\oo}_{X,x}$ is independent of $x\in \Max(X)$.
\end{definition}

A few remarks are in order. First, we could also have defined $\dim X$ using open affinoid pseudorigid spaces; one has
$$ \dim X = \sup_{U\sub X} \dim \oo_{X}(U), $$
where $U$ runs through the open affinoid pseudorigid subspaces of $X$. When 
$X=\Spa(R)$ is an affinoid pseudorigid space, we have $\dim X=\dim R$. Second, 
we would expect this definition to agree with the Krull dimension of the 
locally spectral space $X$ (for rigid analytic varieties this was proved by 
Huber, cf. \cite[Lemma 1.8.6, Proposition 1.8.11]{hu3}), but we have not tried 
to prove this and we will not need it\footnote{This has now been proved to be true by Louren\c{c}o, see \cite[Corollary 4.13]{lou}.}.

\medskip

The main result we will need on dimensions is that if $X$ is irreducible, then $X$ is equidimensional.

\begin{lemma}\label{dim1}
Let $\mf{m}$ be a maximal ideal of $A=\ok[[X_{1},...,X_{m}]]\langle T_{1},...,T_{n} \rangle$. Then $\dim \wh{A_{\mf{m}}}= m+n+1$.
\end{lemma}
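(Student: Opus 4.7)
The plan is to identify every maximal ideal of $A$ precisely, then bound $\dim \widehat{A_\mathfrak{m}}$ above via Krull's generalized principal ideal theorem and below via an explicit chain of primes.

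First I would verify that $A$ is $(\pi_K, X_1, \ldots, X_m)$-adically complete: reducing modulo $(\pi_K, X_1, \ldots, X_m)^r$, the coefficient ring $\ok[[X_1, \ldots, X_m]]/(\pi_K, X_1, \ldots, X_m)^r$ becomes Artinian, so restricted power series in $T$ with values there collapse to ordinary polynomials, and the inverse limit reconstructs $A$. Consequently $(\pi_K, X_1, \ldots, X_m)$ lies in the Jacobson radical of $A$ (as $1+f$ is a unit for $f$ in this ideal, by geometric series), so any maximal ideal $\mathfrak{m}$ contains it, and $\bar{\mathfrak{m}} := \mathfrak{m}/(\pi_K, X_1, \ldots, X_m)$ is a maximal ideal of $A/(\pi_K, X_1, \ldots, X_m) = k[T_1, \ldots, T_n]$.

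For the upper bound, $\bar{\mathfrak{m}}$ is generated by $n$ elements $\bar f_1, \ldots, \bar f_n$ (standard for maximal ideals in a polynomial ring over a field, e.g.~via Noether normalization). Lifting to $f_i \in A$ and applying Nakayama (valid because $(\pi_K, X_1, \ldots, X_m)$ is in the Jacobson radical), $\mathfrak{m}$ is generated by the $m+n+1$ elements $\pi_K, X_1, \ldots, X_m, f_1, \ldots, f_n$, so Krull's generalized principal ideal theorem gives $\dim \widehat{A_\mathfrak{m}} = \operatorname{ht} \mathfrak{m} \le m+n+1$.

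For the lower bound, I would concatenate two chains of primes in $A$. A Krull-intersection argument (using that $(\pi_K, X_1, \ldots, X_m)$ is in the Jacobson radical and that $A/(\pi_K, X_1, \ldots, X_m) = k[T]$ is a domain) shows $A$ is itself a domain, and iterating shows each $A/(\pi_K, X_1, \ldots, X_i)$ is a domain, so $(0) \subsetneq (\pi_K) \subsetneq (\pi_K, X_1) \subsetneq \cdots \subsetneq (\pi_K, X_1, \ldots, X_m)$ is a strict chain. Pulling back a saturated chain of length $n$ in $k[T_1, \ldots, T_n]$ ending at $\bar{\mathfrak{m}}$ (which exists since $k[T_1,\ldots,T_n]$ is a Cohen--Macaulay equidimensional ring of dimension $n$) gives a strict chain from $(\pi_K, X_1, \ldots, X_m)$ to $\mathfrak{m}$; together these exhibit a chain of $m+n+2$ distinct primes ending at $\mathfrak{m}$, so $\operatorname{ht} \mathfrak{m} \ge m+n+1$. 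Combining with the upper bound, and using that completion preserves Krull dimension of Noetherian local rings, yields $\dim \widehat{A_\mathfrak{m}} = m+n+1$.

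The step I expect to be the main obstacle is the $(\pi_K, X_1, \ldots, X_m)$-adic completeness of $A$, since the notation $\langle T\rangle$ depends on the ambient topology placed on $\ok[[X_1, \ldots, X_m]]$; once this is pinned down, the remaining steps (Nakayama, Krull's height theorem, iterated Krull intersection, chain construction) are all routine commutative algebra.
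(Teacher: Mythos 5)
Your proposal is correct in conclusion and takes a genuinely different route from the paper. Both proofs begin by observing that $I=(\pi_K,X_1,\ldots,X_m)$ lies in the Jacobson radical of $A$, so that $\mf{m}$ reduces to a maximal ideal $\bar{\mf{m}}$ of $A/I=k[T_1,\ldots,T_n]$. After that the paper passes to a finite unramified extension of $K$ and translates the $T$-variables so that $\mf{m}=(\pi_K,X_1,\ldots,X_m,T_1,\ldots,T_n)$, and then simply identifies $\wh{A_{\mf{m}}}\cong\ok[[X_1,\ldots,X_m,T_1,\ldots,T_n]]$; this is short and gives the explicit structure of the completed local ring, but implicitly uses that the height of $\mf{m}$ is unchanged under the finite (\'etale) base extension. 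You instead compute $\dim A_{\mf{m}}=\operatorname{ht}\mf{m}$ directly: an upper bound of $m+n+1$ from Krull's generalized principal ideal theorem (since $\mf{m}$ is generated by $\pi_K,X_1,\ldots,X_m$ together with $n$ lifts of generators of $\bar{\mf{m}}$) and a matching lower bound by concatenating two chains of primes. This avoids both the base extension and the explicit computation of $\wh{A_{\mf{m}}}$, at the price of being a bit longer, and is a perfectly good alternative.

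One step of your sketch needs repair. Knowing that $I$ is in the Jacobson radical, that $\bigcap_n I^n=0$, and that $A/I$ is a domain does \emph{not} imply $A$ is a domain: take $R=(k[x,y]/(xy))_{(x,y)}$ and $I=(y)$, where $R/I\cong k[x]_{(x)}$ is a domain and $\bigcap I^n=0$, but $R$ is not a domain. The correct statement is that if the $I$-adic filtration is separated and the associated graded ring $\operatorname{gr}_I(A)$ is a domain, then $A$ is a domain. Here $\operatorname{gr}_I(A)\cong k[\bar\pi_K,\bar X_1,\ldots,\bar X_m][T_1,\ldots,T_n]$ (a polynomial ring over $k$), so the argument goes through; alternatively and more simply, $A=\ok[[X_1,\ldots,X_m]]\langle T_1,\ldots,T_n\rangle$ embeds into the formal power series ring $\ok[[X_1,\ldots,X_m,T_1,\ldots,T_n]]$, which is a domain. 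The same remark applies to the intermediate quotients $A/(\pi_K,X_1,\ldots,X_i)$. With this fix, your chain $(0)\subsetneq(\pi_K)\subsetneq\cdots\subsetneq(\pi_K,X_1,\ldots,X_m)$ followed by the pullback of a saturated chain in $k[T_1,\ldots,T_n]$ ending at $\bar{\mf{m}}$ is indeed strict and of length $m+n+1$, and the rest of the argument is sound.
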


\begin{proof}
This is probably well known, but we sketch a proof for the convenience of the reader. Let $I\sub A$ be the ideal generated by $\pi_{K},X_{1},...,X_{m}$; this is an ideal of definition of $A$, and $A$ is complete with respect to $I$, so $I$ is in the Jacobson radical of $A$ (in fact it is the Jacobson radical). The maximal ideals of $A$ are therefore in bijection with those of $A/I=k[T_{1},...,T_{n}]$, hence parametrised by elements in $\ol{k}^{n}$ (where $\ol{k}$ is an algebraic closure of $k$). Making a finite unramified extension of $K$ if necessary, we may assume that $\mf{m}$ is defined by a tuple in $k^{n}$, and applying a translation we may assume that this tuple is $0$. In other words, $\mf{m}=\langle \vp,X_{1},...,X_{m},T_{1},...,T_{n} \rangle$. Then $\wh{A_{\mf{m}}}\cong \ok[[X_{1},...,X_{m},T_{1},...,T_{n}]]$, which has dimension $m+n+1$.
\end{proof}

\begin{corollary}\label{dim2}
Let $R$ be a Tate ring topologically of finite type over $\ok$, and let $X=\Spa(R)$. Assume that $R$ is an integral domain. Then $X$ is equidimensional.
\end{corollary}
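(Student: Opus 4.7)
The plan is to realize $R$ as a quotient of a ``universal'' ambient ring and to pull the dimension computation back to the completions already handled in Lemma \ref{dim1}. By the remark following Definition \ref{ps1}, one may fix a continuous surjection $A := \ok[[X_1,\ldots,X_m]]\langle T_1,\ldots,T_n\rangle \twoheadrightarrow R$ for some $m,n\geq 0$, and let $\mf{a}\sub A$ denote its kernel, which is a prime ideal since $R$ is a domain. For any maximal ideal $\mf{m}$ of $R$, its preimage $\mf{n}\sub A$ is a maximal ideal containing $\mf{a}$, and by flatness of completion there is a canonical identification $\wh{\oo}_{X,x} = \wh{R_\mf{m}} \cong \wh{A_\mf{n}}/\mf{a}\wh{A_\mf{n}}$.

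The next step invokes the proof of Lemma \ref{dim1}, which identifies $\wh{A_\mf{n}}$, after a finite unramified extension $K'$ of $K$ and a translation if necessary, with the formal power series ring $\oo_{K'}[[Y_1,\ldots,Y_{m+n}]]$; in particular $\wh{A_\mf{n}}$ is a regular local domain of dimension $m+n+1$, hence catenary. Combining this with flatness of $A_\mf{n} \to \wh{A_\mf{n}}$ and the standard dimension formula for flat local extensions, for any minimal prime $\mf{q}$ of $\mf{a}\wh{A_\mf{n}}$ one computes
\[ \mathrm{ht}(\mf{q}) = \mathrm{ht}(\mf{a}A_\mf{n}) + \dim(\wh{A_\mf{n}})_\mf{q}/\mf{a}(\wh{A_\mf{n}})_\mf{q} = \mathrm{ht}(\mf{a}) + 0 = \mathrm{ht}(\mf{a}), \]
the middle term vanishing because $\mf{q}/\mf{a}\wh{A_\mf{n}}$ is a minimal prime of $\wh{A_\mf{n}}/\mf{a}\wh{A_\mf{n}}$, and with $\mathrm{ht}(\mf{a}A_\mf{n}) = \mathrm{ht}(\mf{a})$ since localization preserves heights in the Noetherian ring $A$.

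By catenarity of the regular local domain $\wh{A_\mf{n}}$, every such minimal prime then satisfies $\dim \wh{A_\mf{n}}/\mf{q} = (m+n+1) - \mathrm{ht}(\mf{a})$, whence
\[ \dim \wh{R_\mf{m}} = \dim \wh{A_\mf{n}}/\mf{a}\wh{A_\mf{n}} = (m+n+1) - \mathrm{ht}(\mf{a}). \]
Since this value depends only on $A$ and $\mf{a}$, it is independent of the choice of $\mf{m}$, so $X$ is equidimensional. The only real subtlety is verifying that \emph{every} minimal prime of $\mf{a}\wh{A_\mf{n}}$ has height exactly $\mathrm{ht}(\mf{a})$; above I extract this from the flat dimension formula, but alternatively one can invoke excellence of $A$ (Proposition \ref{ps2}(1)), which guarantees that the formal fibres of $A_\mf{n} \to \wh{A_\mf{n}}$ are geometrically regular and therefore that $\mf{a}\wh{A_\mf{n}}$ is unmixed of height $\mathrm{ht}(\mf{a})$.
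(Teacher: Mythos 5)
Your opening move is incorrect, and the error propagates through the whole argument. The remark following Definition~\ref{ps1} does \emph{not} say that $R$ itself is a quotient of $A := \ok[[X_1,\ldots,X_m]]\langle T_1,\ldots,T_n\rangle$. It says that a \emph{ring of definition} $R_0 \subset R$ is (topologically) isomorphic to a quotient of $\ok[[X]]\langle Y_1,\ldots,Y_n\rangle$, and then $R = R_0[1/\vp]$ for a topologically nilpotent unit $\vp$. So $R$ is a (completed) localization of such a quotient, not a quotient.

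In fact no continuous surjection $A \twoheadrightarrow R$ can exist for nonzero $R$. Both $A$ (with its $(\pi_K, X_1,\ldots,X_m)$-adic topology) and $R$ are complete metrizable topological groups, so by the open mapping theorem a continuous surjection would be open, forcing $R$ to carry the quotient topology; this is an adic topology with ideal of definition $\bar{I}$, the image of $(\pi_K, X_1,\ldots,X_m)$. Since $R$ is Tate, it has a topologically nilpotent unit $\vp$, which must lie in $\sqrt{\bar{I}}$, so $\bar{I}$ contains a unit, hence $\bar{I} = R$, hence the topology is discrete, hence $\vp^n = 0$ for $n \gg 0$ --- impossible unless $R = 0$. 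Intuitively, $\vp^{-1} \in R$ is not in the image of any continuous map out of an $I$-adic ring.

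Because of this, all the subsequent identifications ($\wh{R_\mf{m}} \cong \wh{A_\mf{n}}/\mf{a}\wh{A_\mf{n}}$ for $\mf{n}$ a \emph{maximal} ideal of $A$, etc.) do not apply. The paper instead presents $R_0 = A/P$ with $A = \ok[[X]]\langle T_1,\ldots,T_n\rangle$ (a single formal variable suffices) and then traces a maximal ideal $\mf{m}$ of $R$ down through $\mf{p} = \mf{m} \cap R_0$ (a prime of coheight one, not a maximal ideal of $R_0$) and the unique maximal ideal $\mf{q} \supset \mf{p}$, using $\dim R_\mf{m} = \dim(R_0)_\mf{p} = \dim(R_0)_\mf{q} - 1 = \dim A_Q - \dim A_P - 1$ by catenarity of $R_0$ and $A$ and Lemma~\ref{dim1}. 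Your core idea --- presenting the ring over a universal power-series ring, using regularity/catenarity of the ambient ring and Lemma~\ref{dim1} to show the local dimension is $\dim A_\mf{n} - \mathrm{ht}(\mf{a})$ independently of the maximal ideal --- is essentially the right idea, but it has to be applied to the ring of definition $R_0$, with an extra step ($\dim R_\mf{m} = \dim(R_0)_\mf{q} - 1$) handling the passage from maximal ideals of $R$ to maximal ideals of $R_0$. Once that is set up, plain catenarity of $A$ and $R_0$ already gives the result without needing the flat dimension formula or the discussion of minimal primes of $\mf{a}\wh{A_\mf{n}}$.
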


\begin{proof}
Let $R_{0}\sub R$ be a ring of definition which is formally of finite type over 
$\ok$, and choose a surjection $A=\ok[[X]]\langle T_{1},...,T_{n} \rangle 
\twoheadrightarrow R_{0}$. The kernel of 
this 
surjection is a prime ideal which we will call $P$. Let $\mf{m}$ be a maximal 
ideal in $R$ and put $\mf{p}=\mf{m}\cap R_{0}$. Recall that $R_{0}/\mf{p}$ is 
local of dimension $1$, and let $\mf{q}\sub R_{0}$ denote the unique maximal 
ideal of $R_{0}$ above $\mf{p}$. Let $Q$ denote the preimage of $\mf{q}$ in 
$A$. We have
$$  \dim R_{\mf{m}} = \dim (R_{0})_{\mf{p}}. $$
Since $R_{0}$ is catenary, we have
$$ \dim (R_{0})_{\mf{p}} = \dim (R_{0})_{\mf{q}} - 1. $$
Since $A$ is catenary we have
$$ \dim (R_{0})_{\mf{q}} = \dim A_{Q} - \dim A_{P} = n+2 - \dim A_{P}; $$
where we have used Lemma \ref{dim1} in the last equality. Therefore $\dim R_{\mf{m}} = n+1 - \dim A_{P}$ is independent of $\mf{m}$, as desired.
\end{proof}

We can now globalize this.

\begin{theorem}\label{dim3}
Let $X$ be an irreducible pseudorigid space over $\ok$. Then $X$ is equidimensional, and $\dim X = \dim \wt{X}$.
\end{theorem}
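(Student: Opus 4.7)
The proof splits into two main steps. First I establish that $\wt X$ is equidimensional, using its connectedness (Lemma~\ref{no7}) and normality. Second, I transfer this to $X$ via the finite surjection $p\colon\wt X\to X$, by decomposing $\wt R$ locally into a product of normal domains and invoking Corollary~\ref{dim2}.

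For the first step, since $\wt X$ is normal, any $\tilde x\in\wt X$ admits an affinoid neighborhood $\Spa(\wt R')\sub\wt X$ with $\wt R'$ a normal domain: start from any normal affinoid $\Spa(\wt R)\ni\tilde x$, write $\wt R=\prod_{j}\wt R_{j}$ as a product of normal domains, and pick the factor containing $\tilde x$. Define $\tilde f\colon\wt X\to\N$ by $\tilde f(\tilde x)=\dim\wt R'$ for any such neighborhood. Well-definedness: if $\Spa(\wt R')$ and $\Spa(\wt S')$ both contain $\tilde x$, their nonempty open intersection contains a maximal point $y$ by Proposition~\ref{ps5}; Corollary~\ref{dim2} applied to the domain $\wt R'$ together with Proposition~\ref{ps2}(3) gives
\[\dim\wt R'=\dim\wt R'_{\mf m_y}=\dim\wh{\wt R'_{\mf m_y}}=\dim\wh{\oo}_{\wt X,y},\]
and similarly for $\wt S'$, so $\dim\wt R'=\dim\wt S'$. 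By construction $\tilde f$ is locally constant, and since $\wt X$ is connected it is globally constant, of some value $\tilde d$.

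For the second step, fix $x\in\Max(X)$ and an affinoid neighborhood $U=\Spa(R)\ni x$. Then $p^{-1}(U)=\Spa(\wt R)$ with $\wt R=\prod_i\wt R_i$ a product of normal domains, each $\wt R_i$ being the normalization of $R/P_i$ for a minimal prime $P_i$ of $R$. The finite integral extension $R/P_i\hookrightarrow\wt R_i$ yields $\dim R/P_i=\dim\wt R_i$ by Cohen--Seidenberg, and Step~1 gives $\dim\wt R_i=\tilde d$ (since $\Spa(\wt R_i)$ is an affinoid open in $\wt X$ with $\wt R_i$ a normal domain). Applying Corollary~\ref{dim2} to the domain $R/P_i$ produces $\dim(R/P_i)_{\mf m_x/P_i}=\tilde d$ whenever $P_i\sub\mf m_x$. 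Since any maximal ideal contains some minimal prime,
\[\dim\wh{\oo}_{X,x}=\dim R_{\mf m_x}=\max_{P_i\sub\mf m_x}\dim(R/P_i)_{\mf m_x/P_i}=\tilde d,\]
proving both the equidimensionality of $X$ and the equality $\dim X=\dim\wt X$.

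The main obstacle is Step~1: converting topological connectedness of $\wt X$ into a uniform statement about local Krull dimensions. The key device is to define $\tilde f$ using affinoid neighborhoods with \emph{normal domain} ring (available by normality), and to check independence of this choice by restricting to a common rational subset, picking a maximal point there by density, and comparing completed local rings via Proposition~\ref{ps2}(3), where the equidimensionality of domains from Corollary~\ref{dim2} finally bridges the two sides.
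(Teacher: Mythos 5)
Your proof is correct and follows essentially the same route as the paper: first establish that the normal connected space $\wt X$ is equidimensional using Corollary~\ref{dim2} and connectedness (Lemma~\ref{no7}), then transfer to $X$ through the finite surjection $p\colon\wt X\to X$. The only cosmetic difference is in the transfer step, where you decompose $\wt R=\prod_i\wt R_i$ and compare global Krull dimensions via Cohen--Seidenberg plus equidimensionality of each $R/P_i$, whereas the paper invokes the going-up theorem once to write $\dim R_{\mf m}=\max_{\wt{\mf m}\mid\mf m}\dim\wt R_{\wt{\mf m}}$ directly; these are the same argument in different bookkeeping.
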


\begin{proof}
First assume that $X$ is normal. Let $U=\Spa(R)\sub X$ be an open connected affinoid pseudorigid space. Then $R$ is a normal domain, and therefore $U$ is equidimensional by Corollary \ref{dim2}. By connectedness of $X$ it follows that $X$ is equidimensional.

\medskip

We now do the general case. By the above we know that $\wt{X}$ is 
equidimensional. Let $x\in \Max(X)$. Choose an open affinoid pseudorigid space 
$U=\Spa(R)\sub X$ containing $x$ and let $\mf{m}\sub R$ be the maximal ideal 
corresponding to $x$. We want to show that $\dim R_{\mf{m}}=\dim \wt{X}$. Let 
$\wt{R}$ be the normalization of $R$. $\wt{U}=\Spa(\wt{R})$ is the preimage of 
$U$ in $\wt{X}$ by construction. By the going-up theorem, we know that
$$ \dim R_{\mf{m}} = \max_{\wt{\mf{m}}} \dim \wt{R}_{\wt{\mf{m}}}; $$
where $\wt{\mf{m}}$ ranges over the maximal ideals of $\wt{R}$ lying over $\mf{m}$. Since $\wt{X}$, and therefore $\wt{U}$, is equidimensional, the right hand side is equal to $\dim \wt{X}$, as desired.
\end{proof}

We end this subsection by recording the relation between the irreducible components passing through a maximal point $x\in X$ and the minimal primes of the completed local ring $\wh{\oo}_{X,x}$.

\begin{proposition}\label{dim4}
Let $X$ be a pseudorigid space over $\ok$ and let $x\in \Max(X)$. There is canonical surjective map $\Psi$ from the minimal primes $\mf{p}$ of $\wh{\oo}_{X,x}$ to the irreducible components of $X$ containing $x$, and $\dim \Psi(\mf{p})=\dim \wh{\oo}_{X,x}/\mf{p}$.
\end{proposition}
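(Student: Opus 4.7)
The plan is to work locally around $x$ and reduce the statement to a standard excellence-based fact about the interaction of normalization and completion. Choose an open affinoid pseudorigid neighbourhood $U = \Spa(R)$ of $x$, with $\mf{m} \sub R$ the maximal ideal corresponding to $x$, so that $\wh{\oo}_{X,x} = \wh{R_{\mf{m}}}$. Let $p : \wt{X} \ra X$ denote the normalization, write $\wt{U} = p^{-1}(U) = \Spa(\wt{R})$, and let $\wt{\mf{m}}_{1},\ldots,\wt{\mf{m}}_{r}$ be the (finitely many) maximal ideals of $\wt{R}$ over $\mf{m}$, corresponding to points $\wt{x}_{1},\ldots,\wt{x}_{r} \in p^{-1}(x)$. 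Since both the minimal primes of $\wh{R_{\mf{m}}}$ and the irreducible components of $X$ through $x$ depend only on the reduced structure, and since $\wh{R_{\mf{m}}}$ is reduced whenever $R$ is (using excellence of $R_{\mf{m}}$ from Proposition \ref{ps2}(1)), I will assume $R$ is reduced.

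The central technical input is the identification
\[ \wh{R_{\mf{m}}} \otimes_{R} \wt{R} \;\cong\; \prod_{i=1}^{r}\wh{\wt{R}_{\wt{\mf{m}}_{i}}}, \]
together with the assertion that the left hand side is the normalization of $\wh{R_{\mf{m}}}$. The displayed isomorphism is standard: it is the decomposition of the completion of the semi-local finite $R_{\mf{m}}$-algebra $\wt{R}_{\mf{m}}$ along its Jacobson radical, and each factor $\wh{\wt{R}_{\wt{\mf{m}}_{i}}}$ is a complete Noetherian normal local ring, hence a normal domain. That this really is the normalization of $\wh{R_{\mf{m}}}$ is the excellence-based fact that completion commutes with normalization for reduced excellent local rings, and is the only essentially non-formal step in the argument. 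On the other hand, the normalization of the reduced Noetherian ring $\wh{R_{\mf{m}}}$ decomposes as the product of the normalizations of $\wh{R_{\mf{m}}}/\mf{p}$ over its minimal primes $\mf{p}$; by uniqueness of expressing a normal Noetherian ring as a product of normal domains, this yields a canonical bijection between the minimal primes of $\wh{R_{\mf{m}}}$ and the points of $p^{-1}(x)$.

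Given this bijection, for each minimal prime $\mf{p}$ of $\wh{\oo}_{X,x}$ matched with $\wt{x}_{i}$, I define $\Psi(\mf{p}) := p(\wt{X}_{\mf{p}})$, where $\wt{X}_{\mf{p}}$ is the connected component of $\wt{X}$ containing $\wt{x}_{i}$. By Proposition \ref{irr2}, $\Psi(\mf{p})$ is an irreducible component of $X$ containing $x$. Surjectivity is immediate: any irreducible component of $X$ through $x$ has the form $p(\wt{X}_{j})$ for some connected component $\wt{X}_{j}$ of $\wt{X}$ meeting $p^{-1}(x)$, so it contains some $\wt{x}_{i}$ and is therefore $\Psi$ of the associated minimal prime.

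For the dimension formula, $\wt{X}_{\mf{p}}$ is normal and connected, hence irreducible, and it contains an open neighbourhood of $\wt{x}_{i}$ in $\wt{X}$ (namely the $\Spa$ of the normal domain factor of $\wt{R}$ containing $\wt{\mf{m}}_{i}$), so Theorem \ref{dim3} gives $\dim \wt{X}_{\mf{p}} = \dim \wh{\oo}_{\wt{X},\wt{x}_{i}} = \dim \wh{\wt{R}_{\wt{\mf{m}}_{i}}}$. The finite surjective map $\wt{X}_{\mf{p}} \ra \Psi(\mf{p})$ preserves dimension, so $\dim \Psi(\mf{p}) = \dim \wh{\wt{R}_{\wt{\mf{m}}_{i}}}$. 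Finally, the factor identification above exhibits $\wh{\wt{R}_{\wt{\mf{m}}_{i}}}$ as the normalization of $\wh{R_{\mf{m}}}/\mf{p}$, and a finite integral extension preserves Krull dimension, giving $\dim \Psi(\mf{p}) = \dim \wh{\oo}_{X,x}/\mf{p}$, as required.
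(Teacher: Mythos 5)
Your proof is correct, but it takes a genuinely different route from the paper's. The paper factors $\Psi$ through the intermediate objects: it identifies $Irr(U,x)$ (irreducible components of an affinoid neighbourhood through $x$) with $Irr(\Max(R),\mf{m})$, then with the minimal primes of $R_{\mf{m}}$, and finally produces a dimension-preserving surjection from the minimal primes of $\wh{R_{\mf{m}}}$ onto those of $R_{\mf{m}}$ using faithful flatness of $R_{\mf{m}}\rightarrow\wh{R_{\mf{m}}}$, going-down, and equidimensionality of the completion of an excellent local domain. You instead pass at once to the normalization $p\colon\wt{X}\rightarrow X$ and invoke the (excellence-based) compatibility of normalization with completion, $\wt{\wh{R_{\mf{m}}}}\cong\wh{R_{\mf{m}}}\otimes_{R}\wt{R}\cong\prod_{i}\wh{\wt{R}_{\wt{\mf{m}}_{i}}}$, to obtain a canonical bijection between the minimal primes of $\wh{\oo}_{X,x}$ and the fibre $p^{-1}(x)$; this plugs directly into Proposition \ref{irr2}, which realises irreducible components as images of connected components of $\wt{X}$. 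The two constructions of $\Psi$ agree, as one checks by tracing a minimal prime $\mf{p}\subset\wh{R_{\mf{m}}}$ to its contraction in $R_{\mf{m}}$, but the emphases differ: the paper's argument needs only the relatively elementary flatness/going-down inputs, whereas yours hinges on the heavier but cleaner commutative-algebra fact that normalization commutes with completion for reduced excellent local rings. The upshot is that your approach makes the bijection between minimal primes of the completed local ring and preimages in the normalization completely explicit (which the paper's surjection does not), while the paper's keeps the commutative-algebra overhead lower and, by construction, makes the independence of $U$ visible through the intermediate identification with $Irr(U,x)$.
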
 

\begin{proof}
If $Y$ is a pseudorigid space, let $Irr(Y)$ denote the set of irreducible 
components of $Y$, and if $y\in Y$ is a maximal point, we let $Irr(Y,y)\sub 
Irr(Y)$ be the set of irreducible components containing $y$. Choose an open 
affinoid pseudorigid $U=\Spa(R)\sub X$ containing $x$. Then the morphism 
$\wt{U}\ra \wt{X}$ induces a map $Irr(U)=Irr(\wt{U})\ra Irr(\wt{X})=Irr(X)$, 
which restricts to a surjection $Irr(U,x) \twoheadrightarrow Irr(X,x)$, and 
preserves dimensions.

\medskip
By our definitions, $Irr(U,x)$ can be identified with the set 
$Irr(\Max(R),\mf{m})$ of irreducible components of $\Max(R)$ containing the 
maximal ideal $\mf{m}\sub R$ corresponding to $x$, and this identification 
preserves dimensions. $Irr(\Max(R),\mf{m})$ can in turn be identified with the 
minimal primes of $R_{\mf{m}}$. Since $R_{\mf{m}}\rightarrow \wh{R_{\mf{m}}}$ 
is faithfully flat, it follows from the going down property and the invariance 
of dimension on completion that there is a 
dimension-preserving surjection from the minimal primes of 
$\wh{\oo}_{X,x}=\wh{R_{\mf{m}}}$ to $Irr(\Max(R),\mf{m})$, given by pullback 
along $R \ra \wh{R_{\mf{m}}}$. Finally, one takes the composition with the map 
above to get a map to $Irr(X,x)$, and check that this is independent of the 
choice of $U$.
\end{proof}

\subsection{Factoriality of normalized weight space}

The theory of Fredholm series and hypersurfaces is fundamental to the theory of 
eigenvarieties, which is our intended application. The study of irreducible 
components of Fredholm hypersurfaces provided motivation for developing the 
general theory of irreducible components of rigid spaces. It is based upon the 
notion of a rigid space $X$ being \emph{locally relatively factorial} (in $m$ 
variables), which means that $X$ has a cover by open affinoids 
$U_{i}=\Sp(A_{i})$ such that the relative Tate algebras $A_{i}\langle 
X_{1},\dots X_{m} \rangle$ are factorial (i.e. are unique factorization 
domains). 
When one moves away from eigenvarieties that are equidimensional over weight 
space, the role of Fredholm hypersurfaces becomes less important. For this 
reason, we have not pursued the generalization of the results of \cite[\S 
4]{con} to the setting of pseudorigid spaces (we believe that this should be 
mostly straightforward, but we have not checked the details). One thing, 
however, that is perhaps not so clear, is that the weight spaces that occur for 
the extended eigenvarieties in \cite{jn} are relatively factorial. While we 
will not need this fact in this paper, it seems worth recording. Our proof is 
based upon the following criterion for factoriality. Recall that an integral 
domain $R$ is \emph{locally factorial} if the localizations $R_{\mf{p}}$ are 
factorial for all prime ideals $\mf{p}$ of $R$ (it suffices to check this for 
maximal ideals).

\begin{lemma}\label{lem:samuelufd}
	Let $R$ be a locally factorial Noetherian ring. Suppose $R$ is an integral 
	domain and that there is an element $x$ of the Jacobson radical of $R$ such 
	that $R/xR$ is factorial. Then $R$ is factorial.
\end{lemma}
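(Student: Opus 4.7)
The plan is to prove that $R$ is factorial by showing that its Picard group $\mathrm{Pic}(R)$ vanishes. Since $R$ is locally factorial, it is in particular normal (as every UFD is integrally closed), so $R$ is a Noetherian Krull domain, and the natural injection $\mathrm{Pic}(R)\hookrightarrow \mathrm{Cl}(R)$ into the divisor class group is surjective because every height-one prime of $R$ is locally principal by the local factoriality hypothesis. Recalling that a Noetherian normal domain is factorial if and only if its divisor class group is trivial, we are reduced to proving $\mathrm{Pic}(R)=0$. The same reasoning applied to the factorial Noetherian domain $R/xR$ gives $\mathrm{Pic}(R/xR) = \mathrm{Cl}(R/xR) = 0$.

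It therefore suffices to show that the reduction map $\mathrm{Pic}(R)\to \mathrm{Pic}(R/xR)$ is injective. Suppose $L$ is an invertible $R$-module with $L/xL\cong R/xR$, and lift a generator to $\ell \in L$. By Nakayama's lemma, applied using that $x$ lies in the Jacobson radical, $\ell$ generates $L$, yielding a surjection $R\twoheadrightarrow L$. Since $L$ is projective this surjection splits, so $R\cong L\oplus K$ for some $R$-submodule $K$. Because $R$ is a domain and $L$ has rank one, $K$ must be torsion, hence zero, so $L\cong R$.

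The main obstacle is not technical but rather just to correctly invoke the standard identifications $\mathrm{Pic}(R)=\mathrm{Cl}(R)$ for locally factorial Noetherian domains together with the criterion that a Noetherian normal domain is a UFD precisely when its class group vanishes; once these inputs are in place, the Nakayama argument showing injectivity of $\mathrm{Pic}(R)\to \mathrm{Pic}(R/xR)$ is entirely routine.
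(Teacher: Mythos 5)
Your proof is correct, and it takes a genuinely different route from the paper's (which reproduces Samuel's original ideal-theoretic argument). The paper works directly with a height-one prime ideal $I$ and splits into cases: if $x \in I$ then $xR \subset I$ forces $xR = I$ by a height comparison, while if $x \notin I$ then $I \cap xR = xI$, so $I/xI \cong (I+xR)/xR$ is a projective ideal of $R/xR$, hence principal by factoriality of $R/xR$, and Nakayama then shows $I$ is principal. You instead argue at the level of the Picard group: after invoking the identification $\mathrm{Pic}(R)=\mathrm{Cl}(R)$ for locally factorial Noetherian normal domains and the criterion that a Noetherian normal domain is a UFD precisely when its class group vanishes, you reduce to proving $\mathrm{Pic}(R)=0$, and establish this by showing $\mathrm{Pic}(R)\to\mathrm{Pic}(R/xR)$ is injective (via Nakayama) together with $\mathrm{Pic}(R/xR)=0$. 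The two arguments share the same engine --- ``free/principal mod $xR$ plus Nakayama implies free/principal'' --- but yours trades the case analysis for the formalism of Krull domains and divisor class groups, which cleanly absorbs the annoying case $x\in I$ by working with abstract rank-one projectives $L$ rather than actual ideals (so that $L/xL$ is automatically an invertible $R/xR$-module). The paper's version is more elementary, needing only the fact that a Noetherian domain in which every height-one prime is principal is a UFD; yours is more conceptual, and exhibits the statement as a special case of the standard fact that $\mathrm{Pic}(R)\to\mathrm{Pic}(R/xR)$ is injective whenever $x$ lies in the Jacobson radical.
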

\begin{proof}
This is \cite[Ch. 2, Lemma 2.2]{samuelufd}; we sketch the proof for the convenience of the reader. It suffices to 
show that every height one prime ideal in $R$ is principal. Let $I \subset R$ 
be a height one prime ideal. By the locally factorial assumption, $I$ is 
locally principal and is therefore projective as an $R$-module. Note that since 
$R/xR$ is a domain, $xR$ is a prime ideal.

\medskip
Suppose $x \in I$. Then $xR \subset I$ and since $I$ has height one we have $xR 
= I$ and therefore $I$ is principal. Now suppose $x \notin I$. Then $I \cap xR 
= xI$, so $I/xI = I/I\cap xR \cong (I+xR)/xR$.  Since $I$ is projective over 
$R$, 
$(I+xR)/xR$ is a 
projective ideal in $R/xR$, and by factoriality of $R/xR$ it is therefore a 
principal ideal. By Nakayama's lemma, $I$ is principal.
\end{proof}

The weight spaces that occur in \cite{jn} have the form $\Spa(\Zp[[S]])^{an}$, 
where $S\cong F \times \Zp^n$ as $p$-adic analytic groups for some finite group 
$F$. It might happen that $F$ has $p$-torsion, in which case 
$\Spa(\Zp[[S]])^{an}$ may fail to be a domain locally. One can take the 
normalization, in which case it will be locally of the form 
$\Zp[[T_{1},\dots,T_{n}]]^{an}$. We will sketch a proof that these spaces are 
locally relatively factorial. Let $K$ be a discrete valued field.

\begin{definition}
Let $X=\Spa(A)$ be an affinoid pseudorigid space over $\ok$. We say that $X$ is 
\emph{relatively factorial} (in $m$ variables) if $A\langle 
X_{1},\dots,X_{m}\rangle$ is factorial. If $X$ is a general pseudorigid space, 
we say that $X$ is \emph{locally relatively factorial} (in $m$ variables) if 
there is an open cover of $X$ by affinoid pseudorigid spaces 
$U_{i}=\Spa(A_{i})$ such that $A_{i}$ is relatively factorial in $m$ variables.
\end{definition}

Now consider the pseudorigid space $\ok[[T_{1},\dots,T_{n}]]^{an}$. It has an open cover given by the affinoid pseudorigid space $U_{0}=\{|T_{1}|,\dots,|T_{n}|\leq |p|\neq 0\}$ and
$$ U_{i}=\{|p|,|T_{1}|,\dots,|T_{n}|\leq |T_{i}|\neq 0 \}, $$
for $i=1,\dots,n$. $U_{0}$ is the adic spectrum of a Tate algebra in $n$ variables over $K$, and hence relatively factorial (in any number of variables) by \cite[5.2.6/1]{bgr}.

\begin{theorem}
Let $m\in \Z_{\geq 0}$ and let $1\leq i \leq n$. Then $U_{i}$ is relatively factorial in $m$ variables. 
\end{theorem}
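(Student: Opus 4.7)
The plan is to identify the ring $\oo(U_i)\langle X_1, \ldots, X_m\rangle$ with the localization $C[1/T_i]$ of a restricted formal power series ring $C = \oo_K[[T_i]]\langle Y_1, \ldots, Y_N\rangle$, where $N = (n-1) + m$, and then to show $C$ is factorial by two applications of Lemma \ref{lem:samuelufd}. On the chart $U_i$ the topologically nilpotent unit is $T_i$ and the power-bounded generators are the ratios $T_j/T_i$ (for $j \neq i$) and $p/T_i$, subject to the relation $T_i \cdot (p/T_i) = p$. After inverting $T_i$, this relation determines $p/T_i$ as a specific element of $\oo_K[[T_i]][1/T_i]$, so only the $T_j/T_i$ persist as independent restricted variables, and together with the $X_k$ they make up the $Y$'s. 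Since localization preserves the UFD property, it suffices to show $C$ is a UFD.

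The first application of Lemma \ref{lem:samuelufd} takes $R = C$ and $x = T_i$. The ring $C$ is a Noetherian integral domain (embedding into the power series ring $\oo_K[[T_i, Y_1, \ldots, Y_N]]$), and every maximal ideal of $C$ contains $(\varpi, T_i)$ (as in the proof of Lemma \ref{dim1}), so $T_i$ lies in the Jacobson radical of $C$. By Lemma \ref{dim1}, $\dim C_\m = N + 2$ at each maximal ideal $\m$, and $\m$ is generated by the $N + 2$ elements $\varpi$, $T_i$, together with lifts of generators of a maximal ideal of $k[Y_1, \ldots, Y_N]$; so $C$ is regular and hence locally factorial by Auslander--Buchsbaum. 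The reduction $C/T_i C$ is isomorphic to the Tate algebra $\oo_K\langle Y_1, \ldots, Y_N\rangle$, and it remains to show that this Tate algebra is a UFD.

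This is addressed by a second application of Lemma \ref{lem:samuelufd}, with $R' = \oo_K\langle Y_1, \ldots, Y_N\rangle$ and $x = \varpi$: the Tate algebra is a Noetherian regular integral domain (hence locally factorial), $\varpi$ lies in its Jacobson radical, and the quotient $R'/\varpi R' = k[Y_1, \ldots, Y_N]$ is a polynomial ring over the residue field $k$, which is a UFD. The main technical point I expect will require care is the first step: the explicit identification of $\oo(U_i)\langle X_1, \ldots, X_m\rangle$ with $C[1/T_i]$, in particular verifying that the power-bounded generator $p/T_i$ can be eliminated cleanly after inverting $T_i$ without losing track of the completion structure.
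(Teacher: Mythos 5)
Your first step is the one that fails, and it cannot be repaired. You correctly flag the identification $A := \oo(U_i)\langle X_1,\dots,X_m\rangle \cong C[1/T_i]$, with $C = \ok[[T_i]]\langle Y_1,\dots,Y_N\rangle$ and $N = (n-1)+m$, as the delicate point; it is in fact false. The issue is precisely the completion-structure subtlety you worry about: although $p/T_i$ is an element of $C[1/T_i]$ as a fraction, it is \emph{not} power-bounded there. The natural ring of definition of $A$ is $A_0 = \oo(U_i)^\circ\langle X_1,\dots,X_m\rangle$, which contains $p/T_i$; and $C$ is not $T_i$-adically commensurable with $A_0$, since $T_i^M(p/T_i)^k = p^kT_i^{M-k}$ lies in $C$ only when $k \leq M$. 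Said differently, $A_0$ is essentially the $T_i$-adic completion of $C[p/T_i]$, and this completion is strictly larger than $C$ and is not reached from $C$ by localization: it picks up genuinely new elements (limits such as $\sum_{k\geq 0}T_i^k(p/T_i)^{2k}$ converge in $A_0$ but do not lie in $C[1/T_i]$). Consequently $A$ is not a localization of $C$, so the step ``localization preserves the UFD property'' has nothing to apply to, and the factoriality of $C$ tells you nothing about $A$.

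The paper's proof avoids this by applying Lemma~\ref{lem:samuelufd} directly to the actual ring of definition $A_0$ rather than to a smaller proxy. Computing the $T_i$-adic associated graded ring of $A_0$ shows that $A_0/T_iA_0$ is a polynomial ring over the residue field $k$ in $n+m$ variables --- namely the classes of $p/T_i$, of $T_j/T_i$ for $j\neq i$, and of $X_1,\dots,X_m$. Note the two structural differences from your $C/T_iC \cong \ok\langle Y_1,\dots,Y_N\rangle$: there is an extra polynomial generator coming from $p/T_i$, and the quotient is a $k$-algebra (since $p=T_i\cdot(p/T_i)$ vanishes modulo $T_i$) rather than an $\ok$-algebra. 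These differences reflect exactly the elements your $C$ is missing. Your subsidiary steps --- that $C$ is a Noetherian domain, that it is regular and hence locally factorial via Lemma~\ref{dim1} and Auslander--Buchsbaum, and the second application of Lemma~\ref{lem:samuelufd} to $\ok\langle Y\rangle$ --- are all correct as statements about $C$, and the paper runs this same pattern of reasoning; the gap lies solely in which ring Lemma~\ref{lem:samuelufd} must be applied to.
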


\begin{proof}
We will content ourselves with a sketch, since this result will not be used in the rest of the paper. By symmetry, it suffices to do the case $i=1$. Set $U:=U_{1}=\Spa(R)$ and $A=R\langle X_{1},\dots,X_{m} \rangle$. $A$ has a ring of definition $A_{0}=R^{\circ}\langle X_{1},\dots,X_{m} \rangle$ which carries the $T:=T_{1}$-adic topology; these are Noetherian rings. We may define an increasing filtration on $A$ by $F_{j}=T^{-j}A_{0}$, and one may compute the corresponding graded ring $gr(A)$; we have
$$gr(A) = k[u_{0},u=u_{1},u^{-1},u_{2},\dots,u_{n},v_{1},\dots,v_{n}]$$
where $k$ is the residue field of $K$, $u_{0}$ is the symbol of $p$ and has degree $1$, $u_{i}$ is the symbol of $T_{i}$ and has degree $1$, and $v_{j}$ is the symbol of $X_{j}$ and has degree $0$. In particular, $gr(A)$ is a domain, so $A$ and $A_{0}$ are domains. We have 
$$ gr(A_{0})= k[u,u_{0}u^{-1},u_{2}u^{-1},\dots,u_{n}u^{-1},v_{1},\dots,v_{m}] $$
so
$$ A_{0}/TA_{0}=k[u_{0}u^{-1},u_{2}u^{-1},\dots,u_{n}u^{-1},v_{1},\dots,v_{m}].$$
We now wish to apply Lemma \ref{lem:samuelufd} to show that $A_{0}$, and hence 
$A$, is factorial. We have verified that $A_{0}$ is a Noetherian domain and 
that $A_{0}/TA_{0}$ is factorial, and that $T$ is in the Jacobson radical of 
$A_{0}$ (since $A_{0}$ is $T$-adically complete). It remains to show that 
$A_{0}$ is locally factorial. Let $\mf{m}$ be a maximal ideal of $A_{0}$; it 
contains $T$. By above the ring $A_{0}/TA_{0}$ is regular, so 
$(A_{0}/TA_{0})_{\mf{m}}=(A_{0})_{\mf{m}}/T(A_{0})_{\mf{m}}$ is regular. Since 
$T$ is a non-zerodivisor, it follows that $(A_{0})_{\mf{m}}$ is a regular local 
ring \cite[\href{http://stacks.math.columbia.edu/tag/00NU}{Tag 
00NU}]{stacks-project} and hence factorial. 
\end{proof}

\section{An interpolation theorem}\label{sec interpolation}

In \cite{che2}, Chenevier introduced an abstract interpolation theorem that allows one to show that a set-theoretic map between subsets of two eigenvarieties, in certain circumstances, extends to a rigid analytic morphism of \emph{reduced} eigenvarieties. Chenevier's interpolation theorem was formulated in terms of the input datum of the general eigenvariety construction of Buzzard \cite{bu}. In \cite{han}, Hansen proved a generalization of Chenevier's theorem for the abstract eigenvariety construction considered in that paper, and gave some applications. In this section we prove a generalization of Hansen's theorem that we believe is close to optimal, and also applies to the extended eigenvarieties constructed in \cite{jn}.

\subsection{Eigenvariety data}

We abstract the ingredients of the construction of extended eigenvarieties in \cite[\S 4]{jn}, generalizing \cite[Definition 4.2.1]{han}. We fix a complete discretely valued field $K$ with ring of integers $\ok$ and a uniformizer $\pi_{K}$.

\begin{definition}
An \emph{eigenvariety datum} is a tuple 
$\mf{O}=(\ms{Z},\ms{H},\mb{T},\psi)$ consisting of a pseudorigid 
space $\ms{Z}$ over $\ok$, $\ms{H}$ a coherent $\oo_{\ms{Z}}$-module, 
$\mb{T}$ a commutative $\Zp$-algebra, and $\psi: \mb{T} \ra 
\End_{\oo_{\ms{Z}}}(\ms{H})$ a $\Zp$-algebra homomorphism.
\end{definition}

We remark that in practice, an eigenvariety datum as above is the penultimate 
step in the construction of an eigenvariety. In these situations, $\ms{Z}$ is 
typically (isomorphic to) $\A^{n}_{\ms{W}}$ for some $n\geq 1$ (or $\mb{G}_{m,\ms{W}}^{n}$), where $\ms{W}$ is the `weight 
space'. In fact, in most eigenvariety constructions one will have $n=1$ and 
$\ms{Z}$ can alternatively be taken to be a Fredholm hypersurface in 
$\A^{1}_{\ms{W}}$; this is where the use of the letter `Z' comes from. Let us 
recall that a Fredholm series over $\ms{W}$ is an entire power series 
$F(X)\in \oo_{\A^{1}_{\ms{W}}}(\A^{1}_{\ms{W}})$ with $F(0)=1$, and a Fredholm 
hypersurface is a Zariski closed subspace of $\A^{1}_{\ms{W}}$ defined by 
$\{F=0\}$, where $F$ is a Fredholm series. 

\medskip
As in \cite[Theorem 4.2.2]{han}, an 
eigenvariety datum $\mf{O}$ has an associated eigenvariety. Before giving the construction, we record the following commutative algebra result (cf. \cite[Lemma 
2.2(1)]{tay}).

\begin{lemma}\label{nilpotence}
Let $A$, $B$ and $T$ be rings with $A$ and $B$ Noetherian and let $M$ be a finitely generated $A$-module. Assume that we have ring homomorphisms $f : A \ra B$ and $\psi_{A} : T \ra \End_{A}(M)$ and let $\psi_{B}$ be the composition of $\psi_{A}$ with the natural map $\End_{A}(M) \ra \End_{B}(M\otimes_{A}B)$ coming from $f$. Let $T_{A}$ (resp. $T_{B}$) be the $A$-subalgebra (resp. $B$-subalgebra) generated by the image of $\psi_{A}$ (resp. $\psi_{B}$). Then the natural $B$-linear map $T_{A}\otimes_{A}B \ra T_{B}$ is a surjection with nilpotent kernel in general, and if $f$ is flat then it is an isomorphism.
\end{lemma}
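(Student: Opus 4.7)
The plan is to dispatch surjectivity and the flat case quickly, and then prove nilpotence of the kernel by a support-theoretic argument. Surjectivity is immediate: the image of $T_A \otimes_A B$ in $T_B$ is a $B$-subalgebra containing $\psi_B(T)$, hence all of $T_B$. For the flat case, note that $M$ is finitely presented (as $A$ is Noetherian and $M$ is finitely generated), and flatness of $f$ gives the standard isomorphism $\End_A(M) \otimes_A B \xrightarrow{\sim} \End_B(M \otimes_A B)$; flatness also preserves the inclusion $T_A \hookrightarrow \End_A(M)$ under $-\otimes_A B$, so $T_A \otimes_A B$ embeds in $\End_B(M \otimes_A B)$ with image equal to the $B$-subalgebra $T_B$.

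For the general case, set $R := T_A \otimes_A B$ and $I := \ker(R \to T_B)$. The key preliminary observations are: (i) $T_A$ is finitely generated as an $A$-module, being a submodule of the finitely generated $A$-module $\End_A(M)$, so $R$ is finitely generated over the Noetherian ring $B$, hence Noetherian, and $M \otimes_A B = M \otimes_{T_A} R$ is finitely generated over $R$; (ii) since $T_B$ embeds tautologically into $\End_B(M \otimes_A B)$, the kernel $I$ coincides with $\Ann_R(M \otimes_A B)$. Noetherianness of $R$ then reduces nilpotence of $I$ to the statement $I \subseteq \mathfrak{Q}$ for every $\mathfrak{Q} \in \Spec R$, or equivalently $\mathfrak{Q} \in \Supp_R(M \otimes_A B) = V(I)$.

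To verify this for a fixed $\mathfrak{Q}$, set $\mathfrak{p} := \mathfrak{Q} \cap T_A$ under $T_A \to R$. Since $T_A$ acts faithfully on the finitely generated $T_A$-module $M$ by construction, $\Supp_{T_A}(M) = \Spec T_A$, so $M_\mathfrak{p} \neq 0$ and Nakayama's lemma yields $M \otimes_{T_A} \kappa(\mathfrak{p}) \neq 0$. The injection $T_A/\mathfrak{p} \hookrightarrow R/\mathfrak{Q}$ induces a field extension $\kappa(\mathfrak{p}) \hookrightarrow \kappa(\mathfrak{Q})$, along which non-vanishing of vector spaces is preserved, so $M \otimes_{T_A} \kappa(\mathfrak{Q}) \neq 0$. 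Using $M \otimes_A B = M \otimes_{T_A} R$ one identifies $M \otimes_{T_A} \kappa(\mathfrak{Q}) = (M \otimes_A B) \otimes_R \kappa(\mathfrak{Q})$, and a last application of Nakayama to the finitely generated $R_\mathfrak{Q}$-module $(M \otimes_A B)_\mathfrak{Q}$ gives $(M \otimes_A B)_\mathfrak{Q} \neq 0$, as required. The main subtlety is the non-flat case; its resolution hinges on recognising $I$ as the annihilator of $M \otimes_A B$ over $R$ and then reducing nilpotence to the essentially tautological observation that a faithful finitely generated module over a Noetherian ring has full support, a property that propagates to $R$ via the fibrewise identification above.
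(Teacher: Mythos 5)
Your proof is correct and follows essentially the same route as the paper's: reduce nilpotence of the kernel to the claim that $M\otimes_A B$ has full support over $R=T_A\otimes_A B$, identify the kernel with $\Ann_R(M\otimes_A B)$ via faithfulness of $M\otimes_A B$ over $T_B$, and deduce full support from the identification $M\otimes_A B \cong M\otimes_{T_A} R$ together with faithfulness of $M$ over $T_A$. The only difference is one of granularity: the paper invokes in a single line the standard fact that the support of a base change of a finite module is the preimage of the support, whereas you unpack that fact into its proof (passing to residue fields $\kappa(\mathfrak p)\hookrightarrow\kappa(\mathfrak Q)$ and applying Nakayama at both ends). Both are fine.
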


\begin{proof}
We have $B$-linear maps
$$ T\otimes_{\Z}B \ra \End_{A}(M)\otimes_{A}B \ra \End_{B}(M\otimes_{A}B) $$
induced by $\psi_{A}$ and $f$ respectively. The left map factors through $T_{A}\otimes_{A}B$ and $T_{B}$ is the image of the composition of the two maps. This gives the natural map and shows its surjectivity since $T\otimes_{\Z}B \ra T_{A}\otimes_{A}B$ is surjective. To prove that the kernel is nilpotent, it suffices to show that the support of the $T_{A}\otimes_{A}B$-module $M\otimes_{A}B$ is all of $\Spec(T_{A}\otimes_{A}B)$, since $M\otimes_{A}B$ is a faithful $T_{B}$-module by construction. Since $M\otimes_{A}B \cong M\otimes_{T_{A}}(T_{A}\otimes_{A}B)$, the support of $M\otimes_{A}B$ is the preimage of the support of $M$ under the natural map $\Spec(T_{A}\otimes_{A}B) \ra \Spec(T_{A})$. But $M$ is a faithful $T_{A}$-module by construction, so we get what we want.

\medskip
Now assume that $f$ is flat. Then the natural map $\End_{A}(M)\otimes_{A}B \ra \End_{B}(M\otimes_{A}B)$ is an isomorphism and $T_{A}\otimes_{A}B$ is the image of $T\otimes_{\Z}B \ra \End_{A}(M)\otimes_{A}B$, which gives that $T_{A}\otimes_{A}B \ra T_{B}$ is an isomorphism.
\end{proof}

Let us now record the construction of the eigenvariety of an eigenvariety datum.

\begin{proposition}\label{eigenvariety}
Given an eigenvariety datum $\mf{O}=(\ms{Z},\ms{H},\mb{T},\psi)$, there is a pseudorigid space 
$\ms{X}=\ms{X}(\mf{O})$ over $\ok$, together with a finite morphism $\pi: 
\ms{X} \ra \ms{Z}$, a $\Zp$-algebra homomorphism $\phi_{\ms{X}}: \mb{T} \ra 
\oo(\ms{X})$, and a faithful coherent $\oo_{\ms{X}}$-module $\ms{H}^{\dagger}$. 
There is 
a canonical isomorphism $\pi_{\ast}\ms{H}^{\dagger}\cong \ms{H}$, which is 
compatible with the actions of $\mb{T}$ on both sides (via $\phi_{\ms{X}}$ and 
$\psi$, respectively).

The space $\ms{X}$ is characterized by the following local description: for $U 
\subset \ms{Z}$ a pseudorigid affinoid open we have $\ms{X}_U = \Spa(\ms{T}_U)$ 
where $\ms{T}_{U}$ is the $\oo_{\ms{Z}}(U)$-subalgebra of 
$\End_{\oo_{\ms{Z}}(U)}(\ms{H}(U))$ generated by the image of $\mb{T}$. Note that 
$\ms{H}(U)$ is canonically a $\ms{T}_U$-module, and this gives 
$\ms{H}^{\dagger}$ over $\ms{X}_U$.
\end{proposition}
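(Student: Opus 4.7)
The plan is to construct $\ms{X}$ by the local recipe given in the statement and then glue. Fix a pseudorigid affinoid open $U=\Spa(A)\subset \ms{Z}$, write $M=\ms{H}(U)$, which is a finitely generated $A$-module by coherence of $\ms{H}$, and let $\ms{T}_U\subset \End_A(M)$ be the $A$-subalgebra generated by the image of $\mb{T}$. Since $A$ is Noetherian, $\End_A(M)$ is a finitely generated $A$-module, hence so is $\ms{T}_U$. Endowing $\ms{T}_U$ with its natural topology as a finite $A$-module, it becomes a Tate ring formally of finite type over $\ok$, and by Proposition~\ref{ps2}(\ref{ps2finite}) the morphism $\ms{X}_U:=\Spa(\ms{T}_U)\to U$ is a finite morphism of pseudorigid affinoid spaces.

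Next I would verify that this construction is compatible with rational localization, so that it glues as $U$ varies. If $V=\Spa(B)\subset U$ is a rational subset, coherence of $\ms{H}$ gives $\ms{H}(V)\cong M\otimes_A B$, and the map $A\to B$ is flat. Applying the flat case of Lemma~\ref{nilpotence} to $A\to B$ and $M$ yields a canonical isomorphism $\ms{T}_U\otimes_A B \cong \ms{T}_V$ of $B$-algebras. Since finite morphisms of pseudorigid spaces are compatible with rational localization on the base (the pullback of a finite morphism along an open immersion is again the relative $\Spa$ of the finite algebra), this identifies the base change of $\ms{X}_U\to U$ along $V\hookrightarrow U$ with $\ms{X}_V\to V$. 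Thus the affinoid pieces $\ms{X}_U$ glue uniquely into a pseudorigid space $\ms{X}$ equipped with a finite morphism $\pi:\ms{X}\to \ms{Z}$.

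The structure map $\phi_\ms{X}:\mb{T}\to \oo(\ms{X})$ is obtained by gluing the tautological algebra maps $\mb{T}\to \ms{T}_U=\oo_\ms{X}(\ms{X}_U)$, which are compatible under the isomorphisms of the previous paragraph by construction. To produce $\ms{H}^\dagger$, note that each $M=\ms{H}(U)$ is naturally a finitely generated $\ms{T}_U$-module (and a faithful one, by the very definition of $\ms{T}_U$), hence gives a coherent sheaf on $\ms{X}_U$. The gluing of these sheaves to a coherent $\oo_\ms{X}$-module $\ms{H}^\dagger$ follows from the isomorphism $\ms{H}(V)\cong \ms{H}(U)\otimes_{\ms{T}_U}\ms{T}_V$, which holds because the right-hand side equals $M\otimes_A B$. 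Faithfulness is local and immediate from the construction. The identification $\pi_*\ms{H}^\dagger\cong \ms{H}$ compatible with the $\mb{T}$-actions then holds by design on each $\ms{X}_U$ and therefore globally.

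The only subtle point, and the one that requires the commutative algebra Lemma~\ref{nilpotence}, is the compatibility $\ms{T}_U\otimes_A B \cong \ms{T}_V$ under rational localization; without flatness of $A\to B$ one would only obtain a surjection with nilpotent kernel, which would not suffice to glue into a well-defined space. Everything else (finiteness, the formally-of-finite-type condition on $\ms{T}_U$, faithfulness of $\ms{H}^\dagger$, and compatibility of the Hecke actions) is essentially formal once the local model is in place.
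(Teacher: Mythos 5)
Your proof follows the same approach as the paper: construct $\ms{X}_U = \Spa(\ms{T}_U)$ locally over each affinoid open $U\subset\ms{Z}$, use the flat case of Lemma~\ref{nilpotence} together with flatness of rational localization to verify compatibility, and glue. The paper's proof is only a brief sketch, and your write-up correctly fills in the details (in particular the role of Proposition~\ref{ps2}(\ref{ps2finite}) and the identification $\ms{H}(V)\cong\ms{H}(U)\otimes_{\ms{T}_U}\ms{T}_V$).
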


\begin{proof}
The proof is (essentially) the same as the proof of \cite[Theorem 4.2.2]{han}, 
we sketch it for completeness. For $U\sub \ms{Z}$ 
affinoid open pseudorigid, $\ms{T}_{U}$ is commutative and finite over 
$\oo_{\ms{Z}}(U)$, and hence is a Tate ring formally of finite type over $\ok$. 
The space $\ms{X}_{U}=\Spa(\ms{T}_{U})$ carries a canonical finite map 
$\ms{X}_{U} \ra U$ and $\ms{H}(U)$ is a finitely generated 
$\ms{T}_{U}$-module. By Lemma \ref{nilpotence} and flatness of rational 
localization for affinoid pseudorigid spaces over $\ok$, these 
constructions glue together and satisfy the assertions of the theorem.   
\end{proof}

From Lemma \ref{nilpotence} we get the following compatibility of the eigenvariety construction with base change.

\begin{proposition}\label{basechange}
Let $\mf{O}=(\ms{Z},\ms{H},\mb{T},\psi)$ be an eigenvariety datum with 
eigenvariety $\ms{X}$. Let $f : \ms{Z}^{\prime} \ra \ms{Z}$ be a map of 
pseudorigid spaces over $\ok$. Form the eigenvariety datum 
$$\mf{O}^{\prime}=(\ms{Z}^{\prime}, \ms{H}^{\prime}=f^{\ast}\ms{H}, \mb{T}, \psi^{\prime}), $$
where $\psi^{\prime}$ is the composition of $\psi$ with the natural map $\End_{\oo_{\ms{Z}}}(\ms{H}) \ra \End_{\oo_{\ms{Z}^{\prime}}}(\ms{H}^{\prime})$. Let $\ms{X}^{\prime}$ be the eigenvariety attached to $\mf{O}^{\prime}$. Then there is a natural map $\ms{X}^{\prime} \ra \ms{X}$ over $\ms{Z}$, and the induced map $\ms{X}^{\prime} \ra \ms{X}\times_{\ms{Z}}\ms{Z}^{\prime}$ induces an isomorphism $(\ms{X}^{\prime})^{red} \ra (\ms{X}\times_{\ms{Z}}\ms{Z}^{\prime})^{red}$. If $f$ is flat, then $\ms{X}^{\prime} \ra \ms{X}\times_{\ms{Z}}\ms{Z}^{\prime}$ is an isomorphism.
\end{proposition}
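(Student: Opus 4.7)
The plan is to work locally on $\ms{Z}$ and $\ms{Z}^{\prime}$ and apply Lemma \ref{nilpotence} directly. Choose affinoid pseudorigid opens $U=\Spa(A)\subset \ms{Z}$ and $U^{\prime}=\Spa(B)\subset \ms{Z}^{\prime}$ with $f(U^{\prime})\subset U$, and set $M=\ms{H}(U)$, so that $(f^{\ast}\ms{H})(U^{\prime})\cong M\otimes_{A}B$. Let $\ms{T}_{U}\subset \End_{A}(M)$ be the $A$-subalgebra generated by the image of $\mb{T}$, as in Proposition \ref{eigenvariety}, and define $\ms{T}^{\prime}_{U^{\prime}}\subset \End_{B}(M\otimes_{A}B)$ analogously. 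Both $\ms{T}_{U}$ and $\ms{T}^{\prime}_{U^{\prime}}$ are Tate rings formally of finite type over $\ok$, finite over $A$ and $B$ respectively; in particular the algebraic tensor product $\ms{T}_{U}\otimes_{A}B$ agrees with the completed tensor product, so $\Spa(\ms{T}_{U}\otimes_{A}B)$ represents the fiber product $\ms{X}_{U}\times_{U}U^{\prime}$ in pseudorigid spaces.

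The next step is to apply Lemma \ref{nilpotence} to the ring map $A\rightarrow B$ together with the $\mb{T}$-action on $M$. This yields a natural surjection of $B$-algebras
\[
\ms{T}_{U}\otimes_{A}B \twoheadrightarrow \ms{T}^{\prime}_{U^{\prime}}
\]
whose kernel is nilpotent in general, and which is an isomorphism whenever $A\rightarrow B$ is flat. Taking $\Spa$ gives a closed immersion
\[
\ms{X}^{\prime}_{U^{\prime}} \hookrightarrow \ms{X}_{U}\times_{U}U^{\prime}
\]
cut out by a nilpotent ideal, and thus an isomorphism on underlying reduced adic spaces; and when $f$ is flat, so that $A\rightarrow B$ is flat for every such pair of rational affinoids, this is already an isomorphism.

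Finally I would verify that these local morphisms glue to a single morphism $\ms{X}^{\prime}\rightarrow \ms{X}\times_{\ms{Z}}\ms{Z}^{\prime}$ over $\ms{Z}$; composition with the projection to $\ms{X}$ gives the natural map $\ms{X}^{\prime}\rightarrow \ms{X}$. Gluing amounts to checking that for rational inclusions compatible with $f$, the formation of the Hecke subalgebras $\ms{T}_{U}$ and $\ms{T}^{\prime}_{U^{\prime}}$ commutes with restriction to smaller rationals, which is precisely the flat case of Lemma \ref{nilpotence} applied to rational localizations (and is already implicit in the construction of $\ms{X}$ in Proposition \ref{eigenvariety}). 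The reduced-isomorphism claim follows locally from the nilpotent-kernel statement via Proposition \ref{ps7} and the discussion of the reduced structure, and the flat case is immediate from the second part of Lemma \ref{nilpotence}. The only real technical nuisance, rather than a genuine obstacle, is this compatibility-with-rational-localization bookkeeping; the substantive input is entirely encapsulated in Lemma \ref{nilpotence}.
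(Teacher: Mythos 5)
Your proposal is correct and follows the same route as the paper: reduce to the affinoid case using the local description of the eigenvariety from Proposition \ref{eigenvariety}, then apply Lemma \ref{nilpotence} to get the surjection with nilpotent kernel (an isomorphism when $f$ is flat). The paper states this in two sentences; you have simply filled in the bookkeeping the paper leaves implicit.
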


\begin{proof}
By the local nature of the construction of eigenvarieties in Proposition \ref{eigenvariety}, the assertion is local both on $\ms{Z}^{\prime}$ and $\ms{Z}$, so we may assume that they are both affinoid pseudorigid spaces over $\ok$. Then the proposition follows directly from Lemma \ref{nilpotence}.
\end{proof}

We single out of a special case of Proposition \ref{basechange} which 
characterises the points of eigenvarieties.

\begin{corollary}\label{points}
Let $\mf{O}=(\ms{Z},\ms{H},\mb{T},\psi)$ be an eigenvariety datum with eigenvariety $\pi : \ms{X} \ra \ms{Z}$. Fix $z\in \Max(\ms{Z})$. Then the set $\pi^{-1}(z)\sub \Max(\ms{X})$ is in natural bijection with the systems of Hecke eigenvalues appearing in the fibre $\ms{H}(z)$, i.e. the maximal ideals lying above the kernel of the natural map $\mb{T}\otimes_{\Zp}k(z) \ra \End_{k(z)}(\ms{H}(z))$.
\end{corollary}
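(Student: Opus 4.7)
The plan is to deduce the corollary from Proposition \ref{basechange} applied to the inclusion of the single point $z$ into $\ms{Z}$, which reduces the statement to the case where the base is a single-point pseudorigid space $\Spa(k(z))$, and then to unwind the local description of the eigenvariety.

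First I would work locally: pick an open affinoid pseudorigid neighbourhood $U=\Spa(R)\sub \ms{Z}$ of $z$ and let $\mf{m}_{z}\sub R$ be the maximal ideal corresponding to $z$. By Lemma \ref{ps4} the quotient $k(z)=R/\mf{m}_{z}$ is a complete discretely valued field and $\Spa(k(z))$ is a single-point pseudorigid space over $\ok$, canonically embedded as the point $z\in U\sub \ms{Z}$ (and being a quotient of $R$, it is of the type to which Proposition \ref{basechange} applies). Pulling back $\mf{O}$ along this closed immersion yields the eigenvariety datum $\mf{O}'=(\Spa(k(z)),\ms{H}(z),\mb{T},\psi')$, and Proposition \ref{basechange} produces a natural map $\ms{X}'=\ms{X}(\mf{O}')\to \ms{X}$ which induces an isomorphism $(\ms{X}')^{red}\to (\ms{X}\times_{\ms{Z}}\Spa(k(z)))^{red}$. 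Since $|\Spa(k(z))|=\{z\}$, the underlying set of $\ms{X}'$ is in canonical bijection with $\pi^{-1}(z)\sub |\ms{X}|$.

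Next, I would apply the local description in Proposition \ref{eigenvariety} to the datum $\mf{O}'$: one has $\ms{X}'=\Spa(T_{k(z)})$, where $T_{k(z)}$ is the $k(z)$-subalgebra of $\End_{k(z)}(\ms{H}(z))$ generated by the image of $\mb{T}$. Because $\ms{H}(z)$ is a finite-dimensional $k(z)$-vector space, $T_{k(z)}$ is a finite, hence Artinian, $k(z)$-algebra, so every point of $|\Spa(T_{k(z)})|$ is maximal and the underlying set is canonically $\Max(T_{k(z)})$. By construction $T_{k(z)}$ is the image of the natural map $\mb{T}\otimes_{\Zp}k(z)\to \End_{k(z)}(\ms{H}(z))$, so its maximal ideals are in bijection with the maximal ideals of $\mb{T}\otimes_{\Zp}k(z)$ lying above the kernel of this map, as desired.

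I do not anticipate any real obstacle: the argument is a formal consequence of Proposition \ref{basechange} and the local construction of the eigenvariety. The only small subtlety worth flagging is that although the closed immersion $\Spa(k(z))\hookrightarrow\ms{Z}$ is not flat in general (so the base change of eigenvarieties need not agree on the nose), one only needs the set-theoretic bijection, and this is already provided by the reduced isomorphism in Proposition \ref{basechange}; equivalently, by Lemma \ref{nilpotence} the nilpotent kernel in the non-flat case does not affect the underlying set of maximal ideals.
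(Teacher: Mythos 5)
Your proposal is correct and takes essentially the same route as the paper: the paper's proof is the one-line observation that the corollary follows from Proposition \ref{basechange} applied to the closed immersion $\Spa(k(z))\hookrightarrow\ms{Z}$, and your argument is a careful unpacking of exactly that step (identifying $\ms{X}'$ with $\Spa(T_{k(z)})$ via the local description of Proposition \ref{eigenvariety}, noting that nilreduction is irrelevant at the level of underlying sets, and reading off the maximal ideals of the Artinian algebra $T_{k(z)}$).
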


\begin{proof}
This follows from applying Proposition \ref{basechange} with $f$ the closed immersion $\ms{Z}^{\prime}=z \hookrightarrow \ms{Z}$.
\end{proof}

We finish with a simple reconstruction theorem for eigenvariety data.

\begin{proposition}\label{reconstruction}
Let $\pi : \ms{X} \ra \ms{Z}$ be a finite morphism of pseudorigid spaces and let $\mb{T}$ be a $\Zp$-algebra. Assume that there is a ring homomorphism $\phi : \mb{T} \ra \oo(\ms{X})$ such that $\oo_{\ms{X}}$ is generated by the image of $\phi$ over $\oo_{\ms{Z}}$, and assume that $\ms{H}^{\dagger}$ is a faithful coherent $\oo_{\ms{X}}$-module. We may form an eigenvariety datum
$$ \mf{O}=(\ms{Z},\ms{H},\mb{T},\psi), $$
where $\ms{H}=\pi_{\ast}\ms{H}^{\dagger}$ and $\psi$ is the composition $\mb{T} \overset{\phi}{\ra} \oo(\ms{X}) \ra \End_{\oo_{\ms{Z}}}(\ms{H})$. Then $\pi : \ms{X} \ra \ms{Z}$ is the eigenvariety attached to $\mf{O}$, and $\phi=\phi_{\ms{X}}$.
\end{proposition}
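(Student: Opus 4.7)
The proof is fundamentally a local affair on $\ms{Z}$, and I would begin by reducing to the case of an affinoid pseudorigid open $U = \Spa(A) \subset \ms{Z}$. Since $\pi: \ms{X} \to \ms{Z}$ is finite, we have $\pi^{-1}(U) = \Spa(B)$ with $B$ finite as an $A$-algebra (and $B = \oo_{\ms{X}}(\pi^{-1}(U))$). Write $M = \ms{H}^{\dagger}(\pi^{-1}(U))$, a finitely generated $B$-module which is faithful (by hypothesis), and note that by definition $\ms{H}(U) = M$ as an $A$-module via the composite $A \to B \to \End_A(M)$.

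The key observation is then that the action of $B$ on $M$ defines an $A$-algebra map $B \to \End_A(M)$ that is injective, because $M$ is a faithful $B$-module. The hypothesis that $\oo_{\ms{X}}$ is generated over $\oo_{\ms{Z}}$ by the image of $\phi$ translates, locally, to the statement that $B$ is generated as an $A$-algebra by $\phi(\mb{T}) \subset B$. Consequently, the image of $B \hookrightarrow \End_A(M)$ is precisely the $A$-subalgebra generated by $\psi(\mb{T})$, i.e.\ it equals $\ms{T}_U$ from Proposition \ref{eigenvariety}. This produces a canonical $A$-algebra isomorphism $\ms{T}_U \xrightarrow{\sim} B$ under which the tautological map $\mb{T} \to \ms{T}_U$ corresponds to $\phi|_{\pi^{-1}(U)}$.

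These isomorphisms are manifestly natural with respect to restriction along inclusions $U' \subset U$ of affinoid opens, since both sides are built by the same universal recipe from $(A, M, \phi)$ and base change is compatible with rational localization (which is flat, so Lemma \ref{nilpotence} applies cleanly). Hence the local identifications $\Spa(\ms{T}_U) \cong \pi^{-1}(U)$ glue to a global isomorphism $\ms{X}(\mf{O}) \cong \ms{X}$ over $\ms{Z}$, carrying $\ms{H}^{\dagger}$ on one side to the module of the same name on the other, and intertwining $\phi_{\ms{X}}$ with $\phi$.

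The main point requiring care — though it is essentially bookkeeping rather than a genuine obstacle — is checking that the sheaf-theoretic hypothesis ``$\oo_{\ms{X}}$ is generated by $\phi(\mb{T})$ over $\oo_{\ms{Z}}$'' gives the desired algebraic statement at the level of sections $B = \oo_{\ms{X}}(\pi^{-1}(U))$; this is automatic because $B$ is a finite $A$-module, so any $A$-subalgebra containing a generating set coincides with $B$ without any topological closure issues. Everything else is a direct unwinding of the definitions in Proposition \ref{eigenvariety}.
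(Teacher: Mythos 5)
Your proof is correct and follows essentially the same route as the paper: reduce to the affinoid case $\ms{Z}=\Spa(A)$, $\ms{X}=\Spa(B)$, then use faithfulness of $M=\ms{H}^\dagger(\pi^{-1}(U))$ over $B$ to see that $B\hookrightarrow\End_A(M)$, and observe that the image is the $A$-subalgebra generated by $\psi(\mb{T})$ because $B$ is generated over $A$ by $\phi(\mb{T})$. The paper phrases this as a factorization $\mb{T}\otimes_{\Zp}A \twoheadrightarrow B \hookrightarrow \End_B(M) \hookrightarrow \End_A(M)$, which is the same argument; your additional remarks on gluing and on the absence of topological-closure issues are harmless elaborations of what the paper leaves implicit.
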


\begin{proof}
The assertion is local on $\ms{Z}$, so we may assume that $\ms{Z}=\Spa(A)$, 
$\ms{X}=\Spa(T)$, that $\ms{H}^{\dagger}$ is the coherent sheaf attached to a 
finitely generated faithful $T$-module $H$, and that the map 
$\mb{T}\otimes_{\Zp}A \ra T$ induced by $\phi$ is surjective. The eigenvariety 
attached to $\mf{O}$ is then the adic spectrum of the image of the map 
$\mb{T}\otimes_{\Zp}A \ra \End_{A}(H)$. This map factors as
$$ \mb{T} \otimes_{\Zp}A \ra T \ra \End_{T}(H) \ra \End_{A}(H) $$
and the first map is surjective and the second and third are injective (using that $H$ is a faithful $T$-module), so this image is (canonically isomorphic to) $T$ and the identifications of $\pi$ and $\phi_{\ms{X}}$ follow easily.
\end{proof}

We remark that, as a result of this, any Zariski closed subset $\ms{X}^{\prime}$ of the eigenvariety $\ms{X}$ of an eigenvariety datum  is naturally the eigenvariety of an eigenvariety datum.

\subsection{The interpolation theorem}

We are now ready to prove our interpolation theorem (which could also be considered as a rigidity theorem). If $\ms{U}$ is a pseudorigid space over $\ok$ and $\ms{M}$ is a coherent $\oo_{\ms{U}}$-module, then we continue to write $\ms{M}(u)$ for the fibre of $\ms{M}$ at any $u\in \Max(\ms{U})$, which is a vector space over the residue field $k(u)$.

\begin{theorem}\label{optimalint}
Let $\mf{O}_{i}=(\ms{Z}_{i},\ms{H}_{i},\mb{T}_{i},\psi_{i})$, for $i=1,2$, be eigenvariety data. Assume that we have the following data:
\begin{itemize}
\item A morphism $j : \ms{Z}_{1} \ra \ms{Z}_{2}$ of adic spaces;

\item A $\Zp$-algebra homomorphism $\sigma : \mb{T}_{2} \ra \mb{T}_{1}$;

\item A subset $\ms{X}^{cl}\sub \Max(\ms{X}_{1})$ such that the 
$\mb{T}_{2}$-eigensystem of $x$ (i.e. the $\mb{T}_{1}$-eigensystem of $x$ 
composed with $\sigma$) appears in $\ms{H}_{2}(j(\pi_{1}(x)))$ for all $x\in 
\ms{X}^{cl}$.
\end{itemize}
Let $\ol{\ms{X}}$ denote the Zariski closure of $\ms{X}^{cl}$ in $\ms{X}_{1}$, with 
its induced reduced structure. Then there is a canonical morphism 
$$ i : \ol{\ms{X}} \ra \ms{X}_{2} $$
lying over $j : \ms{Z}_{1} \ra \ms{Z}_{2}$ such that $\phi_{\ol{\ms{X}}}\circ \sigma = i^{\ast}\circ \phi_{\ms{X}_{2}}$. The morphism $i$ inherits the following properties from $j$:

\begin{itemize}
\item If $j$ is (partially) proper (resp. finite), then $i$ is (partially) proper (resp. finite);
 
\item If $j$ is a closed immersion and $\sigma$ is a surjection, then $i$ is a closed immersion.
\end{itemize}
\end{theorem}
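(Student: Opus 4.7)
My strategy is to realise $\mathscr{X}_1$ (with Hecke action restricted via $\sigma$) and the pullback of $\mathscr{X}_2$ via $j$ as Zariski closed subspaces of a common larger eigenvariety over $\mathscr{Z}_1$, and then exploit the hypothesis together with Zariski density to force $\overline{\mathscr{X}}$ into the intersection. This avoids the representation-theoretic machinery of previous interpolation theorems in the literature.

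Concretely, I would form the eigenvariety datum
\[
\mathfrak{O} = \bigl(\mathscr{Z}_1,\; \mathscr{H}_1 \oplus j^{\ast}\mathscr{H}_2,\; \mathbb{T}_2,\; \psi\bigr),
\]
with $\psi$ acting by $\psi_1\circ\sigma$ on the first summand and by the pullback $\psi_2^{\prime}$ of $\psi_2$ (in the sense of Proposition \ref{basechange}) on the second. Denote by $\mathscr{X}$ the associated eigenvariety. Locally over an affinoid $U\sub\mathscr{Z}_1$, the image of $\mathbb{T}_2\otimes_{\Zp}\oo(U)$ is block-diagonal in $\End(\mathscr{H}_1(U))\oplus\End(j^{\ast}\mathscr{H}_2(U))$ and projects surjectively onto the Hecke image rings of the summand data $\widetilde{\mathfrak{O}}_1=(\mathscr{Z}_1,\mathscr{H}_1,\mathbb{T}_2,\psi_1\circ\sigma)$ and $\mathfrak{O}_2^{\prime}=(\mathscr{Z}_1,j^{\ast}\mathscr{H}_2,\mathbb{T}_2,\psi_2^{\prime})$. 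By Proposition \ref{reconstruction} this yields closed immersions $\iota_1:\widetilde{\mathscr{X}}_1\hookrightarrow\mathscr{X}$ and $\iota_2:\mathscr{X}_2^{\prime}\hookrightarrow\mathscr{X}$ after gluing. The inclusion of the image of $\mathbb{T}_2$ into the image of $\mathbb{T}_1$ in $\End(\mathscr{H}_1(U))$ furthermore produces a canonical finite morphism $\tau:\mathscr{X}_1\to\widetilde{\mathscr{X}}_1$, which is an isomorphism when $\sigma$ is surjective.

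Next, I would show that the composite $\overline{\mathscr{X}}\hookrightarrow\mathscr{X}_1\xrightarrow{\tau}\widetilde{\mathscr{X}}_1\xrightarrow{\iota_1}\mathscr{X}$ factors through $\iota_2(\mathscr{X}_2^{\prime})$. By Corollary \ref{points} a maximal point of $\mathscr{X}$ at weight $z\in\mathscr{Z}_1$ lies in $\iota_2(\mathscr{X}_2^{\prime})$ exactly when its $\mathbb{T}_2$-eigensystem appears in $(j^{\ast}\mathscr{H}_2)(z)=\mathscr{H}_2(j(z))\otimes_{k(j(z))}k(z)$, which is precisely the hypothesis at every $x\in\mathscr{X}^{cl}$ with $z=\pi_1(x)$. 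Since $\iota_2(\mathscr{X}_2^{\prime})$ is Zariski closed in $\mathscr{X}$, its preimage in $\overline{\mathscr{X}}$ is Zariski closed (Corollary \ref{coh2}) and contains the Zariski dense subset $\mathscr{X}^{cl}$, hence equals $\overline{\mathscr{X}}$. This produces a canonical map $\overline{\mathscr{X}}\to\mathscr{X}_2^{\prime}$; since $\overline{\mathscr{X}}$ is reduced it factors through $(\mathscr{X}_2^{\prime})^{red}=(\mathscr{X}_2\times_{\mathscr{Z}_2}\mathscr{Z}_1)^{red}$ by Proposition \ref{basechange}, and composing with the projection to $\mathscr{X}_2$ yields $i$. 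The compatibility $\phi_{\overline{\mathscr{X}}}\circ\sigma=i^{\ast}\circ\phi_{\mathscr{X}_2}$ is then a direct consequence of tracing how $\mathbb{T}_2$ acts on both sides through the joint eigenvariety.

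For the inherited properties, I would factor $i$ as $\overline{\mathscr{X}}\xrightarrow{a}\mathscr{X}_2\times_{\mathscr{Z}_2}\mathscr{Z}_1\xrightarrow{b}\mathscr{X}_2$. The map $b$ is the base change of $j$ and so inherits (partial) properness, finiteness, or the property of being a closed immersion directly from $j$. The map $a$ is a $\mathscr{Z}_1$-morphism between two spaces finite over $\mathscr{Z}_1$ (using that $\mathscr{X}_2\to\mathscr{Z}_2$ is finite and that $\overline{\mathscr{X}}\hookrightarrow\mathscr{X}_1\to\mathscr{Z}_1$ is finite), and is therefore itself finite. Composing gives the (partial) properness and finiteness assertions. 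When $\sigma$ is also surjective, $\tau$ becomes an isomorphism, so the composition $\overline{\mathscr{X}}\to\mathscr{X}$ is a closed immersion, whence $\overline{\mathscr{X}}\to\mathscr{X}_2^{\prime}$ and (using reducedness of $\overline{\mathscr{X}}$) the map $a$ are closed immersions; composition with the closed immersion $b$ gives the last claim. The main obstacle I anticipate is the careful setup of the joint eigenvariety and the verification that the block-diagonal projections of Hecke image rings genuinely give closed immersions into $\mathscr{X}$ (including the passage from the local surjections to the global closed immersions via Proposition \ref{reconstruction}); once that is in hand the rest follows smoothly from Zariski density and the formal machinery of eigenvariety data.
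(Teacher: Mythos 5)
Your proposal is essentially the same argument as the paper's: the key trick in both is to form a joint eigenvariety datum using the direct sum of coherent sheaves over a common base, realize both relevant eigenvarieties as Zariski closed subspaces of the resulting big eigenvariety, and then use Zariski density of $\ms{X}^{cl}$ to pin the closure $\ol{\ms{X}}$ into the copy coming from $\ms{H}_2$. You do the preliminary reductions (replacing $\mb{T}_1$ by $\mb{T}_2$ via $\sigma$, and pulling $\mf{O}_2$ back along $j$) implicitly and all at once by forming a single datum $(\ms{Z}_1, \ms{H}_1 \oplus j^*\ms{H}_2, \mb{T}_2, \psi)$, whereas the paper carries them out as two explicit reduction steps before forming the direct-sum datum over a common $\ms{Z}$; both are equivalent. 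Your verification that the two summands give Zariski closed subspaces (via the block-diagonal projections and Proposition \ref{reconstruction}) is the same observation the paper makes about $\pi_{3,*}\oo_{\ms{X}_3}$ having $\pi_{1,*}\oo_{\ms{X}_1}$ and $\pi_{2,*}\oo_{\ms{X}_2}$ as quotients, and your factorization $i = b\circ a$ for the inherited properties matches the paper's factorization into a finite map, a closed immersion, and the base-change projection. The proof is correct and takes the same route.
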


\begin{proof}
We start with a series of reduction steps. First, we form the eigenvariety datum 
$$ \mf{O}_{1}^{\sigma}=(\ms{Z}_{1},\ms{H}_{1},\mb{T}_{2},\psi_{1}\circ \sigma) $$
with corresponding eigenvariety $\pi_{1}^{\sigma} : \ms{X}_{1}^{\sigma} \ra \ms{Z}_{1}$. If $U\sub \ms{Z}_{1}$ is an open affinoid, we have a natural inclusion $\oo_{\ms{X}_{1}^{\sigma}}((\pi_{1}^{\sigma})^{-1}(U)) \sub \oo_{\ms{X}_{1}}(\pi_{1}^{-1}(U)) $ directly from the definitions, and they glue together to a finite dominant (in fact surjective) map
$ f : \ms{X}_{1} \ra \ms{X}_{1}^{\sigma}$ over $\ms{Z}_{1}$. Note that if $\sigma$ is a surjection, then the inclusion in the previous sentence is an equality, and $f$ is an equality. Form the analogue $\ol{\ms{X}}^{\sigma}$ of $\ol{\ms{X}}$ for $\ms{X}_{1}^{\sigma}$. One checks easily that $f(\ol{\ms{X}})\sub \ol{\ms{X}}^{\sigma}$, so it suffices to prove the theorem after replacing $\mf{O}_{1}$ by $\mf{O}_{1}^{\sigma}$. In other words, we may assume that $\mb{T}_{1}=\mb{T}_{2}=:\mb{T}$.
\medskip

Next, we pull back the eigenvariety datum $\mf{O}_{2}$ along $j :\ms{Z}_{1} \ra \ms{Z}_{2}$ to reduce to the case $\ms{Z}_{1}=\ms{Z}_{2}=:\ms{Z}$, $j=id$, by Proposition \ref{basechange}. To check that this reduction works, note that if $x\in \ms{X}^{cl}$, then the $\mb{T}$-eigensystem of $x$ appears in $\ms{H}_{2}(j(\pi_{1}(x))$ if and only if it appears in $(j^{\ast}\ms{H}_{2})(\pi_{1}(x))$, since $(j^{\ast}\ms{H}_{2})(\pi_{1}(x)) = \ms{H}_{2}(j(\pi_{1}(x))\otimes_{k(j(\pi_{1}(x)))}k(\pi_{1}(x))$.

\medskip
Having done these reductions, we form the eigenvariety datum
$$ \mf{O}_{3}=(\ms{Z},\ms{H}_{3}=\ms{H}_{1}\oplus \ms{H}_{2},\mb{T},\psi_{3}=(\psi_{1},\psi_{2})). $$
Let $\pi_{3}: \ms{X}_{3} \ra \ms{Z}$ be the associated eigenvariety; it has both $\ms{X}_{1}$ and $\ms{X}_{2}$ appearing as Zariski closed subspaces in it since the coherent $\oo_{\ms{Z}}$-algebra $\pi_{3,\ast}\oo_{\ms{X}_{3}}$ has both $\pi_{1,\ast}\oo_{\ms{X}_{1}}$ and $\pi_{2,\ast}\oo_{\ms{X}_{2}}$ naturally as quotients (this follows by examining the construction in Proposition \ref{eigenvariety}). We need to show that $\ms{X}^{cl}\sub \ms{X}_{2}$; if so then $\ol{\ms{X}}\sub \ms{X}_{2}$ since $\ms{X}_{2}$ is Zariski closed in $\ms{X}_{3}$. Let $x\in \ms{X}^{cl}$ and set $z=\pi_{3}(x)=\pi_{1}(x)$. By Corollary \ref{points}, we need to show that the $\mb{T}$-eigensystem of $x$ appears in $\ms{H}_{2}(z)$. But this is exactly our assumption. This establishes the existence of $i$. For uniqueness, we can reduce to the case when $\ms{Z}_{1}$ and $\ms{Z}_{2}$ are both affinoid (since equality of morphisms can be checked locally on the source), and then the requirement $\phi_{\ol{\ms{X}}}\circ \sigma = i^{\ast}\circ \phi_{\ms{X}_{2}}$ uniquely determines $i^{\ast}$, and hence $i$, since the image of $\phi_{\ol{\ms{X}}}$ generates $\oo(\ol{\ms{X}})$ over $\oo(\ms{Z}_{1})$.

\medskip
It remains to prove that $i$ inherits properties from $j$. If we summarize the construction, $i$ is a composition of a finite map $\ol{\ms{X}} \ra \ol{\ms{X}}^{\sigma}$, a closed immersion $\ol{\ms{X}}^{\sigma} \ra (\ms{X}_{2}\times_{\ms{Z}_{2}}\ms{Z}_{1})^{red}$ and the canonical morphism $(\ms{X}_{2}\times_{\ms{Z}_{2}}\ms{Z}_{1})^{red} \ra \ms{X}_{2}$ coming from the projection. If $j$ is (partially) proper (resp. finite), then $(\ms{X}_{2}\times_{\ms{Z}_{2}}\ms{Z}_{1})^{red} \ra \ms{X}_{2}$ is (partially) proper (resp. finite), and hence the composition is (partially) proper (resp. finite, since these properties are stable under base change and composition), proving the first assertion. If $\sigma$ is surjective, then $\ol{\ms{X}} \ra \ol{\ms{X}}^{\sigma}$ is an isomorphism (as noted above), so if $j$ is a closed immersion, then $(\ms{X}_{2}\times_{\ms{Z}_{2}}\ms{Z}_{1})^{red} \ra \ms{X}_{2}$ is a closed immersion and hence $i$ is a closed immersion.
\end{proof}

\begin{remark}\label{rem:compareinterp}
We should note that this theorem is not, strictly speaking, a generalization of 
\cite[Theorem 5.1.6]{han}. That theorem assumes a certain divisiblity of 
determinants instead of the assumption on eigensystems in our theorem. This 
divisibility is a weaker assumption; from such a result one typically deduces a 
result about eigensystems by a separation of eigenvalues argument. In practice 
(e.g. attempts to interpolate known cases of Langlands functoriality) this 
separation of eigenvalues has been done, so we think that our slightly stronger 
assumption is natural.

With this caveat, our theorem appears to be essentially optimal, and the (rather elementary) method of proof appears to be new. We note in particular that the use of the global geometry of the eigenvariety instead of a reduction to affinoids eliminates the need for $\ms{X}^{cl}$ to be \emph{very Zariski dense}. It should be noted, however, that the only technique currently known (to the authors) to control the Zariski closure of interesting sets $\ms{X}^{cl}$ occurring in practice is to show that they are very Zariski dense inside a union of irreducible components of $\ms{X}$. 
\end{remark}

\subsection{Extended eigenvarieties for overconvergent 
cohomology}\label{sec:evars}
In this section we briefly recall the extended eigenvarieties constructed in 
\cite{jn}. We refer to \cite[\S 3.3, \S 4]{jn} for precise definitions and any 
undefined notation. Let $F$ be a number field and let $\mbf{H}/F$ be a 
connected reductive group which is split at all places above $p$. We set 
$\mbf{G}={\rm Res}_{\Q}^{F}\mbf{H}$. Choosing split models 
$\mbf{H}_{\oo_{F_{v}}}$ of $\mbf{H}$ over $\oo_{F_{v}}$ for all $v|p$ and  
maximal tori and Borel subgroups $\mbf{T}_{v}\sub \mbf{B}_{v}\sub 
\mbf{H}_{\oo_{F_{v}}}$, we obtain a model $\mbf{G}_{\Zp}=\prod_{v|p}{\rm 
Res}_{\Zp}^{\oo_{F_{v}}}\mbf{H}_{\oo_{F_{v}}}$ of $\mbf{G}$ over $\Zp$, and 
closed subgroup schemes $\mbf{T}=\prod_{v|p}{\rm 
Res}_{\Zp}^{\oo_{F_{v}}}\mbf{T}_{v}\sub \mbf{B}=\prod_{v|p}{\rm 
Res}_{\Zp}^{\oo_{F_{v}}}\mbf{B}_{v}\mbf{G}_{\Zp}$. Set $T_{0}=\mbf{T}(\Zp)$ and 
let $I$ be the preimage of $\mbf{B}(\Fp)$ under the map $\mbf{G}_{\Zp}(\Zp) \ra 
\mbf{G}_{\Zp}(\Fp)$. We define $\Sigma$ to be the kernel of a choice of 
splitting of the inclusion $T_{0}\sub \mbf{T}(\Qp)$. Inside $\Sigma$, we have a 
certain submonoid $\Sigma^{+}$ and a subset $\Sigma^{cpt}\sub \Sigma^{+}$. Fix 
compact open subgroups $K_{\ell}\sub \mbf{G}(\Q_{\ell})$ for all $\ell\neq p$ 
such that $K_{\ell}=\mc{G}(\Z_{\ell})$ for all but finitely many $\ell$, where 
$\mc{G}$ is a reductive model of $\mbf{G}$ over $\Z[1/M]$ for some $M\in 
\Z_{\geq 1}$. Set $K^{p}=\prod_{\ell\neq p}K_{\ell}$ and $K=K^{p}I$. Let 
$\mbf{Z}$ denote the center of $\mbf{G}$, put $Z(K)=\mbf{Z}(\Q)\cap K$ and let 
$\ol{Z(K)}\sub T_{0}$ be the $p$-adic closure. Finally let $K_{\infty}\sub 
\mbf{G}(\R)$ be a maximal compact and connected subgroup and let 
$Z_{\infty}^\circ\sub Z_\infty = \mbf{Z}(\R)$ be the identity component.

\medskip
If $R$ is a Banach--Tate $\Zp$-algebra \cite[\S 3.1]{jn} and $\kappa : 
T_{0}/\ol{Z(K)} \ra R^{\times}$ is a continuous character, then (under the 
assumption that the norm of $R$ is adapted to $\kappa$ \cite[Def. 3.3.2]{jn}) 
we defined distribution modules $\mc{D}_{\kappa}^{r}$ for $r<1$ that are close 
enough to $1$. $\mc{D}_{\kappa}^{r}$ has actions of $I$ and $\Sigma^{+}$, and 
elements of $\Sigma^{cpt}$ act as compact operators. $\mc{D}_{\kappa}^{r}$ may 
be considered as a local system on the locally symmetric space 
$X_{K}=\mbf{G}(\Q)\setminus 
G(\A)/KK_{\infty}Z_{\infty}^\circ$. A choice of triangulation of the 
Borel--Serre 
compactification of $X_{K}$ and choices of homotopies between the corresponding 
simplicial chain complex and the singular chain complex gives a complex 
$C^{\bu}(K,\mc{D}_{\kappa}^{r})$ that computes the cohomology 
$H^{\ast}(X_{K},\mc{D}_{\kappa}^{r})$. Let $\mb{T}_{\ell}$ be the spherical 
Hecke algebra with respect to $K_{\ell}$ for any $\ell\neq p$ such that 
$K_{\ell}=\mc{G}(\Z_{\ell})$ and set $\mb{T}=\bigotimes_{\ell}\mb{T}_{\ell}$. 
Choosing an element $t\in \Sigma^{cpt}$, one may define an eigenvariety datum
$$ (\ms{Z}, \ms{H}, \mb{T}, \psi) $$
when $R$ is a Tate ring formally of finite type over $\Zp$, using the method of 
\cite[\S 4]{jn}. Roughly speaking, the coherent sheaf $\ms{H}$ is constructed 
out of the finite slope part of $H^{\ast}(X_{K},\mc{D}_{\kappa}^{r})$ with 
respect to the Hecke operator $U_{t}=[KtK]$. $\ms{Z} \subset \mathbb{A}^1_R$ is 
the 
Fredholm hypersurface for the Hecke operator $U_{t}$ acting on  the complex
$C^{\bu}(K,\mc{D}_{\kappa}^{r})$ (this action depends on the choices of 
homotopies we made above, and in particular will only commute up to homotopy 
with $U_{t'}$ for a different choice of $t' \in \Sigma^{cpt}$). The 
homomorphism $\psi$ comes from the action 
of $\mb{T}$ 
on $H^{\ast}(X_{K},\mc{D}_{\kappa}^{r})$. The eigenvariety datum is independent 
of the choice of $r$ and the choice of norm on $R$, and is compatible with open 
immersions $\Spa(S) \ra \Spa(R)$. If we replace $\ms{Z}$ by $\mb{A}^1_R$, and 
$\ms{H}$ by its pushforward under the closed immersion $\ms{Z} \subset 
\mb{A}^1_R$ we obtain an eigenvariety datum (with the same associated 
eigenvariety) which is moreover independent of 
our chosen triangulation of the Borel--Serre compactification of $X_{K}$ and 
the choices of homotopies between the corresponding simplicial chain complex 
and the singular chain complex. However, the associated eigenvariety will in 
general depend on the choice of controlling operator $U_t$, since we have not 
incorporated any other Hecke operators at $p$ into the eigenvariety datum. We 
refer to section \ref{independence} for more discussion of this issue.

\medskip
Fix $R$ and $\ka$ as above and put $\ms{W}=\Spa(R)$. Let $w\in \Max(\ms{W})$ 
with residue field $k(w)$ and write $\ka_{w}$ for the induced character 
$T_{0}/\ol{Z(K)} \ra k(w)^{\times}$. Consider the eigenvariety datum 
$(\ms{Z}, \ms{H}, \mb{T}, \psi)$ and let $z=(w,\lambda)\in 
\Max(\ms{Z})$. 
The following proposition is a simple corollary 
of 
\cite[Corollary 4.2.3]{jn}, and will be used in the next section to interpolate 
some known cases of Langlands functoriality.

\begin{proposition}\label{prop:evals}
The systems of eigenvalues for $\mb{T}$ occurring in the fibre $\ms{H}(z)$ are 
the same as the systems of eigenvalues of $\mb{T}$ occurring in the generalized 
$\lambda^{-1}$-eigenspace of $U_{t}$ on $H^{\ast}(K,\mc{D}^{r}_{w})$ (for any 
allowed choice of $r$).
\end{proposition}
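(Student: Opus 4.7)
The plan is to deduce the statement from Corollary \ref{points} together with an explicit identification of the fibre $\ms{H}(z)$, as a module over $\mb{T}\otimes_{\Zp}k(z)$, with the generalized $\lambda^{-1}$-eigenspace of $U_{t}$ on $H^{\ast}(X_{K},\mc{D}^{r}_{w})$. Indeed, Corollary \ref{points} says precisely that the $\mb{T}$-eigensystems appearing in $\ms{H}(z)$ are the maximal ideals of $\mb{T}\otimes_{\Zp}k(z)$ above the kernel of $\mb{T}\otimes_{\Zp}k(z) \ra \End_{k(z)}(\ms{H}(z))$, so once the identification is in place there is nothing more to check.

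To produce the identification, I would first work locally. Choose an open affinoid $\Spa(A)\sub \ms{W}$ containing $w$ and an open affinoid $\Spa(B)\sub \ms{Z}$ containing $z$ small enough that the slope decomposition used in \cite[\S 4]{jn} to define $\ms{H}$ is valid over $\Spa(B)$. By the construction recalled above, $\ms{H}|_{\Spa(B)}$ is then the finitely generated $B$-module arising as the slope summand of $H^{\ast}(X_{K},\mc{D}^{r}_{\ka}|_{A})$ cut out by the factor of the characteristic series of $U_{t}$ corresponding to the open $\Spa(B)\sub \mb{A}^{1}_{A}$, equipped with its natural $\mb{T}$-action.

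The next step is to pass to the fibre at $w$. The base change compatibility built into \cite[Cor.~4.2.3]{jn} identifies the fibre of this slope summand at $w$, as a $\mb{T}$-module, with the corresponding slope summand of $H^{\ast}(X_{K},\mc{D}^{r}_{w})$. Since $\ms{Z}\sub \mb{A}^{1}_{\ms{W}}$ is the vanishing locus of the Fredholm characteristic series of $U_{t}$, the point $z=(w,\lambda)$ corresponds to a root of this series modulo $\m_{w}$, i.e.~to the $U_{t}$-eigenvalue $\lambda^{-1}$ on the fibre. Further localising at $z$ therefore extracts precisely the generalized $\lambda^{-1}$-eigenspace of $U_{t}$ on $H^{\ast}(X_{K},\mc{D}^{r}_{w})$, completing the identification.

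The only real obstacle — one of bookkeeping rather than of content — is to check that the slope decomposition commutes with the closed-immersion base change $w\hookrightarrow \Spa(A)$, especially at points $w$ of positive residue characteristic. This is precisely the compatibility built into \cite[Cor.~4.2.3]{jn} for pseudorigid weight spaces, which is why the proposition is advertised as a simple corollary of that result.
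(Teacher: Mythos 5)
Your plan is close in spirit to the paper's proof (both fix a slope datum $(U,h)$, use \cite[Corollary 4.2.3]{jn}, and then specialize from $w$ to $z$), but there is a genuine gap in the middle step. You assert that ``the base change compatibility built into \cite[Cor.~4.2.3]{jn} identifies the fibre of this slope summand at $w$, \emph{as a $\mb{T}$-module}, with the corresponding slope summand of $H^{\ast}(X_{K},\mc{D}^{r}_{w})$''. This is not what that corollary gives, and it is not true in general. The issue is that $\ms{H}(\ms{Z}_{U,h}) = H^{\ast}(K,\D_{U}^{r})_{\leq h}$ is computed from a complex $C^{\bu}(K,\D_{U}^{r})_{\leq h}$ of finite projective $\oo(U)$-modules, and passing to the fibre at $w$ does not commute with taking cohomology: there is a Tor spectral sequence, so $H^{\ast}(K,\D_{U}^{r})_{\leq h}\otimes_{\oo(U)}k(w)$ is in general only a subquotient of $H^{\ast}(K,\D_{w}^{r})_{\leq h}$, not isomorphic to it. In the pseudorigid setting $\oo(U)$ can have Krull dimension $>1$, so one cannot remove this obstruction by shrinking $U$.

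The reason the proposition nevertheless holds is that it concerns only \emph{systems of eigenvalues} (maximal ideals in the support), which is a strictly coarser invariant than the module. What \cite[Corollary 4.2.3]{jn} actually gives is the isomorphism of \emph{reduced} Hecke algebras $(\mb{T}_{U,h}\otimes_{\oo(U)}k(w))^{red} \cong \mb{T}_{w,h}^{red}$, and since maximal ideals are insensitive to nilpotents this already identifies the two sets of eigensystems at the level of the fibre over $w \in \ms{W}$. To then pass down to the fibre over $z \in \ms{Z}$, the paper applies Lemma~\ref{nilpotence} twice: once to identify eigensystems in $\ms{H}(z)$ with maximal ideals of $\mb{T}_{U,h}\otimes_{\oo(\ms{Z}_{U,h})}k(z)$, and once to replace $k(z)$ with the Artinian thickenings $\oo(\ms{Z}_{U,h})/\mf{m}^n$, for which, when $n\gg 1$, the tensor product $H^{\ast}(K,\D_{w}^{r})_{\leq h}\otimes_{\oo(\ms{Z}_{U,h})}\oo(\ms{Z}_{U,h})/\mf{m}^{n}$ \emph{is} the generalized $\lambda^{-1}$-eigenspace. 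Your proposal collapses these careful steps into an asserted module isomorphism that is not available; the fix is precisely to argue throughout only with nilreductions of Hecke algebras, as the paper does.
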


\begin{proof}
Choosing a slope datum $(U,h)$ for $\ms{Z}$ 
with $z\in \ms{Z}_{U,h}$ (see \cite[Def.~2.3.1]{jn} for the 
notion of a slope datum for a Fredholm hypersurface), then by construction 
$\ms{H}(\ms{Z}_{U,h})=H^{\ast}(K, \mc{D}_{U}^{r})_{\leq h}$ (and this is 
independent of $r$). Set
$$ \mb{T}_{w,h}={\rm Im}\left( \mb{T}\otimes_{\Zp}k(w) \ra \End_{k(w)}(H^{\ast}(K, \mc{D}_{w}^{r})_{\leq h}) \right); $$
$$ \mb{T}_{U,h}={\rm Im}\left( \mb{T}\otimes_{\Zp}\oo(U) \ra \End_{\oo(U)}(H^{\ast}(K, \mc{D}_{U}^{r})_{\leq h}) \right). $$
By \cite[Corollary 4.2.3]{jn} we have 
$(\mb{T}_{U,h}\otimes_{\oo(U)}k(w))^{red}\cong \mb{T}_{w,h}^{red}$, so the 
systems of eigenvalues for $\mb{T}$ occurring in the spaces 
$H^{\ast}(K,\D_{w}^{r})_{\leq h}$ and $H^{\ast}(K,\D_{U}^{r})_{\leq 
h}\otimes_{\oo(U)}k(w)$ agree. Let $\mf{m}\sub \oo(\ms{Z}_{U,h})$ be the 
maximal ideal corresponding to $z$. Now $ \ms{H}(z) = 
H^{\ast}(K,\D_{U}^{r})_{\leq h}\otimes_{\oo(\ms{Z}_{U,h})}k(z)$ and the system 
of eigenvalues of $\mb{T}$ occurring in $\ms{H}(z)$ are the maximal ideals of 
$\mb{T}_{U,h}\otimes_{\oo(Z_{U,h})}k(z)$, by Lemma \ref{nilpotence}. By the 
same 
lemma, these are the same as the systems of eigenvalues occurring in 
$\mb{T}_{U,h}\otimes_{\oo(Z_{U,h})}\oo(\ms{Z}_{U,h})/\mf{m}^{n}$, for any 
$n\geq 1$. By the above, these systems of eigenvalues are the same as those 
occurring in $H^{\ast}(K,\D_{w}^{r})_{\leq 
h}\otimes_{\oo(\ms{Z}_{U,h})}\oo(\ms{Z}_{U,h})/\mf{m}^{n}$, and for $n \gg 1$ 
this is the generalized $\lambda^{-1}$-eigenspace of $U_{t}$.
\end{proof}

To finish this subsection, we note that the second author's result 
\cite[Proposition B.1]{han} giving a lower bound for the dimensions of 
irreducible components of eigenvarieties (see also \cite[Prop.~5.7.4]{urb}) 
holds for 
extended eigenvarieties. We retain the notation from above, and write $\pi : 
\ms{X} \ra \ms{Z}$ for the eigenvariety attached to 
$(\ms{Z},\ms{H},\mb{T},\psi)$.

\begin{proposition}\label{prop:dimension}
Let $x\in \Max(\ms{X})$ and put $(w,\lambda)=\pi(x)$. Let $h \in \Q_{\ge 0}$ be 
such that $H^{*}(K,\D_{w}^{r})_{\le h, x}\neq 0$.  Set
$$ l(x)= \sup \left\{ i \mid H^{i}(K,\D_{w}^{r})_{\le h, x}\neq 0\right\} - 
\inf \left\{ i \mid H^{i}(K,\D_{w}^{r})_{\le h, x}\neq 0\right\}.$$ Note that 
$l(x)$ is independent of the choice of $h$. Assume that $\wh{\oo}_{\ms{W},w}$ 
is Cohen--Macaulay and, for simplicity, that $\ms{W}$ is equidimensional. Then 
the dimension of any irreducible component containing $x$ is greater than or 
equal to $\dim \ms{W} -l(x)$. 
\end{proposition}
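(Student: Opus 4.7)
The plan is to adapt the argument of \cite[Proposition B.1]{han} to the pseudorigid setting, combining the dimension theory developed above with a standard commutative algebra input.

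First I choose a slope datum $(U,h)$ for $\ms{Z}$ with $z=\pi(x)\in \ms{Z}_{U,h}$. Then $\ms{X}_{U,h}=\Spa(\ms{T}_U)$, where $\ms{T}_U$ is the Hecke algebra acting faithfully on $H^*(K,\D_U^r)_{\le h}$. By the construction in \cite{jn}, this cohomology is computed from a bounded complex $C^\bu=C^\bu(K,\D_U^r)_{\le h}$ of finite projective $\oo(U)$-modules carrying a homotopy action of $\ms{T}_U$. Set $A=\wh{\oo}_{\ms{W},w}$, which is Cohen--Macaulay of dimension $d=\dim\ms{W}$ by hypothesis and Theorem \ref{dim3}. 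Then $C^\bu_A:=C^\bu\otimes_{\oo(U)}A$ is a bounded complex of finite free $A$-modules, and the finite semilocal $A$-algebra $B:=\ms{T}_U\otimes_{\oo(U)}A$ splits as a product $\prod_{x'\in\pi^{-1}(w)}B_{x'}$ of complete local rings because $A$ is Henselian, with $B_x\cong\wh{\oo}_{\ms{X},x}$. The corresponding idempotent splits off a direct summand $C^\bu_x$ of $C^\bu_A$, itself a bounded complex of finite free $A$-modules, on whose cohomology $B_x$ acts faithfully.

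Next, by Nakayama's lemma and the universal coefficient theorem applied to the flat complex $C^\bu_x$, if $H^i(C^\bu_x)\neq 0$ then $H^i(C^\bu_x\otimes_A k(w))=H^i(K,\D_w^r)_{\le h,x}\neq 0$, so the range of degrees in which $C^\bu_x$ has nonzero cohomology has length at most $l(x)$. The key commutative algebra input, originally due to Urban and exploited in \cite{han}, is a consequence of the new intersection theorem of Peskine--Szpiro/Roberts: for a bounded complex $D^\bu$ of finite free modules over a Cohen--Macaulay Noetherian local ring $R$ whose cohomology is concentrated in a range of length $\le l$, every associated prime $\mf{q}$ of $H^*(D^\bu)$ as an $R$-module satisfies $\dim R/\mf{q}\ge \dim R-l$. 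Applied to $C^\bu_x$, this gives $\dim A/\mf{q}\ge d-l(x)$ for every $\mf{q}\in \operatorname{Ass}_A(H^*(C^\bu_x))$.

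To conclude, by Proposition \ref{dim4} each irreducible component of $\ms{X}$ through $x$ corresponds to a minimal prime $\mf{p}$ of $\wh{\oo}_{\ms{X},x}=B_x$, and its dimension is $\dim B_x/\mf{p}$. Faithfulness of the action of $B_x$ on $H^*(C^\bu_x)$ makes $\mf{p}$ a minimal prime of the support of $H^*(C^\bu_x)$ over $B_x$, hence an associated prime. By compatibility of associated primes with finite ring extensions, the contraction $\mf{q}:=\mf{p}\cap A$ is associated to $H^*(C^\bu_x)$ over $A$, so $\dim A/\mf{q}\ge d-l(x)$ by the previous paragraph. Finiteness of $B_x$ over $A$ forces $\dim B_x/\mf{p}=\dim A/\mf{q}$, yielding $\dim B_x/\mf{p}\ge \dim\ms{W}-l(x)$ as required.

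The main technical input is the commutative algebra lemma via the new intersection theorem; a subsidiary (but essential) point is the semilocal splitting $B=\prod B_{x'}$, which relies on $A$ being complete Noetherian local and hence Henselian. The remaining steps—cohomology range comparison, faithfulness of the Hecke action, and translation via Proposition \ref{dim4}—are essentially bookkeeping using tools already in place.
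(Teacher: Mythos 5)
Your proof is correct and follows essentially the same route as the paper's: reduce via Proposition~\ref{dim4} to a coheight bound for minimal primes of $\wh{\oo}_{\ms{X},x}$, realise $\wh{\oo}_{\ms{X},x}$-cohomology via a perfect complex over $\wh{\oo}_{\ms{W},w}$, and invoke the intersection-theorem bound on the codimension of the support (the paper cites this as \cite[Theorem 2.1.1(1)]{han2} after identifying $l(x)$ with the amplitude of $C^\bu_x$). Your explicit use of the Henselian product decomposition $B = \prod_{x'} B_{x'}$ to extract the direct summand $C^\bu_x$ spells out what the paper's phrase ``applying $-\otimes_{\oo(\ms{X}_{U,h})}\wh{\oo}_{\ms{X},x}$'' compresses; the only cosmetic difference is that the paper works directly with the amplitude (computed via the residue field) rather than first bounding the degree-range of $H^*(C^\bu_x)$ as you do, but your Nakayama/universal-coefficient step gives the same bound.
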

\begin{proof}
Choose a slope datum $(U,h)$ such that $\pi(x)\in \ms{Z}_{U,h}$ and set 
$\ms{X}_{U,h}=\pi^{-1}(\ms{Z}_{U,h})$. By Proposition \ref{dim4}, it suffices 
to show that any minimal prime of $\wh{\oo}_{\ms{X},x}$ has coheight $\geq \dim 
\ms{W} -l(x)$. From the construction of the eigenvariety datum, we get a 
complex $C^{\bu}(K,\D_{U}^{r})_{\leq h}$ of finite projective $\oo(U)$-modules 
that computes $\ms{H}(\ms{Z}_{U,h})=H^{\ast}(K,\D_{U}^{r})_{\leq h}$. Applying 
$-\otimes_{\oo(\ms{X}_{U,h})}\wh{\oo}_{\ms{X},x}$, we get a complex 
$C^{\bu}_{x}$ of finite projective $\wh{\oo}_{\ms{W},w}$-modules that computes 
$H_{x}=\ms{H}(\ms{Z}_{U,h})\otimes_{\oo(\ms{X}_{U,h})}\wh{\oo}_{\ms{X},x}$. 
Since $H_{x}$ is a finite faithful $\wh{\oo}_{\ms{X},x}$-module, it suffices to 
show that any minimal prime in the support of $H_{x}$ in $\wh{\oo}_{\ms{W},w}$ 
has height $\leq l(x)$. This follows from \cite[Theorem 2.1.1(1)]{han2}, upon 
noting that $l(x)$ is equal to the amplitude of $C^{\bu}_x$, in the notation of 
\emph{loc.~cit}.
\end{proof}

\subsection{Independence of the choice of controlling 
operator}\label{independence}
The eigenvariety construction of the previous subsection involve making  a 
choice of `controlling operator' $U_t$, depending on a choice of $t \in 
\Sigma^{cpt}$. In this subsection, we describe a variant construction which 
incorporates all Atkin--Lehner Hecke operators at $p$ into the eigenvariety 
construction. We then show that this construction gives an 
eigenvariety which 
is independent of the choice of $t \in \Sigma^{cpt}$.

We retain all the notations of the previous subsection\footnote{In particular, 
our weight space $\ms{W}$ is affinoid, but everything discussed in this section 
glues to handle the general situation.}, and begin by defining 
some additional notation. We have a commutative subalgebra of the 
Iwahori--Hecke algebra at $p$ \[\ms{A}_p^+ \subset \Zp[\mathbf{G}(\Qp)//I] \] 
generated by the characteristic functions $\mathbf{1}_{[IsI]}$ for all $s \in 
\Sigma^+$ (see \cite[Prop.~6.4.1]{bc}). 

Now choosing an element $t\in \Sigma^{cpt}$, one may define an eigenvariety 
datum
$$ (\ms{Z}^t, \ms{H}^t, \mb{T}\otimes_{\Zp}\ms{A}_p^+, \psi^t) $$ and 
associated 
eigenvariety $\ms{X}^{\ms{A}_p^+,t}$ with $\phi^t: 
\mb{T}\otimes_{\Zp}\ms{A}_p^+ 
\rightarrow \oo(\ms{X}^{\ms{A}_p^+,t})$ as in the previous section (we now 
explicitly record the dependence on $t$ in the notation), by 
incorporating 
the action of $\ms{A}_p^{+}$ on $H^{\ast}(X_{K},\mc{D}_{\kappa}^{r})$.

\begin{lemma}\label{invertible}
For all $s \in \Sigma^+$ the action of $[IsI]$ on $\ms{H}^t$ is invertible. 
Equivalently, $\phi^t([IsI])$ is a unit in $\oo(\ms{X}^{\ms{A}_p^+,t})$.
\end{lemma}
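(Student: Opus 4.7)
The plan is to verify that $\phi^t([IsI])\in\oo(\ms{X}^{\ms{A}_p^+,t})$ is a unit by working pointwise. Since the coherent $\oo_{\ms{X}^{\ms{A}_p^+,t}}$-module $\ms{H}^{\dagger,t}$ attached to the eigenvariety datum is faithful and the action of $[IsI]$ on it is multiplication by $\phi^t([IsI])$, invertibility of the action of $[IsI]$ on $\ms{H}^t=\pi_*\ms{H}^{\dagger,t}$ is equivalent to $\phi^t([IsI])$ being a unit in $\oo(\ms{X}^{\ms{A}_p^+,t})$; this takes care of the equivalence in the statement. The space $\ms{X}^{\ms{A}_p^+,t}$ is locally the adic spectrum of a Tate ring formally of finite type over $\ok$, which is Jacobson by Proposition \ref{ps2}(1), so $\phi^t([IsI])$ is a unit if and only if it has nonzero image in $k(x)$ for every $x\in \Max(\ms{X}^{\ms{A}_p^+,t})$.

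Fix such an $x$ with $\pi(x)=z=(w,\lambda)\in\Max(\ms{Z}^t)$. Combining Corollary \ref{points} with Proposition \ref{prop:evals}, $x$ corresponds to a system of Hecke eigenvalues appearing in the generalized $\lambda^{-1}$-eigenspace $V$ of $U_t=[ItI]$ on $H^*(K,\mc{D}_w^r)$, and the image of $\phi^t([IsI])$ in $k(x)$ is the eigenvalue of $[IsI]$ on this system. Because $\ms{Z}^t\subset \mathbb{A}^1_{\ms{W}}$ is cut out by a Fredholm series with constant term $1$, the value $\lambda$ is nonzero, so $U_t$ acts invertibly on the finite-dimensional $k(w)$-space $V$.

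The key algebraic input is the strong dominance property of $t\in\Sigma^{cpt}$: for any $s\in\Sigma^+$ there exists $N\geq 1$ with $t^N s^{-1}\in\Sigma^+$. Since the subalgebra $\ms{A}_p^+$ is commutative and $[IaI]\cdot[IbI]=[IabI]$ for $a,b\in\Sigma^+$ (\cite[Prop.~6.4.1]{bc}), one obtains the identity
\[ [IsI]\cdot [It^N s^{-1} I]\;=\;[It^NI]\;=\;U_t^N \]
in $\ms{A}_p^+$, and hence in $\End(V)$. As $U_t^N$ has eigenvalue $\lambda^{-N}\neq 0$ on $V$, the eigenvalue of $[IsI]$ on the system corresponding to $x$ must be nonzero, which completes the reduction.

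The main, though standard, obstacle is the strong dominance step. It reflects the fact that elements of $\Sigma^{cpt}$ strictly contract the unipotent radical of $I$, so $t^N N_0 t^{-N}\subset sN_0 s^{-1}$ for large $N$ and any $s\in\Sigma^+$, which translates into $t^Ns^{-1}\in\Sigma^+$; this is a routine structural check using the definitions recalled in section \ref{sec:evars} and \cite[\S 3.3]{jn}. Granted this, the lemma reduces immediately to Corollary \ref{points} together with Proposition \ref{prop:evals}.
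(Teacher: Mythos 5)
Your proof is correct and rests on exactly the same two ingredients as the paper's: that $U_t$ acts invertibly on $\ms{H}^t$ (because $\ms{H}^t$ is built from slope decompositions for $U_t$, equivalently $\lambda\neq 0$ on the Fredholm hypersurface), and that for any $s\in\Sigma^+$ one can find $k$ with $s^{-1}t^k\in\Sigma^+$, so that $[IsI]\cdot[Is^{-1}t^kI]=U_t^k$ in $\ms{A}_p^+$. The only difference is that you insert a pointwise detour (Jacobson + maximal points + Corollary~\ref{points} + Proposition~\ref{prop:evals}) which is unnecessary here: once the identity $\phi^t([IsI])\phi^t([Is^{-1}t^kI])=\phi^t(U_t)^k$ holds in $\oo(\ms{X}^{\ms{A}_p^+,t})$ and $\phi^t(U_t)$ is already known to be a unit, the element $\phi^t([Is^{-1}t^kI])\phi^t(U_t)^{-k}$ is a global inverse and no fibrewise check is needed.
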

\begin{proof}
	This follows from the two following claims: the action of $U_t = [ItI]$ on 
	$\ms{H}^t$ 
	is invertible and for any $s \in \Sigma^+$ there exists $k \ge 0$ and $s' 
	\in \Sigma^+$
	such that $ss' = t^k$. The first claim is an immediate consequence of the 
	construction of $\ms{H}^t$ using slope decompositions for $U_t$. For the 
	second claim we use the notation in 
	\cite[\S 3.3]{jn}: there is an 
	integer $r \ge 1$ such that $s^{-1}\overline{N}_r s \subset 
	\overline{N}_1$, and for $k$ sufficiently large we have $t^k\overline{N}_1 
	t^{-k} \subset \overline{N}_r$. This implies that $s^{-1}t^k \in 
	\Sigma^+$.\end{proof}

Now we denote by $\widehat{\Sigma}=\Hom(\Sigma,\mathbb{G}_{m,\ms{W}})$ the 
pseudorigid space 
over $\ms{W}$ representing $S \mapsto \Hom(\Sigma, S^\times)$ for affinoid 
pseudorigid $R$-algebras $S$. If we fix a $\Z$-basis for $\Sigma$ we get an 
isomorphism $\widehat{\Sigma} \cong 
\mathbb{G}_{m,\ms{W}}^{\rk_{\Z}(\Sigma)}$. 

For each $s \in \Sigma$ we write $s 
= 
s'(s'')^{-1}$ for $s',s'' \in \Sigma^+$ and obtain an element 
\[\phi([Is'I])\phi([Is''I])^{-1} \in  \oo(\ms{X}^{\ms{A}_p^+,t})^\times.\]

This defines a map $\pi^{\ms{A}_p^+, t}: \ms{X}^{\ms{A}_p^+,t} \rightarrow 
\widehat{\Sigma}$ which is finite because the finite map 
$\pi: 
\ms{X}^{\ms{A}_p^+,t} \rightarrow 
\ms{Z} \subset \mb{A}^1_{\ms{W}}$ is a composition of $\pi^{\ms{A}_p^+, t}$ and 
the separated map $\widehat{\Sigma} \rightarrow 
\mb{A}^1_{\ms{W}}$ given by evaluation at $t^{-1}$. Applying Proposition 
\ref{reconstruction}, we see that $\ms{X}^{\ms{A}_p^+,t}$ can be identified 
with the eigenvariety 
associated to the eigenvariety datum \[(\widehat{\Sigma}, 
\pi^{\ms{A}_p^+, 
t}_*\ms{H}^{t,\dagger},\mb{T}\otimes_{\Zp}\ms{A}_p^+,\psi^{\ms{A}_p^+, t} 
)\] where $\psi^{\ms{A}_p^+, t}$ is the composition of $\phi^t$ with the map 
$\oo(\ms{X}^{\ms{A}_p^+,t}) \ra \End_{\oo_{\widehat{\Sigma}}}(\pi^{\ms{A}_p^+, 
	t}_*\ms{H}^{t,\dagger})$.

We can now show that the nilreduction of $\ms{X}^{\ms{A}_p^+,t}$ is independent 
of the choice of $t \in \Sigma^{cpt}$. In particular, $U_{t'}^{-1}$ induces a 
finite map $(\ms{X}^{\ms{A}_p^+,t})^{red} \rightarrow \mb{A}^1_{\ms{W}}$ and we 
will use this consequence to show that $\ms{X}^{\ms{A}_p^+,t}$ itself is 
independent of the choice of $t$.

\begin{lemma}\label{nilredindependent}
Let $t, t' \in \Sigma^{cpt}$. There is a canonical 
$\widehat{\Sigma}$-isomorphism 
$$(\ms{X}^{\ms{A}_p^+,t})^{red} \cong (\ms{X}^{\ms{A}_p^+,t'})^{red}$$ which is 
compatible with the maps 
$\phi^t:\mb{T}\otimes_{\Zp}\ms{A}_p^+ 
\rightarrow \oo(\ms{X}^{\ms{A}_p^+,t})^{red}$ and 
$\phi^{t'}:\mb{T}\otimes_{\Zp}\ms{A}_p^+ 
\rightarrow \oo(\ms{X}^{\ms{A}_p^+,t'})^{red}$.
\end{lemma}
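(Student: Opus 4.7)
The plan is to apply the interpolation theorem (Theorem~\ref{optimalint}) twice, in symmetric directions, to produce morphisms $(\ms{X}^{\ms{A}_p^+,t})^{red} \to (\ms{X}^{\ms{A}_p^+,t'})^{red}$ and back, and then to conclude by uniqueness that these are mutually inverse. Concretely, we would take the eigenvariety data $\mf{O}_1 = (\widehat{\Sigma},\, \pi^{\ms{A}_p^+, t}_{\ast}\ms{H}^{t,\dagger},\, \mb{T}\otimes_{\Zp}\ms{A}_p^+,\, \psi^{\ms{A}_p^+,t})$ and $\mf{O}_2$ defined analogously using $t'$, together with $j = \mathrm{id}_{\widehat{\Sigma}}$ and $\sigma = \mathrm{id}$ on $\mb{T}\otimes_{\Zp}\ms{A}_p^+$. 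For $\ms{X}^{cl}$ we take $\Max(\ms{X}^{\ms{A}_p^+,t})$; since Tate rings formally of finite type over $\ok$ are Jacobson (Proposition~\ref{ps2}(1)), the Zariski closure of this set, equipped with its reduced structure, is precisely $(\ms{X}^{\ms{A}_p^+,t})^{red}$.

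The crux of the argument is verifying the eigensystem hypothesis of Theorem~\ref{optimalint}. Fix a point $x \in \Max(\ms{X}^{\ms{A}_p^+,t})$ lying over $\chi \in \Max(\widehat{\Sigma})$ and $w \in \Max(\ms{W})$, with associated Hecke eigensystem $\Psi : \mb{T}\otimes_{\Zp}\ms{A}_p^+ \to k(x)$. By the argument of Proposition~\ref{prop:evals}, suitably adapted to the $\ms{A}_p^+$-equipped eigenvariety datum, $\Psi$ is realised on a nonzero vector in the generalized $\chi$-eigenspace of $\ms{A}_p^+$ inside $H^{\ast}(K, \mc{D}_{w}^{r})$. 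Since $\Psi([It'I]) = \chi(t') \in k(x)^{\times}$ by Lemma~\ref{invertible}, the generalized $\Psi$-eigenspace also has finite slope with respect to $U_{t'}$, so it contributes a point of $\ms{X}^{\ms{A}_p^+,t'}$ carrying the same eigensystem $\Psi$. This point lies over $\chi$ because the map to $\widehat{\Sigma}$ only records the Atkin--Lehner eigenvalues. By Corollary~\ref{points}, this is exactly the statement that $\Psi$ appears in the fibre of $\pi^{\ms{A}_p^+, t'}_{\ast}\ms{H}^{t',\dagger}$ at $\chi$.

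Theorem~\ref{optimalint} then yields a canonical morphism $i: (\ms{X}^{\ms{A}_p^+,t})^{red} \to \ms{X}^{\ms{A}_p^+,t'}$ over $\widehat{\Sigma}$ compatible with $\phi^t$ and $\phi^{t'}$; since the source is reduced, $i$ factors uniquely through $(\ms{X}^{\ms{A}_p^+,t'})^{red}$. Exchanging the roles of $t$ and $t'$ produces $i': (\ms{X}^{\ms{A}_p^+,t'})^{red} \to (\ms{X}^{\ms{A}_p^+,t})^{red}$. The two compositions $i' \circ i$ and $i \circ i'$ are $\widehat{\Sigma}$-endomorphisms of $(\ms{X}^{\ms{A}_p^+,t})^{red}$ and $(\ms{X}^{\ms{A}_p^+,t'})^{red}$ respectively, compatible with the Hecke actions; the identity morphism shares these properties, so the uniqueness clause of Theorem~\ref{optimalint} (equivalently, Proposition~\ref{reconstruction}) forces each composition to equal the identity, giving the desired isomorphism.

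The main obstacle will be the eigensystem verification in the second paragraph. It requires separating cleanly the role of $\widehat{\Sigma}$, which records only the Atkin--Lehner eigenvalues and is common to both data, from the role of the controlling operator, which enters only through the slope condition used to select the finite-slope cohomology. The critical input is Lemma~\ref{invertible}: because every Atkin--Lehner operator acts as a unit on both $\ms{H}^t$ and $\ms{H}^{t'}$, the finite-slope conditions for $U_t$ and $U_{t'}$ pick out the same systems of eigenvalues in each generalized $\chi$-eigenspace of $\ms{A}_p^+$, so a system appearing on one eigenvariety automatically appears on the other above the same $\chi$.
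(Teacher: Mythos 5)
Your proof is correct and follows essentially the same strategy as the paper's: apply Theorem~\ref{optimalint} to the two eigenvariety data over $\widehat{\Sigma}$ with $j=\sigma=\mathrm{id}$ and $\ms{X}^{cl}=\Max$, and verify the eigensystem hypothesis. The paper compresses the verification to the one-line remark that ``a simple variant of Proposition~\ref{prop:evals} gives a canonical bijection between the maximal points''; you spell out that verification via Lemma~\ref{invertible} (invertibility of the Atkin--Lehner action making the finite-slope conditions for $U_t$ and $U_{t'}$ equivalent above a given $\chi\in\Max(\widehat{\Sigma})$), and you make explicit the final ``apply in both directions and use uniqueness'' step that the paper leaves implicit when it asserts an isomorphism rather than merely a morphism. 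Both are fine; yours is a legitimate, somewhat fuller write-up of the same argument.
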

\begin{proof}
	We apply Theorem \ref{optimalint} to the two eigenvariety data 
	\begin{align*}(\widehat{\Sigma}, 
	\pi^{\ms{A}_p^+, 
		t}_*\ms{H}^{t,\dagger},\mb{T}\otimes_{\Zp}\ms{A}_p^+,\psi^{\ms{A}_p^+, 
		t} 
	),~~ (\widehat{\Sigma}, 
	\pi^{\ms{A}_p^+, 
		t'}_*\ms{H}^{t',\dagger},\mb{T}\otimes_{\Zp}\ms{A}_p^+,\psi^{\ms{A}_p^+,
		 t'}),\end{align*} with $j$ and $\sigma$ (in the notation of Theorem 
		 \ref{optimalint}) the identity maps and the subset $\ms{X}^{cl}$ 
		 of `classical points' equal to all the maximal points. A simple 
		 variant of Proposition \ref{prop:evals} gives a canonical bijection 
		 between the maximal points of the two eigenvarieties being considered. 
\end{proof}

Finally, we show that the eigenvariety  $\ms{X}^{\ms{A}_p^+,t}$ (not just the 
nilreduction) is independent of the choice of $t \in \Sigma^{cpt}$. We will 
need a basic lemma about finite maps of pseudorigid spaces:
\begin{lemma}\label{finitered}
	Let $f: \ms{X} \rightarrow \ms{Y}$ be a map of pseudorigid spaces. Suppose 
	that the induced map $f^{red}: \ms{X}^{red} \rightarrow \ms{Y}$ is finite. 
	Then $f$ is finite.
\end{lemma}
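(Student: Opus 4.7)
The plan is to reduce to an affinoid target and induct on the nilpotency index of the nilradical sheaf. Since finiteness of a morphism is local on the target, we may assume $\ms{Y} = \Spa(A)$ is an affinoid pseudorigid space. Then $\ms{X}^{red} = \Spa(C)$ with $C$ finite over $A$, by Prop.~\ref{ps2}\eqref{ps2finite}. The underlying topological space of $\ms{X}$ coincides with that of $\ms{X}^{red}$ (the closed immersion $\ms{X}^{red}\hookrightarrow\ms{X}$ is a homeomorphism on underlying spaces), so $\ms{X}$ is quasicompact. Choosing a finite affinoid pseudorigid cover $\ms{X}=\bigcup\Spa(B_i)$ and using that each $B_i$ is Noetherian so that $\mathrm{nil}(B_i)$ is nilpotent, the coherent nilradical ideal $\mc{N}\subset\oo_{\ms{X}}$ satisfies $\mc{N}^n=0$ globally for some $n\ge 1$.

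We argue by induction on $n$. The case $n=1$ is trivial since then $\ms{X}=\ms{X}^{red}$. For $n\ge 2$, let $\ms{X}'\hookrightarrow\ms{X}$ be the closed subspace defined by $\mc{N}^{n-1}$. Its nilradical equals $\mc{N}/\mc{N}^{n-1}$, which has $(n-1)$-th power zero, and $(\ms{X}')^{red}=\ms{X}^{red}$ is finite over $\Spa(A)$; so the inductive hypothesis applied to $\ms{X}'\to\ms{Y}$ gives $\ms{X}'=\Spa(B')$ with $B'$ finite over $A$. Since $\mc{N}^{n-1}\cdot\mc{N}\subset\mc{N}^n=0$, the coherent ideal $\mc{N}^{n-1}$ is annihilated by $\mc{N}$, hence arises as $i_{\ast}\mc{M}$ for a coherent sheaf $\mc{M}$ on $\ms{X}^{red}$, where $i:\ms{X}^{red}\hookrightarrow\ms{X}$ is the closed immersion. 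The $C$-module $M:=\mc{M}(\ms{X}^{red})$ is finite, so finite over $A$. Tate acyclicity on the affinoid pseudorigid space $\ms{X}^{red}$ gives $H^1(\ms{X},i_{\ast}\mc{M})=H^1(\ms{X}^{red},\mc{M})=0$, so taking global sections of
\[
0\to i_{\ast}\mc{M}\to\oo_{\ms{X}}\to\oo_{\ms{X}'}\to 0
\]
yields a short exact sequence $0\to M\to B\to B'\to 0$ of $A$-modules, where $B:=\oo_{\ms{X}}(\ms{X})$. Hence $B$ is finite over $A$.

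It remains to identify $\ms{X}$ with $\Spa(B)$ over $\Spa(A)$. Since $B$ is finite over $A$, it is a Tate ring formally of finite type over $\oo_K$, and $(B,B^{\circ})$ is an affinoid pseudorigid ring by Prop.~\ref{ps2}\eqref{ps2finite}; the natural morphism $\ms{X}\to\Spa(B)$ over $\Spa(A)$ is then well-defined. The computation above (with $k=0$) shows that the kernel of $B\twoheadrightarrow C$ is $\mc{N}(\ms{X})=\mathrm{nil}(B)$, so the composition $\ms{X}^{red}\hookrightarrow\ms{X}\to\Spa(B)$ is the standard closed immersion $\Spa(C)=\Spa(B/\mathrm{nil}(B))\hookrightarrow\Spa(B)$; as both $\ms{X}^{red}\hookrightarrow\ms{X}$ and this closed immersion are homeomorphisms on underlying spaces, so is $\ms{X}\to\Spa(B)$. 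To upgrade this to an isomorphism of adic spaces we check on rational subsets $U\subset\Spa(B)$: using the same filtration by $\mc{N}^k$ and Tate acyclicity on $U\cap\ms{X}^{red}$, one sees inductively on $n$ that the natural map $\oo_{\Spa(B)}(U)\to\oo_{\ms{X}}(U)$ is an isomorphism.

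The main obstacle is the final identification $\ms{X}\cong\Spa(B)$: while the statement that $B$ is finite over $A$ falls out cleanly from the cohomological argument, verifying that the natural map $\ms{X}\to\Spa(B)$ is an isomorphism of adic spaces (not merely of topological spaces) is essentially a pseudorigid analogue of the statement that an infinitesimal thickening of an affinoid rigid space is affinoid. The bookkeeping on rational subsets should mirror the induction used for the global sections, but one must be careful that rational localizations of $B$ are compatible with the corresponding rational subsets of $\oo_{\ms{X}}$.
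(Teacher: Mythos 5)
Your approach is genuinely different from the paper's, and in fact aims to prove a stronger intermediate statement. The paper's proof is a short soft argument: maps of pseudorigid spaces are locally of finite type (Props.~\ref{ps2}(\ref{ps2finite}), \ref{ps5}), so Huber's \cite[Prop.~1.5.5]{hu3} applies, giving that finite is equivalent to quasi-finite and proper; since $f$ and $f^{red}$ have the same underlying topological spaces (and $\ms{X}^{red}\hookrightarrow\ms{X}$ stays a surjective closed immersion under base change), quasi-finiteness and properness transfer from $f^{red}$ to $f$, and one is done without ever exhibiting $\ms{X}$ as an explicit affinoid. You instead attempt to prove directly that a nilpotent thickening of an affinoid pseudorigid space is affinoid pseudorigid, by induction on the nilpotency index, using Tate acyclicity on $\ms{X}^{red}$ and the closed subspaces cut out by powers of the nilradical. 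The first part of your argument---that $B:=\oo_{\ms{X}}(\ms{X})$ is module-finite over $A$---is correct and clean.

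The gap is exactly where you flag it: passing from ``$B$ is a finite $A$-module'' to ``$\ms{X}\cong\Spa(B)$ as adic spaces'' is not a formality and is not established by your sketch. You need to (i) equip $B$ with a topology making it a complete Tate ring formally of finite type over $\oo_K$, and verify that this agrees with the natural inverse-limit topology on $\oo_{\ms{X}}(\ms{X})$ coming from an affinoid cover (this is the analogue of an open-mapping-theorem argument and is not automatic); (ii) check continuity of the restriction maps $B\ra\oo_{\ms{X}}(U')$ so that the morphism $\ms{X}\ra\Spa(B)$ is actually a morphism of adic spaces over $\Spa(A)$; and (iii) on each rational subset $U\sub\Spa(B)$, compare $\oo_{\Spa(B)}(U)$ with $\oo_{\ms{X}}(U')$ by matching the filtrations by powers of the nilradical on both sides---which requires knowing in advance that rational localization interacts well with the filtration, something that is very close to what you are trying to prove. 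All of this is plausibly surmountable (the affine-scheme analogue is standard), but it is substantially more work than the paper's one-paragraph argument, and as written it leaves the core of the claim unverified.
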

\begin{proof}
	Maps of pseudorigid spaces are necessarily locally of finite type (by 
	Proposition \ref{ps2}(\ref{ps2finite}) and Proposition \ref{ps5}). So we 
	may apply \cite[Prop.~1.5.5]{hu3} which says that (for locally of finite 
	type maps of analytic adic spaces) finite is equivalent to quasi-finite and 
	proper. Since $f$ and $f^{red}$ are the same on underlying topological 
	spaces, it is easy to see that the quasi-finiteness and properness of 
	$f^{red}$ implies these properties for $f$. 
\end{proof}

\begin{proposition}\label{prop: independence}
Let $t, t' \in \Sigma^{cpt}$. There is a canonical 
$\widehat{\Sigma}$-isomorphism 
$$\ms{X}^{\ms{A}_p^+,t} \cong \ms{X}^{\ms{A}_p^+,t'}$$ which is 
compatible with the maps 
$\phi^t:\mb{T}\otimes_{\Zp}\ms{A}_p^+ 
\rightarrow \oo(\ms{X}^{\ms{A}_p^+,t})$ and 
$\phi^{t'}:\mb{T}\otimes_{\Zp}\ms{A}_p^+ 
\rightarrow \oo(\ms{X}^{\ms{A}_p^+,t'})$.
\end{proposition}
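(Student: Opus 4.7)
The strategy is to realise both eigenvarieties as the same Zariski closed subspace of a common direct-sum eigenvariety over $\widehat{\Sigma}$, the essential new input being the finiteness foreshadowed in the paragraph preceding the statement. First I would combine Lemma \ref{nilredindependent} (which gives the canonical $\widehat{\Sigma}$-isomorphism $(\ms{X}^{\ms{A}_p^+,t})^{red}\cong(\ms{X}^{\ms{A}_p^+,t'})^{red}$) with the closed immersion into $\ms{X}^{\ms{A}_p^+,t'}$ followed by the finite map $\ms{X}^{\ms{A}_p^+,t'}\to\ms{Z}^{t'}\hookrightarrow\mb{A}^{1}_{\ms{W}}$ to realise $U_{t'}^{-1}$ as a finite map on the reduction of $\ms{X}^{\ms{A}_p^+,t}$; Lemma \ref{finitered} then lifts this to finiteness of $U_{t'}^{-1}:\ms{X}^{\ms{A}_p^+,t}\to\mb{A}^{1}_{\ms{W}}$, and symmetrically for $U_t^{-1}$ on $\ms{X}^{\ms{A}_p^+,t'}$.

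Next I would form the eigenvariety datum
\[\mf{O}^{\oplus}=\bigl(\widehat{\Sigma},\,M_t\oplus M_{t'},\,\mb{T}\otimes_{\Zp}\ms{A}_p^+,\,(\psi^{\ms{A}_p^+,t},\psi^{\ms{A}_p^+,t'})\bigr),\]
where $M_{\tau}:=\pi^{\ms{A}_p^+,\tau}_*\ms{H}^{\tau,\dagger}$, and let $\ms{X}^{\oplus}$ denote its associated eigenvariety. Arguing exactly as in the proof of Theorem \ref{optimalint}, both $\ms{X}^{\ms{A}_p^+,t}$ and $\ms{X}^{\ms{A}_p^+,t'}$ sit inside $\ms{X}^{\oplus}$ as Zariski closed subspaces. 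Locally over an affinoid open $V=\Spa(S)\subset\widehat{\Sigma}$, these are cut out respectively by $\Ann(M_t(V))$ and $\Ann(M_{t'}(V))$ inside the image $\ms{T}^{\oplus}_V\subset\End_S(M_t(V)\oplus M_{t'}(V))$ of $\mb{T}\otimes_{\Zp}\ms{A}_p^+\otimes_{\Zp}S$. Since these two closed subspaces share the same reduction by Lemma \ref{nilredindependent}, the proof reduces to showing that these two annihilator ideals of $\ms{T}^{\oplus}_V$ coincide for all such $V$.

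The main obstacle is thus a direct sheaf-theoretic comparison: to establish a canonical isomorphism $M_t\cong M_{t'}$ of $\oo_{\widehat{\Sigma}}$-modules intertwining the actions $\psi^{\ms{A}_p^+,t}$ and $\psi^{\ms{A}_p^+,t'}$ of $\mb{T}\otimes_{\Zp}\ms{A}_p^+$. The plan is to identify, locally over $V=\Spa(S)$, both $M_t(V)$ and $M_{t'}(V)$ with the same submodule of $H^{\ast}(X_K,\D^r_S)$: namely the joint generalised eigenspace for the commuting family of Atkin--Lehner operators $\{[IsI]\}_{s\in\Sigma^+}$ (acting invertibly by Lemma \ref{invertible}) at the characters $\lambda_s\in S^\times$ prescribed by $V\to\widehat{\Sigma}$. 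The core input is that the finite-slope projectors for the commuting compact operators $U_t$ and $U_{t'}$ acting on $C^{\bu}(K,\D^r_S)$ commute and, once the Atkin--Lehner eigenvalues are all pinned down by the base, both cut out the same joint fixed-eigenvalue summand of cohomology. With this identification $\Ann(M_t(V))=\Ann(M_{t'}(V))$ inside $\ms{T}^{\oplus}_V$, the two closed subspaces of $\ms{X}^{\oplus}$ coincide, and the compatibility with the Hecke maps $\phi^t$ and $\phi^{t'}$ is then immediate from the construction of $\ms{X}^{\oplus}$.
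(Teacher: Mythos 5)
Your setup through Lemma \ref{finitered} reproduces the paper's exactly, and the framework you propose next --- embedding both eigenvarieties as Zariski closed subspaces of a direct-sum eigenvariety $\ms{X}^\oplus$ over $\widehat{\Sigma}$, and reducing to showing $M_t \cong M_{t'}$ as $(\mb{T}\otimes_{\Zp}\ms{A}_p^+)$-equivariant $\oo_{\widehat{\Sigma}}$-modules --- is a valid reformulation. The paper's proof is the same in substance (it invokes Proposition \ref{reconstruction}, and proving that the module has a description symmetric in $t,t'$ amounts to the same comparison).

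The gap is in the final step. You propose to identify, over an affinoid open $V = \Spa(S) \subset \widehat{\Sigma}$, both $M_t(V)$ and $M_{t'}(V)$ with ``the joint generalised eigenspace for the Atkin--Lehner operators at the characters $\lambda_s \in S^\times$ prescribed by $V$.'' This notion does not make sense in families: for a compact operator on $H^*(X_K,\D^r_S)$ over an affinoid $S$, the only available decompositions are slope decompositions, which require an auxiliary slope bound $h$; there is no canonical ``generalised eigenspace at $\lambda$'' for a single $\lambda\in S^\times$. Moreover, for a generic affinoid $V \subset \widehat{\Sigma}$, its preimage in $\ms{X}^{\ms{A}_p^+,t}$ is not a slope subspace for $U_t$, so $M_t(V)$ has no tractable local description of the kind you are appealing to.

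The paper circumvents precisely this difficulty by not working directly over $\widehat{\Sigma}$ at this stage. It maps $\ms{X}^{\ms{A}_p^+,t}$ to $\mb{A}^2_{\ms{W}}$ via $(U_t^{-1},U_{t'}^{-1})$, observes that this map is finite and factors through the fibre product $\ms{Z}^t \times_{\ms{W}} \widetilde{\ms{Z}}^{t'}$ of the two spectral varieties, and covers that product by opens $\ms{Z}_{U,h,k}$ carrying simultaneous slope data for $U_t$ and $U_{t'}$. Over such an open, and using the commutativity of the two slope projectors (the key input that you do correctly identify), the module is $H^*(X_K,\D^r_U)_{U_t\le h}\cap H^*(X_K,\D^r_U)_{U_{t'}\le k}$, which is visibly symmetric in $t,t'$ and finishes the proof via Proposition \ref{reconstruction}. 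Your proposal has the right conceptual target but is missing the bookkeeping device --- the two-dimensional spectral variety with its cover by simultaneous slope data --- that makes the comparison precise over families rather than fibres.
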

\begin{proof}
First we note that Lemma \ref{invertible} implies that there is a map 
$\ms{X}^{\ms{A}_p^+,t} \overset{U_{t'}^{-1}}\rightarrow \mb{A}^1_{\ms{W}}$. By 
Lemma \ref{nilredindependent}, this induces a finite map 
$(\ms{X}^{\ms{A}_p^+,t})^{red} \rightarrow \mb{A}^1_{\ms{W}}$ which factors 
through the spectral variety $\ms{Z}^{t'}$.  It follows from Lemma 
\ref{finitered} that the map $U_{t'}^{-1}$ is finite and factors through a 
closed adic subspace $\widetilde{\ms{Z}}^{t'}$ of $\mb{A}^1_{\ms{W}}$ with the 
same 
underlying topological space as $\ms{Z}^{t'}$.

Now we consider the finite map $p : \ms{X}^{\ms{A}_p^+,t} \rightarrow 
\mb{A}_\ms{W}^2$ 
given by $(U_t^{-1},U_{t'}^{-1})$, which factors through the fibre product 
$\ms{Z}:=\ms{Z}^t\times_{\ms{W}}\widetilde{\ms{Z}}^{t'}$. 

By Proposition \ref{reconstruction}, it now suffices to show that the coherent 
sheaf $p_*\ms{H}^{t,\dagger}$ has a description which is symmetric in $t$ and 
$t'$. We can cover $\ms{Z}^t$ by affinoid opens $\ms{Z}^t_{U,h}$, with $(U,h)$ 
running over slope data (for the characteristic power series of $U_t$), and 
similarly for $\ms{Z}^{t'}$. Since $\ms{Z}^{t'} \rightarrow 
\widetilde{\ms{Z}}^{t'}$ is a nilpotent thickening this gives a cover of 
$\widetilde{\ms{Z}}^{t'}$ by affinoid opens $\widetilde{\ms{Z}}^{t'}_{V,k}$. We 
have $\widetilde{\ms{Z}}^{t'}_{V,k} \cong \Spa(\oo(V)[X]/I_{V,k})$ with 
$I_{V,k}$ an ideal of $\oo(V)[X]$ with $(Q'(X)^N) \subset I_{V,k} \subset 
(Q'(X))$ where $Q'(X)$ is the multiplicative polynomial in the slope 
factorisation of the 
characteristic power series of $U_{t'}$ corresponding to the slope datum 
$(V,k)$.

We obtain an open cover of $\ms{Z}$ by the fibre products
$\ms{Z}^t_{U,h}\times_{\ms{W}}\widetilde{\ms{Z}}^{t'}_{V,k}$ which are finite 
over $U\cap V \subset \ms{W}$. Since $\ms{W}$ is quasi-separated, we can cover 
$\ms{Z}$ by affinoid opens  $\ms{Z}_{U,h,k} = 
\ms{Z}^t_{U,h}\times_{\ms{W}}\widetilde{\ms{Z}}^{t'}_{U,k}$, finite over 
affinoid opens $U$ in $\ms{W}$ such that $(U,h)$ is a slope datum for $U_t$ and 
$(U,k)$ is a slope datum for $U_{t'}$. 

We claim that the restriction of $p_*\ms{H}^{t,\dagger}$ to 
$\ms{Z}_{U,h,k}$ is the coherent sheaf associated to 
\[H^{\ast}(X_{K},\mc{D}_{U}^{r})_{U_{t}\le h} \cap 
H^{\ast}(X_{K},\mc{D}_{U}^{r})_{U_{t'}\le k}\] where the subscripts denote 
taking the slope decompositions for $U_t$ and $U_{t'}$, which exist since 
$(U,h)$ and $(U,k)$ are slope data for the respective characteristic power 
series. In particular, we can take $\widetilde{\ms{Z}}^{t'} = \ms{Z}^{t'}$ and 
the symmetry between $t$ and $t'$ in our description of $p_*\ms{H}^{t,\dagger}$ 
implies the Proposition. So it remains to justify our claim.

By definition $\ms{H}^{t}|_{\ms{Z}^{t}_{U,h}}$ is the coherent sheaf associated 
to 
$H^{\ast}(X_{K},\mc{D}_{U}^{r})_{U_{t}\le h}$. Recall that we have 
multiplicative polynomials $Q$ and $Q'$ over $\oo(U)$ such that 
$H^{\ast}(X_{K},\mc{D}_{U}^{r})_{U_{t}\le h} = \ker(Q^*(U_t))$ 
and $Q^*(U_t)$ is an isomorphism on  $H^{\ast}(X_{K},\mc{D}_{U}^{r})_{U_{t}> 
h}$, and similarly for 
the slope $\le k$ decomposition with respect to $U_{t'}$. Since $U_t$ and 
$U_{t'}$ commute (hence $Q^*(U_t)$ and $Q^{\prime *}(U_{t'})$ also commute) on 
$H^{\ast}(X_{K},\mc{D}_{U}^{r})$ the idempotent projectors 
\[(Q^*(U_t)|_{\mathrm{Im} 
Q^*(U_t)})^{-1} \circ Q^*(U_t),~~ (Q^{\prime *}(U_{t'})|_{\mathrm{Im} 
Q^{\prime *}(U_{t'})})^{-1} \circ Q^{\prime *}(U_{t'})\]
giving their slope decompositions commute and we have a decomposition
\[H^{\ast}(X_{K},\mc{D}_{U}^{r})_{U_{t}\le h} = 
H^{\ast}(X_{K},\mc{D}_{U}^{r})_{U_{t}\le h}\cap 
H^{\ast}(X_{K},\mc{D}_{U}^{r})_{U_{t'}\le k} \oplus 
H^{\ast}(X_{K},\mc{D}_{U}^{r})_{U_{t}\le h}\cap 
H^{\ast}(X_{K},\mc{D}_{U}^{r})_{U_{t'}> k}.\]

The restriction of $p_*\ms{H}^{t,\dagger}$ to 
$\ms{Z}_{U,h,k}$ corresponds to the module 
\[H^{\ast}(X_{K},\mc{D}_{U}^{r})_{U_{t}\le 
h}\otimes_{\oo(U)[X]}\oo(U)[X]/I_{U,k}\] where $X$ acts by $U_{t'}^{-1}$. Since 
$Q(U_{t'}^{-1})$ is an isomorphism on 
$H^{\ast}(X_{K},\mc{D}_{U}^{r})_{U_{t'}> k}$ 
and $Q(U_{t'}^{-1})$ is zero on $H^{\ast}(X_{K},\mc{D}_{U}^{r})_{U_{t'}\le k}$, 
this tensor product can be identified with 
$H^{\ast}(X_{K},\mc{D}_{U}^{r})_{U_{t}\le 
h}\cap H^{\ast}(X_{K},\mc{D}_{U}^{r})_{U_{t'}\le k}$, as claimed.
\end{proof}

\section{Application}
We give a sample application of our interpolation theorem 
Thm.~\ref{optimalint}, interpolating cyclic base change and using 
Prop.~\ref{prop:dimension} to show that there exist large characteristic $p$ 
loci in the extended eigenvarieties for $\GL_2$ over number fields.

\subsection{Cyclic base change}

The following theorem is a consequence of \cite{arthurclozel}, see 
\cite[Lem.~5.1.1]{blgg}.
\begin{theorem}\label{ACBC}
	Suppose $L/K$ is a cyclic extension of number fields. Let $\pi$ be a 
	cuspidal automorphic representation of $\GL_n(\A_K)$, and suppose that $\pi 
	\ncong \pi \otimes (\chi\circ\mathrm{Art}_K\circ\det)$ for any character 
	$\chi$ of $\Gal(L/K)$. Then there is a cuspidal automorphic representation 
	$\Pi$ of $\GL_n(\A_L)$ such that for all places $w$ of $L$ lying over a 
	place $v$ of $K$ we have $\mathrm{rec}(\Pi_w) \cong 
	\mathrm{rec}(\pi_v)|_{W_{L_w}}$.
\end{theorem}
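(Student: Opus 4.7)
My plan is to deduce this from the cyclic base change theorem of Arthur--Clozel, following \cite[Lem.~5.1.1]{blgg}. First I would reduce to the case where $L/K$ has prime degree, since any cyclic extension is a tower of prime cyclic extensions. For the inductive step with $L/M/K$, $[M:K]$ prime and $L/M$ cyclic, I would apply the prime case to produce a cuspidal automorphic representation $\pi_M$ of $\GL_n(\A_M)$ lifting $\pi$, then verify that $\pi_M$ still satisfies the no-self-twist hypothesis with respect to $\Gal(L/M)$ so that the inductive hypothesis applies to $\pi_M$. The preservation of the hypothesis follows from a standard argument using the fact that a self-twist of $\pi_M$ by a character of $\Gal(L/M)$ would, together with the $\Gal(M/K)$-invariance of $\pi_M$, force $\pi$ to admit a self-twist by some character of $\Gal(L/K)$, contradicting our assumption.

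In the prime degree case $[L:K] = \ell$, the results of \cite{arthurclozel} on the twisted trace formula produce an automorphic representation $\Pi = \mathrm{BC}_{L/K}(\pi)$ of $\GL_n(\A_L)$ whose local components satisfy base change compatibility with respect to the local Langlands correspondence at all places; in particular $\mathrm{rec}(\Pi_w) \cong \mathrm{rec}(\pi_v)|_{W_{L_w}}$ for every pair $w \mid v$. Moreover, Arthur--Clozel's classification shows that $\Pi$ is either cuspidal or an isobaric sum of $\Gal(L/K)$-conjugates of a cuspidal representation of a proper Levi, with the latter occurring if and only if $\pi$ admits a self-twist $\pi \cong \pi \otimes (\chi\circ\mathrm{Art}_K\circ\det)$ for some nontrivial character $\chi$ of $\Gal(L/K)$ (the correspondence being via class field theory identifying $\Gal(L/K)^\vee$ with characters of $\A_K^\times/K^\times\Nm_{L/K}\A_L^\times$). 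Our hypothesis therefore ensures $\Pi$ is cuspidal.

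The genuine difficulty, were one to prove this from scratch, lies entirely in Arthur--Clozel's construction of the base change lift in the prime cyclic case via the twisted trace formula and the comparison of orbital integrals; granting that input, the reduction to prime degree and the verification of cuspidality are routine, and the local compatibility at ramified places is part of the output of their construction. Hence the proof reduces to invoking \cite{arthurclozel}.
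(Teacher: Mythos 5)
Your proposal is correct and takes essentially the same route as the paper: the paper gives no written proof at all, simply stating that the result is a consequence of Arthur--Clozel \cite{arthurclozel} and pointing to \cite[Lem.~5.1.1]{blgg} for the deduction, which is precisely the reduction to the prime cyclic case plus the cuspidality dichotomy in terms of self-twists that you outline. Your elaboration of the inductive step (extending a character of $\Gal(L/M)$ to $\Gal(L/K)$, which is automatic for cyclic groups, and using $\Gal(M/K)$-invariance of $\pi_M$) fills in details the paper leaves to the cited references but does not diverge from them.
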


\subsection{Examples of eigenvariety data}
\subsubsection{The extended Coleman--Mazur eigencurve}
With the notation of Section \ref{sec:evars} , we consider the case 
$\mbf{G}=\GL_{2/\Q}$. We fix an integer $N \ge 5$ which is prime to $p$. We let 
$\ms{W}$ denote the 
analytic adic weight space parametrising continuous characters $\kappa = 
(\kappa_1,\kappa_2)$ of \[T_0 = \begin{pmatrix}
\Zp^\times & 0 \\ 0 & \Zp^\times
\end{pmatrix}\] (see \cite[Defn.~4.1.1]{jn}) and let $\ms{W}_0 \subset \ms{W}$ 
denote the closed subspace 
where $\kappa_2 = 1$.

We let $K^p \subset \GL_2(\A_f^p)$ be the compact open subgroup given by \[ 
K^p = \{g \in \GL_2(\widehat{\Z}^p) \mid g \equiv \begin{pmatrix}
* & *\\ 0 & 1
\end{pmatrix}\mod N \}\] let $I\subset \GL_2(\Q_p)$ be the upper triangular 
Iwahori subgroup and 
set $K = K^pI \subset \GL_2(\A_f)$.

Let $S$ be a finite set of primes, containing all the primes dividing $pN$, and 
let 
\[\T^S_{\GL_{2/\Q}} = 
\otimes_{l \notin S}\T(\GL_2(\Q_l),\GL_2(\Z_l))\] be the product of spherical 
Hecke 
algebras over $\Zp$ for places away from $S$.

Let $f \in \Z_{\ge 1}$. Using the construction recalled in Section 
\ref{sec:evars}, and gluing over a pseudorigid affinoid open cover of 
$\ms{W}_0$, we 
obtain an eigenvariety datum 
\[(\ms{Z}^f_{\GL_{2/\Q}},\ms{H}_{\GL_{2/\Q}},\T^S_{\GL_{2/\Q}},\psi_{\GL_{2/\Q}})\]
 and associated eigenvariety $\ms{E}^{S,f}$ whose maximal points correspond to 
 systems 
 of 
Hecke eigenvalues for $\T^S$ and $U_p^f$, where $U_p = [K\begin{pmatrix}
1 & 0\\0 & p
\end{pmatrix}K]$, with non-zero $U_p^f$-eigenvalue, appearing in the 
overconvergent cohomology spaces $H^1(K,\mc{D}^r_\kappa)$. 

\begin{remark}
	For any $f \ge 1$ there is a natural finite map $\ms{E}^{S,1} \rightarrow 
	\ms{E}^{S,f}$.
\end{remark}

We now define some notation: if $\ms{X}$ is a pseudorigid space over $\oo_K$ we 
denote by $\ms{X}^{rig} \subset \ms{X}$ the Zariski open subspace of $\ms{X}$ 
given by the 
locus where $\pi_K \ne 0$. We have $\ms{X}^{rig} = \ms{X} 
\times_{\Spa(\oo_K,\oo_K)}\Spa(K,\oo_K)$ and $\ms{X}^{rig}$ is (the adic space 
associated to) a rigid space over $K$.

\begin{lemma}
	The subspace $(\ms{E}^{S,f})^{rig}$ is 
	Zariski dense in $\ms{E}^{S,f}$. In particular, 
	any Zariski 
	dense subset of $(\ms{E}^{S,f})^{rig}$ is Zariski dense in $\ms{E}^{S,f}$.
\end{lemma}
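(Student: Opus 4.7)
Since $(\ms{E}^{S,f})^{rig}$ is the Zariski open complement in $\ms{E}^{S,f}$ of the characteristic $p$ fibre $\ms{E}^{S,f}_{\Fp}:=\ms{E}^{S,f}\times_{\Spa(\Zp,\Zp)}\Spa(\Fp,\Fp)$, it suffices, using the decomposition of $\ms{E}^{S,f}$ into irreducible components (Proposition \ref{irr2}), to show that no irreducible component $C$ of $\ms{E}^{S,f}$ is contained in $\ms{E}^{S,f}_{\Fp}$. Granting this, $C\cap(\ms{E}^{S,f})^{rig}$ is a nonempty Zariski open subset of $C$, hence Zariski dense in $C$ by irreducibility, and taking the union over components yields the Zariski density of $(\ms{E}^{S,f})^{rig}$ in $\ms{E}^{S,f}$. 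The ``In particular'' clause then follows by transitivity: the Zariski closure in $\ms{E}^{S,f}$ of any Zariski dense subset of $(\ms{E}^{S,f})^{rig}$ contains $(\ms{E}^{S,f})^{rig}$, and therefore equals $\ms{E}^{S,f}$.

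The plan is to establish the non-containment by comparing dimensions. For the upper bound on $\dim\ms{E}^{S,f}_{\Fp}$, the finite map $\ms{E}^{S,f}\to\ms{Z}^f_{\GL_{2/\Q}}\subset\mb{A}^1_{\ms{W}_0}$ restricts to a finite map $\ms{E}^{S,f}_{\Fp}\to(\ms{Z}^f_{\GL_{2/\Q}})_{\Fp}$, whose target is the Fredholm hypersurface in $\mb{A}^1_{\ms{W}_{0,\Fp}}$ cut out by the mod-$p$ reduction of the characteristic power series of $U_p^f$. This reduction is a nontrivial entire series, since $U_p^f$ does not act nilpotently on mod $p$ overconvergent cohomology. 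As $\ms{W}_{0,\Fp}\cong\Spa(\Fp[[T]])^{an}$ is $1$-dimensional by Lemma \ref{dim1}, the ambient space $\mb{A}^1_{\ms{W}_{0,\Fp}}$ has dimension $2$, and a nontrivial Fredholm hypersurface inside it has dimension at most $1$; hence $\dim\ms{E}^{S,f}_{\Fp}\le 1$.

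For the lower bound, I would apply Proposition \ref{prop:dimension}, which gives $\dim C\ge\dim\ms{W}_0-l(x)=2-l(x)$ for any $x\in C$. The critical input specific to $\GL_{2/\Q}$ is that $l(x)=0$ for every $x\in\ms{E}^{S,f}$: the finite slope overconvergent cohomology $H^{\ast}(K,\mc{D}^r_\kappa)_{\le h}$ is concentrated in degree $1$. This concentration holds because the modular curve $X_K$ has singular cohomological dimension $1$ (it is homotopy equivalent to a graph), so $H^i(X_K,\mc{D}^r_\kappa)$ vanishes for $i\ge 2$, while $U_p$ acts on $H^0(X_K,\mc{D}^r_\kappa)$, a module of $K$-invariant distributions, with only infinite slope (an elementary computation with the coset decomposition for $U_p$). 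Consequently every irreducible component of $\ms{E}^{S,f}$ has dimension at least $2$, contradicting containment in $\ms{E}^{S,f}_{\Fp}$. The most delicate step here is the vanishing of the finite slope part of $H^0$; this is standard in the setting of modular curves but warrants an explicit verification.
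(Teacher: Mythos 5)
Your strategy is genuinely different from the paper's. The paper works locally on the affinoid pieces $\ms{E}^{S,f}_{U,h}$: it observes that each is finite over a Fredholm hypersurface over $U$, that its irreducible components surject onto components of $U$, and then invokes Chenevier's lemma (\cite[Lem.~6.2.8]{chegln}) together with the density of $U^{rig}$ in $U$ to pull density back. You instead argue globally: decompose $\ms{E}^{S,f}$ into irreducible components via Proposition \ref{irr2}, show by a dimension count that none can lie inside the Zariski closed characteristic $p$ locus, and conclude. The key inputs are the same in spirit (a lower bound on the dimension of components, which in your case comes via Proposition \ref{prop:dimension} and the concentration $l(x)=0$ of finite-slope overconvergent cohomology in degree $1$ for modular curves, and which requires also noting that $\ms{W}_0$ is Cohen--Macaulay and equidimensional), but the packaging is cleaner and avoids Chenevier's lemma. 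Your remark that the vanishing of the finite slope part of $H^0$ is the delicate point is well taken.

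However, your dimension bookkeeping is systematically off by one, and this is worth fixing because it reflects a conceptual slip about what the paper's notion of dimension gives for these mixed-characteristic spaces. With the definition $\dim X = \sup_{x\in\Max(X)}\dim\wh{\oo}_{X,x}$: the completed local ring of $\Spa(\Zp[[T]])^{an}$ at a maximal point (e.g.\ $T=0$ on the rigid locus) is $\Qp[[T]]$, so $\dim\ms{W}_0 = 1$, not $2$ (indeed the paper's own proof asserts components of $\ms{E}^{S,f}_{U,h}$ are $1$-dimensional, not $2$-dimensional). Likewise $\ms{W}_{0,\Fp}\cong\Spa(\Fp((T)))$ is $0$-dimensional (it is the adic spectrum of a complete discretely valued field), $\mb{A}^1_{\ms{W}_{0,\Fp}}$ is $1$-dimensional, and a nontrivial Fredholm hypersurface inside it has dimension $\le 0$. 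Your two errors cancel — so the conclusion $\dim C > \dim\ms{E}^{S,f}_{\Fp}$ still holds — but as written the bounds "$\dim C\ge 2$" and "$\dim\ms{E}^{S,f}_{\Fp}\le 1$" are the wrong ones: if one corrected only Proposition \ref{prop:dimension}'s bound to $\dim C\ge 1$ while keeping your "$\le 1$" upper bound, the argument would collapse. The correct statement is $\dim C\ge\dim\ms{W}_0-l(x)=1$ and $\dim\ms{E}^{S,f}_{\Fp}\le 0$, and then your contradiction goes through.
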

\begin{proof}
	Let $(U,h)$ be a slope datum for $\ms{E}^{S,f}$. We recall (see the 
	discussion after \cite[Lem.~6.1.3]{jn}) that 
	$\ms{E}^{S,f}$ has an open cover by affinoid pseudorigid subspaces 
	$\ms{E}^{S,f}_{U,h}$ 
	where 
	$(U,h)$ is a slope datum, and  $\ms{E}^{S,f}_{U,h}$ is finite 
	over a 
	Fredholm 
	hypersurface $\ms{Z}_{\GL_{2/\Q},U,h}$ for a Fredholm polynomial with 
	coefficients in	$\OO({U})$. Now ${U}^{rig}$ is Zariski 
	dense 
	in ${U}$, so it follows from \cite[Lem.~6.2.8]{chegln} (irreducible 
	components of $\ms{E}^{S,f}_{U,h}$ have dimension $1$, so they surject 
	onto 
	irreducible components of ${U}$ and this lemma applies) that its inverse 
	image 
	$(\ms{E}^{S,f}_{{U},h})^{rig}$ is Zariski dense in 
	$\ms{E}^{S,f}_{{U},h}$. Since 
	the $\ms{E}^{S,f}_{{U},h}$ cover $\ms{E}^{S,f}$ it follows that 
	$(\ms{E}^{S,f})^{rig}$ is 
	Zariski dense in $\ms{E}^{S,f}$.
\end{proof}

We now denote by $\ms{E}^{S,f}_{cusp}$ the Zariski closure of the classical 
cuspidal points of $(\ms{E}^{S,f})^{rig}$, with its induced reduced structure. 
The 
preceding lemma implies that 
$\ms{E}^{S,f}_{cusp}$ is equal to the Zariski closure of 
$(\ms{E}^{S,f})^{rig}_{cusp}$, the Zariski closure in $(\ms{E}^{S,f})^{rig}$ of 
the classical cuspidal 
points, which is a union of irreducible components of 
$(\ms{E}^{S,f})^{rig}$.  This furthermore implies that $\ms{E}^{S,f}_{cusp}$  
is a union of irreducible components of $\ms{E}^{S,f}$. Finally, if $F$ is a 
number field we denote by $\ms{E}^{S,f}_{cusp,F\mhyphen ncm}$ the 
Zariski closure of the classical 
cuspidal points of $(\ms{E}^{S,f})^{rig}$ which do not have CM by an imaginary 
quadratic subfield of $F$. This is again a union of irreducible components of 
$\ms{E}^{S,f}$ and contains all the non-ordinary components.
\subsubsection{Eigenvarieties for $\GL_{2/F}$}
We now allow $F$ to be an arbitrary number field and, with the notation of 
Section \ref{sec:evars}, we consider the case 
$\mbf{H}=\GL_{2/F}$. We let $K^p_F \subset 
\GL_2(\A_{F,f}^p)$ be the compact open subgroup given by 
\[ 
K^p_F = \{g \in \GL_2(\widehat{\OO}_F^p) \mid g \equiv \begin{pmatrix}
* & *\\ 0 & 1
\end{pmatrix}\mod N \}\] let $I_F \subset \prod_{v|p}\GL_2(F_v)$ be the upper 
triangular Iwahori subgroup and 
set $K_F = K^p_FI_F \subset \GL_2(\A_{F,f})$. We let $\ms{W}_F$ denote the 
analytic adic weight space parametrising continuous characters $\kappa$ of 
\[T_{F,0} = 
\begin{pmatrix}
\prod_{v|p}\OO_{F,v}^\times & 0 \\ 0 & \prod_{v|p}\OO_{F,v}^\times
\end{pmatrix}\] which are trivial on (the closure of) the image of $Z(K) = 
F^\times\cap K$ in $T_{F,0}$. 

The dimension of $\ms{W}_F$ is equal to the 
difference between $2[F:\Q]$ and the $\Z_p$-rank of the closure of 
$\OO_F^\times$ in $\OO^\times_{F,p}$. This latter rank is $r_1 + r_2 - 1 - 
\mathfrak{d}_{F,p}$, where $\mathfrak{d}_{F,p}$, by definition, measures the 
defect in Leopoldt's conjecture for $F$ and $p$ and $r_1, r_2$ denote the 
number of real and complex places of $F$.

Again we have an eigenvariety 
datum 
\[(\ms{Z}_{\GL_{2/F}},\ms{H}_{\GL_{2/F}},\T^S_{\GL_{2/F}},\psi_{\GL_{2/F}})\]
where \[\T^S_{\GL_2/F} = \otimes_{v \notin 
	S}\T(\GL_2(F_v),\GL_2(\OO_{F_v}))\] is a tensor product over finite places 
	$v$ 
which do not divide any of the primes in $S$. We denote the associated 
eigenvariety by $\ms{X}^{S}_F$ whose points correspond to systems 
of 
Hecke eigenvalues for $\T^S_{\GL_2/F}$ and $U_{p,F}=\prod_{v|p}U_v$, where $U_v 
= 
[K_F\begin{pmatrix}
1 & 0\\0 & \varpi_v
\end{pmatrix}K_F]$, with non-zero $U_{p,F}$-eigenvalue, appearing in the 
overconvergent cohomology spaces $H^*(K_F,\mc{D}^r_\kappa)$, defined using the 
locally symmetric 
spaces $X_{K_F}$ appearing in \ref{sec:evars}. Note that the dimension of 
$X_{K_F}$ (as a real manifold) is $2r_1 + 3r_2$.

\subsection{Base change}
For $F_v/\Q_l$ we have a map of spherical Hecke algebras 
$\T(\GL_2(F_v),\GL_2(\OO_{F_v})) \rightarrow 
\T(\GL_2(\Q_l),\GL_2(\Z_l))$ 
induced by unramified local Langlands and the map $\rho \mapsto 
\rho|_{W_{F_v}}$ on the Galois side. We make this map explicit.

We write $\T(\GL_2(\Q_l),\GL_2(\Z_l)) 
= \Zp[T^1_l,T^2_l]$, where $T^1_l$ and $T^2_l$ correspond to the double cosets 
of $\begin{pmatrix}
1 & 0 \\ 0 & l
\end{pmatrix}$ and $\begin{pmatrix}
l & 0 \\ 0 & l
\end{pmatrix}$ respectively. If $\lambda: 
\T(\GL_2(\Q_l),\GL_2(\Z_l)) \rightarrow \Qbar_p$ is a homomorphism 
we associate to it the semisimple unramified Weil group representation 
 where $\Frob_l$ (a geometric Frobenius element) has characteristic 
 polynomial $X^2 - 
\lambda(T^1_l)X + l\lambda(T^2_l)$. Likewise, if $\lambda_v: 
\T(\GL_2(F_v),\GL_2(\OO_{F_v})) \rightarrow \Qbar_p$ we associate 
the unramified representation where $\Frob_v$ has characteristic polynomial 
$X^2 - 
\lambda(T^1_v)X + q_v\lambda(T^2_v)$ where $q_v = l^{f_v}$ is the cardinality 
of the residue field of $F_v$ and $l$ is the 
rational prime under $v$. 

It follows that the map $\T(\GL_2(F_v),\GL_2(\OO_{F_v})) \rightarrow 
\T(\GL_2(\Q_l),\GL_2(\Z_l))$ takes $T^1_v$ to $\mathrm{tr}(\Frob_l^{f_v})$ 
(a homogeneous degree $f_v$ polynomial in $T^1_l$ and $lT^2_l$) 
and 
$T^2_v$ to $(T^2_l)^{f_v}$.

We thereby obtain the map of Hecke algebras $\sigma_F^S: \T^S_{\GL_2/F} 
\rightarrow 
\T^S_{\GL_2/\Q}$.

We denote by $j: \ms{W}_0 \rightarrow \ms{W}_F$ the map of weight spaces 
induced by the norm map $T_{F,0} \rightarrow T_0$.

\begin{theorem}\label{padicBC}
	Let $F/\Q$ be a cyclic extension of number fields. Let $g$ be 
	the number of 
	places of $F$ dividing $p$, let $e, f$ be the inertial and residual degrees 
	of a place dividing $p$. There is a canonical 
	finite morphism \[i: 
	\ms{E}^{S,1}_{cusp,F\mhyphen ncm} 
	\rightarrow \cX^S_{F}\] lying over $j: \mathscr{W}_0 \rightarrow 
	\mathscr{W}_F$ and compatible with the map 
	$\sigma_F^S:\T^S_{\GL_2/F} \rightarrow 
	\T^S_{\GL_2/\Q}$.
\end{theorem}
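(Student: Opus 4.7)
I would apply Theorem \ref{optimalint} with $\ms{X}_1 = \ms{E}^{S,1}$, $\ms{X}_2 = \cX^S_F$, and $\ms{X}^{cl} \subset \Max(\ms{E}^{S,1})$ the set of classical cuspidal points of $(\ms{E}^{S,1})^{rig}$ which are not CM by any imaginary quadratic subfield of $F$. By the definition of $\ms{E}^{S,1}_{cusp,F\mhyphen ncm}$ recalled in the previous subsection, the Zariski closure of $\ms{X}^{cl}$ in $\ms{E}^{S,1}$ is exactly $\ms{E}^{S,1}_{cusp,F\mhyphen ncm}$, so Theorem \ref{optimalint} will deliver a morphism with the required source.

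I would set up the eigenvariety data with base spaces $\ms{Z}_1 = \mathbb{A}^1_{\ms{W}_0}$ and $\ms{Z}_2 = \mathbb{A}^1_{\ms{W}_F}$ (pushing the coherent sheaves forward from their respective Fredholm hypersurfaces, as in Section \ref{sec:evars}), and define $j: \ms{Z}_1 \to \ms{Z}_2$ by $(w,\lambda)\mapsto (j_{\ms{W}}(w), \lambda^{gf})$. Here $j_{\ms{W}}: \ms{W}_0 \to \ms{W}_F$ is induced by the norm $T_{F,0} \to T_0$ and is well-defined because the condition $N \ge 5$ forces $\mathrm{Nm}(z) = 1$ for every $z \in Z(K_F)$. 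The power $gf = [F:\Q]/e$ encodes the matching of $U_{p,F}$- and $U_p$-eigenvalues under base change: if a $p$-stabilisation of $\pi$ has $U_p$-eigenvalue $\alpha$, then the corresponding $p$-stabilisation of its base change $\Pi$ has $U_{p,F}$-eigenvalue $\prod_{v\mid p}\alpha^{f_v} = \alpha^{gf}$. Both $j_{\ms{W}}$ and the fibre-wise power map on $\mathbb{A}^1$ are finite, so $j$ is finite.

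The heart of the argument is verifying the eigensystem hypothesis of Theorem \ref{optimalint}. Given $x \in \ms{X}^{cl}$ corresponding to a classical cuspidal automorphic representation $\pi$ of $\GL_2(\A_\Q)$ with the required non-CM property, Theorem \ref{ACBC} produces a cuspidal automorphic representation $\Pi$ of $\GL_2(\A_F)$ with $\mathrm{rec}(\Pi_w) \cong \mathrm{rec}(\pi_v)|_{W_{F_w}}$. The description of $\sigma_F^S$ in terms of unramified local Langlands and Frobenius restriction then shows directly that the $\T^S_{\GL_{2/F}}$-eigensystem of $\Pi$ coincides with the $\T^S_{\GL_{2/\Q}}$-eigensystem of $\pi$ composed with $\sigma_F^S$. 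The classical weight of $\Pi$ is $j_{\ms{W}}$ of the weight of $\pi$, and the $U_{p,F}$-eigenvalue of the chosen $p$-stabilisation is $\alpha^{gf}$ as above. Consequently the eigensystem of $\Pi$ appears in classical cuspidal cohomology $H^{\ast}_{cusp}(X_{K_F}, V_{j_{\ms{W}}(\kappa)})$, and invoking the Hecke-equivariant map from classical cohomology to overconvergent cohomology $H^{\ast}(K_F, \D^r_{j_{\ms{W}}(\kappa)})$ combined with Proposition \ref{prop:evals} places this eigensystem in $\ms{H}_{\GL_{2/F}}(j(\pi_1(x)))$, completing the verification.

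Theorem \ref{optimalint} then furnishes the canonical morphism $i$, and finiteness of $i$ follows from finiteness of $j$. The main technical obstacle I anticipate is the last step of the eigensystem verification: matching of Hecke eigenvalues away from $p$ is essentially formal from the definition of $\sigma_F^S$ and the base change theorem, but producing a nonzero contribution of $\Pi$ inside the specific generalised $U_{p,F}^{-1}$-eigenspace of overconvergent cohomology at weight $j_{\ms{W}}(\kappa)$ requires a Hecke-equivariant classical-to-overconvergent comparison for $\GL_2$ over a general number field, and careful attention to how the normalisations (of weights and of the $U_p$-coordinate on the spectral variety) line up with those used in the construction of $\cX^S_F$.
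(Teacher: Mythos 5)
Your overall strategy matches the paper's: apply Theorem \ref{optimalint} to the two eigenvariety data over $\mb{A}^1_{\ms{W}_0}$ and $\mb{A}^1_{\ms{W}_F}$, with $\sigma = \sigma_F^S$ and $\ms{X}^{cl}$ the non-$F$-CM classical cuspidal points, using Thm.~\ref{ACBC} to verify the eigensystem hypothesis. One genuine (and perfectly legitimate) variation: the paper first passes through the auxiliary eigenvariety $\ms{E}^{S,fg}$ (built using $U_p^{fg}$ as controlling operator) so that the $\mb{A}^1$-coordinates of $\ms{E}^{S,fg}$ and $\cX^S_F$ match directly, whereas you keep $\ms{E}^{S,1}$ and instead insert the finite power map $\lambda\mapsto\lambda^{gf}$ into $j$. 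Both encode the eigenvalue matching $\beta = \alpha^{fg}$ between the $U_p$- and $U_{p,F}$-eigenvalues; your version avoids introducing $\ms{E}^{S,fg}$, while the paper's first step reduces $j$ to the simplest possible form.

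There is, however, a real gap. Your $\ms{X}^{cl}$ consists of \emph{all} non-$F$-CM classical cuspidal points, but the eigensystem hypothesis requires showing that the base-changed system of Hecke eigenvalues, together with the $U_{p,F}$-eigenvalue $\alpha^{gf}$, actually appears in overconvergent cohomology $H^*(K_F,\D^r_{j_{\ms{W}}(\kappa)})$ at the correct point of the spectral variety. The only available mechanism for this is a small-slope classicality theorem for $\GL_{2/F}$; the paper imposes exactly the condition ``$U_p^{fg}$-slope $< \frac{k-1}{e}$'' on $\ms{X}^{cl}$ so that the $U_{p,F}$-eigenvalue of the base change satisfies the hypotheses of \cite[Thm.~3.2.5]{han} and \cite[Thm.~4.3.3]{han}, from which membership in $\ms{H}_{\GL_{2/F}}(j(\pi_1(x)))$ follows via Prop.~\ref{prop:evals}. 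You flag this as ``the main technical obstacle'' but leave it unresolved; without the slope restriction the verification step does not go through (a classical eigensystem need not lie in the image of the overconvergent-to-classical comparison). Once you cut $\ms{X}^{cl}$ down by this slope condition, you also need to re-check that its Zariski closure is still $\ms{E}^{S,1}_{cusp,F\mhyphen ncm}$; this holds because any affinoid piece of the eigencurve has a fixed slope bound $h$, so classical points of weight $k$ large enough automatically satisfy the required slope inequality, but it needs to be said.
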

\begin{proof}
	First we apply the finite map $\ms{E}^{S,1}_{cusp,F\mhyphen ncm} 
	\rightarrow \ms{E}^{S,fg}_{cusp,F\mhyphen ncm}$. So it suffices to 
	show that there is a canonical finite morphism \[i: 
	\ms{E}^{S,fg}_{cusp,F\mhyphen ncm} 
	\rightarrow \cX^S_{F}\] with the properties specified by the theorem. 
	The 
	subset $\mathscr{X}^{cl} \subset \ms{E}^{S,fg}$ is 
	defined to 
	be the points 
	arising from classical cusp forms of weight $k\ge 2$, level $K$ and 
	$U_p^{fg}$-slope $< \frac{k-1}{e}$ and which moreover do not have CM by an 
	imaginary quadratic subfield of $F$. Note that  $\mathscr{X}^{cl}$ is 
	Zariski 
	dense 
	in 
	$(\ms{E}^{S,fg}_{cusp,F\mhyphen ncm})^{rig}$, and hence it is Zariski dense 
	in 
	$\ms{E}^{S,fg}_{cusp,F\mhyphen ncm}$. 
	
	By Thm.~\ref{ACBC} (we excluded CM points so the condition of the theorem 
	is satisfied), for each point $x \in 
	\mathscr{X}^{cl}$ we have a 
	cuspidal
	automorphic representation $\pi_x$ of $\GL_2(\A_F)$ which is regular 
	algebraic 
	of 
	weight $(k-2,0)_{\tau: F \rightarrow \C}$ and whose Hecke eigenvalues are 
	giving by pulling back the Hecke eigenvalues for $x$ by the map 
	$\sigma_F^S$. 
	Moreover, for each $v|p$ one of the $U_v$-eigenvalues on the Iwahori 
	invariants 
	of 
	$\pi_{x,v}$ has $p$-adic valuation equal to $f_v$ times the slope of the 
	classical form giving rise to $x$, so there is a $U_{p,F}$-eigenvalue of 
	$\pi_{x,p}^{I_F}$ with $p$-adic valuation $<\frac{k-1}{e} = 
	v_p(\varpi_v^{k-1})$. Now we can apply \cite[Thm.~3.2.5]{han}, together 
	with 
	\cite[Thm.~4.3.3]{han} and Prop.~\ref{prop:evals} to show that the system 
	of Hecke eigenvalues arising from $\pi_x$ appears in 
	$\cX^S_{F}$. Finally, we conclude by applying Thm.~\ref{optimalint} to the 
	eigenvariety data \[\mf{O}_1 = 
	(\mb{A}^1_{\ms{W}_0},\ms{H}_{\GL_{2/\Q}},\T^S_{\GL_{2/\Q}},\psi_{\GL_{2/\Q}}),
	~~\mf{D}_2	 = 
	(\mb{A}^1_{\ms{W}_F},\ms{H}_{\GL_{2/F}},\T^S_{\GL_{2/F}},\psi_{\GL_{2/F}})\]
	 where the map $j: \mb{A}^1_{\ms{W}_0} \rightarrow \mb{A}^1_{\ms{W}_F}$ is 
	induced by $j: \mathscr{W}_0 \rightarrow 
	\mathscr{W}_F$ and we have already defined the map 
	$\sigma_F^S:\T^S_{\GL_2/F} \rightarrow 
	\T^S_{\GL_2/\Q}$.
\end{proof}

\begin{corollary}\label{cor:bigcharp}
	Let $F/\Q$ be a cyclic extension of number fields. Then  
	$(\cX^S_{F})^{rig} \subset \cX^S_{F}$ is a strict inclusion. 
	Moreover, the same is true if we restrict to the non-ordinary locus 
	(i.e.~where 
	the $U_{p,F}$-eigenvalue is not a unit), and the dimension of the Zariski 
	closed subset $\cX^S_{F}\backslash (\cX^S_{F})^{rig}$ (and 
	its non-ordinary locus) is at 
	least 
	$[F:\Q]$. Note that the Leopoldt conjecture is known for $F,p$, so 
	$\ms{W}_F$ 
	has dimension $1+[F:\Q]+r_2$.
\end{corollary}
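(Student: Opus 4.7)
The plan is to combine the existence of non-ordinary characteristic-$p$ points on the extended eigencurve (due to Bergdall--Pollack \cite{bp} and Liu--Wan--Xiao \cite{lwx}) with the base-change morphism of Theorem \ref{padicBC} and the dimension lower bound of Proposition \ref{prop:dimension}.

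First, by \cite{bp, lwx} there exist non-ordinary points $x \in \ms{E}^{S,1}$ with $\pi_K = 0$, obtained as specialisations of classical cuspidal points along irreducible components of $\ms{E}^{S,1}$; since the cyclic extension $F/\Q$ has at most one imaginary-quadratic subfield and the corresponding CM locus is a proper Zariski-closed subset, we may arrange $x \in \ms{E}^{S,1}_{cusp, F\mhyphen ncm}$. Set $y = i(x) \in \cX^S_F$ via Theorem \ref{padicBC}. Since $i$ lies over $j : \ms{W}_0 \to \ms{W}_F$, which sends the characteristic-$p$ locus into itself, $y$ is characteristic-$p$. Tracing $i$ on the $\mb{A}^1$-coordinates (which identifies $U_p^{-fg}$ at the source with $U_{p,F}^{-1}$ at the target) shows that non-invertibility of $U_p$ at $x$ transfers to non-invertibility of $U_{p,F}$ at $y$, so $y$ is non-ordinary. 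This already yields the strict inclusion $(\cX^S_F)^{rig} \subsetneq \cX^S_F$.

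Next, let $D$ be an irreducible component of $\cX^S_F$ through $y$. Since Leopoldt's conjecture holds for abelian $F/\Q$ (Ax--Brumer), $\ms{W}_F$ is equidimensional of dimension $1 + [F:\Q] + r_2$ with regular (hence Cohen--Macaulay) completed local rings. The critical input for Proposition \ref{prop:dimension} is the amplitude bound $l(y) \le r_2$. The Hecke eigensystem at $y$ is the characteristic-$p$ interpolation of the base change of a non-CM cuspidal $\GL_{2/\Q}$-eigensystem; the mod-$p$ Galois representation of $G_F$ thus attached is irreducible (cyclicity of $F/\Q$ together with the non-CM hypothesis at $x$ prevent the restriction from becoming reducible), which forces the localised overconvergent cohomology at $y$ to be concentrated in the cuspidal range $[r_1 + r_2, r_1 + 2r_2]$ for $\GL_{2/F}$. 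Establishing this concentration rigorously in characteristic $p$ -- by ruling out Eisenstein congruences at $y$ via Galois-theoretic arguments in the spirit of \cite{han} -- is the main technical obstacle. Granted the amplitude bound, Proposition \ref{prop:dimension} gives $\dim D \ge 1 + [F:\Q]$.

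Finally, since $D$ is equidimensional (Theorem \ref{dim3}) and contains the characteristic-$p$ point $y$, either $D \subset \{\pi_K = 0\}$ or $D \cap \{\pi_K = 0\}$ is of pure codimension one in $D$; in either case, we obtain an irreducible closed subset of $\cX^S_F \setminus (\cX^S_F)^{rig}$ of dimension $\ge [F:\Q]$, giving the characteristic-$p$ statement. For the non-ordinary refinement we apply Proposition \ref{prop:dimension} instead to the eigenvariety datum obtained by base-changing to the characteristic-$p$ weight space $\ms{W}_F \cap \{\pi_K = 0\}$ (of dimension $[F:\Q] + r_2$, with Cohen--Macaulay completed local rings): this produces an irreducible component $D'$ of the characteristic-$p$ eigenvariety through $y$ of dimension $\ge [F:\Q]$. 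That $D'$ is contained in the non-ordinary locus follows by propagating non-ordinarity from an entirely non-ordinary characteristic-$p$ irreducible component of the extended eigencurve on which $x$ may be chosen to lie (again using \cite{bp, lwx}) via the finite morphism $i$, together with the Zariski-closedness of the non-ordinary locus.
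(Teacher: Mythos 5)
Your argument assembles the right ingredients --- the Bergdall--Pollack characteristic-$p$ points, the base-change map $i$ of Theorem~\ref{padicBC}, and Proposition~\ref{prop:dimension} --- and your treatment of the strict inclusion and the final codimension-one argument are sound. But the central step has a genuine gap, which you yourself flag: to apply Proposition~\ref{prop:dimension} you try to bound the cohomological amplitude $l(y)$ at the \emph{characteristic-$p$ point} $y=i(x)$. There is no classicality result available at $y$, and establishing concentration of $H^*(K_F,\D^r_\kappa)_{\le h,\,y}$ in the cuspidal degree range would require a mod-$\pi_K$ vanishing theorem for overconvergent cohomology at a non-classical weight (ruling out both Eisenstein contributions and non-tempered ones). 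Nothing in the paper or in \cite{han} supplies this. Your proposed fix --- base-changing the eigenvariety datum to the characteristic-$p$ fibre of weight space --- does not remove the difficulty, since the resulting complexes are just the reductions mod $\pi_K$ of the original ones and the same concentration question at $y$ reappears.

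The paper avoids the problem entirely by evaluating $l$ at a \emph{different} point of the same irreducible component $D$ of $\cX^S_F$. The classical small-slope non-CM points used in the proof of Theorem~\ref{padicBC} are Zariski dense in the eigencurve component being mapped, so their images are Zariski dense in the image $i(\ms{E}^{S,1}_{cusp,F\mhyphen ncm})$, and hence $D$ contains a \emph{classical characteristic-$0$} point $x$ corresponding to a regular algebraic cuspidal automorphic representation $\pi$ of $\GL_2(\A_F)$ with small $U_{p,F}$-slope. At such a point the amplitude is directly computable: small slope gives classicality by \cite[Thm.~3.2.5]{han}, cuspidal classical cohomology of $X_{K_F}$ is concentrated in degrees $r_1+r_2,\dots,r_1+2r_2$ (\cite[3.6]{Har}), and the Jacquet--Shalika classification shows the generalised Hecke eigenspace away from $S$ is exhausted by this cuspidal contribution, so $l(x)=r_2$. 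Proposition~\ref{prop:dimension} then gives $\dim D\ge 1+[F:\Q]$, and the characteristic-$p$ locus of $D$ --- nonempty and proper, hence of codimension one by Krull --- has dimension $\ge[F:\Q]$. For the non-ordinary refinement one only needs that $D$ contains a non-ordinary point and that the ordinary locus is clopen (so $D$ is entirely non-ordinary); no fresh application of Proposition~\ref{prop:dimension} over the characteristic-$p$ weight fibre is needed. In short, the amplitude bound must be established at a classical characteristic-$0$ point and then transported to the characteristic-$p$ locus via the irreducible component, not proved directly in characteristic $p$.
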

\begin{proof}
	It follows from \cite[Cor.~A1]{bp} that there is a non-ordinary 
	irreducible component of $\ms{E}^{S,1}_{cusp}$ which has a characteristic 
	$p$ point. Since the component is non-ordinary, the non-CM points are 
	Zariski dense. Applying Theorem \ref{padicBC}, we deduce that there is an 
	irreducible component of $\cX^S_{F}$ which contains a characteristic 
	$p$ point. We also see (from the proof of Theorem \ref{padicBC}), that this 
	component contains a classical point $x$
	arising from a cuspidal
	automorphic 
	representation $\pi$ with a $U_{p,F}$-eigenvalue of 
	$\pi_{p}^{I_F}$ with $p$-adic valuation $< v_p(\varpi_v^{k-1})$. The 
	representation $\pi$ contributes to the cohomology of $X_{K_F}$ precisely 
	in degrees $r_1+r_2,\ldots,r_1+2r_2$ (see \cite[3.6]{Har}) --- moreover 
	this 
	contribution accounts for the entire generalised eigenspace for the 
	associated system of Hecke eigenvalues at places away from $S$, by (for 
	example) the 
	Jacquet--Shalika classification theorem \cite[Thm.~4.4]{JS}. It follows 
	from \cite[Thm.~3.2.5]{han} that we have $l(x)=r_2$ and Proposition 
	\ref{prop:dimension} therefore implies that the irreducible component under 
	consideration has 
	dimension $\ge 1 + [F:\Q]$ and so the 
	characteristic $p$ locus in this irreducible component has dimension $\ge 
	[F:\Q]$.
\end{proof}
\begin{remark}\label{rem: extension}
Using other known cases of Langlands functoriality it is possible to produce 
more examples of $p$-adic functoriality which show the existence of a large 
characteristic $p$ locus in extended eigenvarieties. For example, using 
solvable base change and Dieulefait's results on base change for $\GL_2$ 
\cite{Die,DieII}, one can extend the above corollary to eigenvarieties for 
$\GL_2$ over a solvable 
extension $F'$ of a totally real field $F$. One could also consider the 
symmetric square lifting and show the existence of large characteristic $p$ 
loci in extended eigenvarieties for $\GL_3/\Q$.

\medskip
The reader may also wonder, in view of \S \ref{independence}, whether Corollary 
\ref{cor:bigcharp} remains true if we had included $\ms{A}_{p}^{+}$ in the 
construction of the eigenvariety. This is true; it follows from the fact that, 
if we denote this eigenvariety by $\ms{X}_{F}^{S,\ms{A}_{p}^{+}}$, then by the 
construction of these eigenvarieties there is a canonical finite surjective map 
$\ms{X}_{F}^{S,\ms{A}_{p}^{+}} \ra \ms{X}_{F}^{S}$. Similar remarks apply if 
one adds or removes other Hecke operators (using Proposition \ref{prop: 
independence} if one wishes to change controlling operator). One can also prove 
a version of Theorem \ref{padicBC} using the eigenvarieties incorporating the 
full Atkin--Lehner algebra $\ms{A}_{p}^{+}$ --- the norm map between tori 
induces a map between the Atkin--Lehner Hecke algebras for $F$ and for $\Q$, 
which is easily seen to be compatible with base change functoriality for 
automorphic representations which are unramified principal series or unramified 
twist of Steinberg at $p$.
\end{remark}
\bibliographystyle{alpha}
\bibliography{halo}

\end{document}